\documentclass[12pt,a4paper]{article}

\usepackage{amsmath,amsthm,amssymb,a4wide}
\usepackage{times}
\usepackage{enumerate}

\usepackage{setspace}


\usepackage{dsfont}
\DeclareMathOperator{\Int}{Int}

\DeclareMathOperator{\tr}{tr}

\newcommand{\R}{\mathbb R}

\newcommand{\gep}{\varepsilon}
\newcommand{\RR}{\mathbb{R}}
\newcommand{\RRn}{\mathbb{R}^n}

\newcommand{\open}{(\Omega_t)_{t\in(0,T)}}

\newcommand{\omegaone}{(\Omega^1_t)_{t\in(0,T)}}
\newcommand{\omegatwo}{(\Omega^2_t)_{t\in(0,T)}}
\newcommand{\close}{(\mathcal{F}_t)_{t\in(0,T)}}
\newcommand{\reach}{\mathcal{R}_{\hat x,\hat t}}
\newcommand{\reachtwo}{\mathcal{R}_{\hat x,\hat t-h}}
\newcommand{\liminfstar}{{\liminf_*}_{\varepsilon\to0^+}\;}
\newcommand{\limsupstar}{{\limsup^*}_{\varepsilon\to0^+}\;}
\newcommand{\limsupstard}{{\limsup_{\varepsilon\to0^+}}^*\;}
\newcommand{\liminfstard}{\liminf_{\varepsilon\to0^+}\hspace{-0.08cm}{}_*\;}
\newcommand{\qued}{q^{\varepsilon,\delta}}
\newcommand{\omegaeb}{\omega^{\varepsilon,\beta}}
\providecommand{\norm}[1]{\lVert#1\rVert}
\providecommand{\abs}[1]{\lvert#1\rvert}
\providecommand{\bigabs}[1]{\Big\lvert#1\Big\rvert}

\theoremstyle{definition}
\newtheorem{thm}{Theorem}[section]
\newtheorem{cor}[thm]{Corollary}
\newtheorem{prop}[thm]{Proposition}
\newtheorem{lem}[thm]{Lemma}
\newtheorem{defin}[thm]{Definition}
\newtheorem{rem}[thm]{Remark}

\newcommand{\subjclass}[1]{\bigskip\noindent\emph{2010 Mathematics Subject Classification:}\enspace#1}

\numberwithin{equation}{section}

\begin{document}

\title{Singular limits of reaction diffusion equations and geometric flows with discontinuous velocity}
\author{Cecilia De Zan \& Pierpaolo Soravia\thanks{email: soravia@math.unipd.it.  }\\
Dipartimento di Matematica\\ Universit\`{a} di Padova, via Trieste 63, 35121 Padova, Italy}

\date{}
\maketitle

\begin{abstract}
We consider the singular limit of a bistable reaction diffusion equation in the case when the velocity of the traveling wave solution depends on the space variable and converges to a discontinuous function.
We show that the family of solutions converges to the stable equilibria off a front propagating with a discontinuous velocity. The convergence is global in time by applying the weak geometric flow uniquely defined through the theory of viscosity solutions and the level-set equation.
\end{abstract}

\subjclass{Primary 35D40; Secondary 35F21, 35F25, 49L20.}

\section{Introduction}

Many phenomena in physics, chemistry, biology etc. give rise to moving interfaces. In mathematics these are sometimes modeled by reaction diffusion equations whose solution, often an {\it order parameter}, is expected to approach for large times the equilibria of the system. When there is more than one equilibrium, interfaces separate regions where the parameter tends to the different equilibria, called phases for instance in phase transition models.
In this paper we want to study globally in time, as $\gep\downarrow0$, the asymptotic behavior of the following  reaction diffusion equation
\begin{equation}\label{asymptotic-probl}
\left\{
\begin{array}{lll}
(\mbox i)&u^{\varepsilon}_t(x,t)-\varepsilon\Delta u^\varepsilon(x,t)+\varepsilon^{-1}f^\varepsilon(u^\varepsilon,x)=0&\mbox{  in }\RRn\times(0,+\infty),\\
(\mbox{ii})&u^{\varepsilon}(x,0)=g(x)&\mbox{  in }\RRn,
\end{array}
\right.
\end{equation}
when $f^\varepsilon:\RR\times\RRn\longrightarrow\RR$ is of bistable type, with structure conditions modelled on the following main example
\begin{equation}\label{fepsilon def}
f^\varepsilon(q,x):=2\Big(q-\frac{c^\varepsilon(x)}{2}\Big)(q^2-1)
\end{equation}
with $-1<c^\gep(x)/2<1$. It is known in the literature and proved by Barles-Soner-Souganidis \cite{bss}, that if the bounded family of smooth functions $\{c^\gep\}_{\varepsilon>0}\subset C^{1,1}(\R^n)$, which are the velocities of the traveling wave solutions of (\ref{asymptotic-probl}), converges locally uniformly to some continuous function $\alpha:\R^n\to\R$ and the initial condition $g$ represents a sharp interface across the unstable equilibrium, then the asymptotics is governed by the following geometric Hamilton-Jacobi equation
\begin{equation}\label{eqhj}
\left\{\begin{array}{ll}
u_t(x,t)+\alpha(x)|Du(x,t)|=0,\quad \R^n\times(0,+\infty)\\
u(x,0)=u_o(x).
\end{array}\right.
\end{equation}
Here the initial condition $u_o\in C(\R^n)$ is chosen in such a way that the initial front $\Gamma_o=\{x\in\R^n:u_o(x)=0\}=\{x\in\R^n:g(x)=\frac{\alpha(x)}2\}$ and $\Gamma_o$ is a nonempty and closed set (ideally an hypersurface). Moreover $u_o(x)>0$ (resp. $u_o(x)<0$) if $g(x)>\frac{\alpha(x)}2$ (resp. $g(x)<\frac{\alpha(x)}2$). Indeed one proves that the convergence occurs locally uniformly off the moving front determined by (\ref{eqhj}) to the stable equilibria of the reaction diffusion equation, namely
\begin{equation*}u^\varepsilon(x,t)\to
\left\{\begin{array}{ll}
1,\quad &\hbox{if } u(x,t)>0,\\
-1,&\hbox{if } u(x,t)<0,
\end{array}
\right.\end{equation*}
where $u$ is the solution of (\ref{eqhj}).
We recall that, in order to solve (\ref{eqhj}) globally in time, solutions are meant as viscosity solutions, see Crandall-Ishii-Lions \cite{cil}.
It turns out that (\ref{eqhj}) has a unique continuous solution $u\in C(\R^n\times[0,+\infty))$ for any $u_o\in C(\R^n)$.
Such equation is called geometric since by homogeneity of the operator with respect to the first derivatives of $u$, one proves that if $u$ solves the pde in (\ref{eqhj})
and $\psi:\R\to\R$ is smooth and increasing, then also $\psi(u)$ solves the same equation. As a consequence, it is easy to see that if $u^1_o$ and $u^2_o$ are two initial conditions such that
$$\{x:u^1_o(x)=0\}=\{x:u^2_o(x)=0\},$$
and $u^1, u^2$ are the corresponding solutions in (\ref{eqhj}), then one has
\begin{equation*}
{\{x:u^1(x,t)=0\}=\Gamma_t=\{x:u^2(x,t)=0\}},\quad \mbox{for all }t>0.
\end{equation*}
One can therefore {\it define} the family of closed sets $(\Gamma_t)_t$ to be the geometric flow of the front or interface $\Gamma_o$ with {normal velocity $-\alpha$}.

In a previous paper \cite{dzs1}, we proved that the problem (\ref{eqhj}) is well posed, and a comparison principle holds in the sense of viscosity solutions as defined by Ishii \cite{is} (that we recall below) also when $\alpha$ has constant sign and it is piecewise continuous across an hypersurface, see also Camilli \cite{ca}. In the present paper we will apply these results to (\ref{asymptotic-probl}) allowing the sequence $c^\varepsilon$ to only converge off an hypersurface.
The novelty of our study is that in our case the norms of the gradients $\|Dc^\varepsilon\|_\infty$, $\|D^2c^\varepsilon\|_\infty$ may blow up as $\varepsilon\to0$, see (\ref{fproperty2}), (\ref{eqasscgep}) below.
Nonetheless we can still determine the asymptotic behavior of (\ref{asymptotic-probl}) for a general initial condition. We will show that the family $u^\varepsilon$ converges
to the stable equilibria of (\ref{asymptotic-probl})
off the evolving interface which moves with normal velocity $-\alpha$, now discontinuous in space, and it is determined by the geometric equation (\ref{eqhj}), once we initialize it by setting, in the case (\ref{fepsilon def}),
\begin{equation*}
\Gamma_o=\{x\in\R^n:u_o(x)=0\}=\{x\in\R^n:\frac{\alpha_*(x)}2\leq g(x)\leq \frac{\alpha^*(x)}2\},
\end{equation*}
where $\alpha_*,\alpha^*$ indicate the lower and upper semicontinuous envelopes of $\alpha$, respectively.
We notice that $\Gamma_o$ may contain relatively open subsets of the hypersurface of discontinuity of $\alpha$ where $\frac{\alpha_*(x)}2< g(x)<\frac{\alpha^*(x)}2$.
In geometric optics, discontinuous coefficients $\alpha$ in the propagation equation (\ref{eqhj}) arise in the refraction phenomenon and $1/\alpha$ is then the discontinuous refraction index. This makes our study interesting for the applications.

In order to prove the convergence of the solutions of (\ref{asymptotic-probl}), we apply the general geometric approach in Barles-Souganidis \cite{bs} to study singular limits giving rise to moving interfaces.
Their approach has already been used to describe geometric flows also in KPP-type systems, equations with oscillating coefficients, nonlocal terms or appearing in the study of interacting particle systems, see also Souganidis \cite{soug}. We show that it can be adapted also in our case.
The approach in \cite{bs} is based on an equivalent definition of weak geometric flow through the local comparison with smooth evolutions, as we recall below. This fact allows to apply more directly the formal arguments, where the smoothness of the interface and of the solution of the geometric equation is assumed, in order to derive the asymptotics.
In our discussion, we are going to follow the approach of \cite{bs}, as revisited by Barles-Da Lio \cite{bdl}, where they study problems in bounded domains with a Neumann boundary condition. We will often adapt to our problem a combination of the arguments of these two papers. To implement a general initial condition, we also need to follow some ideas of Chen \cite{xce} in order to show that an interface initializes in short time. We recall here also the work by Da Lio, Kim, Slepčev \cite{dlks}, where they study the asymptotics of a reaction diffusion equation with a nonlocal term, with a scaling different than ours, in a bounded domain with a nonlinear oblique derivative boundary condition. As we mentioned, the general approach in \cite{bs,bdl} does not apply directly in our case, and to cope with the discontinuous velocity of the front we also need to use an equivalent definition of solution of (\ref{eqhj}) by using one sided continuous approximations of the velocity, an idea already used in \cite{dlks}. We will also show that, when $\alpha$ in (\ref{eqhj}) has a sign and the initial front has empty interior, then the no interior condition persists for all times, thus avoiding a possible unpleasant feature of the weak evolution.

We can also consider a different scaling in the reaction diffusion equation, namely
\begin{equation}\label{asymptotic-probl-two}
\left\{
\begin{array}{lll}
(\mbox {iii})&u^{\varepsilon}_t(x,t)-\Delta u^\varepsilon(x,t)+\varepsilon^{-2}f^\varepsilon(u^\varepsilon,x)=0&\mbox{  in }\RRn\times(0,+\infty),\\
(\mbox{iv})&u^{\varepsilon}(x,0)=g(x)&\mbox{  in }\RRn.
\end{array}
\right.
\end{equation}
rather than (\ref{asymptotic-probl}). In this case, if ${c^\gep}/\gep\to\alpha$, with $\alpha$ piecewise continuous across an hypersurface, and we can prove that equation (\ref{asymptotic-probl-two}) as $\gep\to0$ gives rise to an interface moving with normal velocity ${\cal K}-\alpha$, where $\cal K$ indicates the mean curvature of the interface. Thus the front moves according to the geometric equation
\begin{equation}\label{eqmc}
\left\{\begin{array}{ll}
u_t(x,t)+F(Du(x,t),D^2u(x,t))+\alpha(x)|Du(x,t)|=0,\quad(x,t)\in\RRn\times(0,+\infty),\\
u(x,0)=u_o(x),
\end{array}\right.
\end{equation}
where $F:\RRn\times\mathcal{S}^n\to\RR$ is defined as
\begin{equation}\label{mean curvature F}
F(p,X)=-\tr\Big[\Big(I-\frac{p}{\abs{p}}\otimes\frac{p}{\abs{p}}\Big)X\Big].
\end{equation}
We can prove the convergence of the family $(u_\varepsilon)_{\varepsilon>0}$ also in this case, provided (\ref{eqmc}) satisfies a comparison principle. At the present time, as far as we know, a general comparison principle for (\ref{eqmc}) when $\alpha$ is piecewise continuous does not yet appear in the literature. We proved however a positive result in bounded domains in \cite{dzs3}.

We finally recall that the so called level set method for geometric flows was proposed by Osher-Sethian \cite{os} for numerical computations of geometric flows. Equations (\ref{eqhj}), (\ref{eqmc}) are main examples of their theory.
The rigorous theory of weak front evolution started with the work by Evans-Spruck \cite{es} for the mean curvature flow and by Chen-Giga-Goto \cite{cgg} for more general geometric flows.
For the mathematical analysis of the level set method via viscosity solutions, the reader is referred to the book by Giga \cite{gi}, where the approach is discussed in detail.
Among others, one of the most striking applications of the theory of weak front propagation is the fact that it allows to rigorously determine the asymptotics of reaction diffusion equations and sytems which model phase transitions. In this regard equation (\ref{asymptotic-probl-two}) (with $x-$independent nonlinearity $f$) was proposed by Allen-Cahn \cite{ac} as a phase transition model for a moving interface with normal velocity being the mean curvature of the front. The first study of the Allen-Cahn equation with a formal asymptotics is by Keller-Rubinstein-Sternberg \cite{krs} and the first rigorous and global in time proof of the asymptotics is due to Evans-Soner-Souganidis \cite{ess}. 
An application of the level set method to study the asymptotics of a reaction diffusion system appears in Soravia-Souganidis \cite{ss}.

As a general notation, in the paper we denote by $B(x,r),B(x,r]$ the open and closed balls in $\R^n$ with center $x$ and radius $r\geq0$, respectively.

\section{Definitions and basic properties}

In this section we consider a measurable function $\alpha:\RRn\to[\rho,+\infty)$, $\rho>0$, which is bounded and piecewise continuous across a given oriented, closed, Lipschitz hypersurface
$\tilde\Gamma\subset\RRn$ as follows. We are given
two bounded and locally Lipschitz continuous functions $n_1,n_2:\RRn\longrightarrow[\rho,+\infty)$ such that $n_1(x)< n_2(x)$, for all $x\in\RRn$.
If we denote with $\tilde d$ a signed distance function from $\tilde\Gamma$, then we consider $\alpha$ such that
\begin{equation}\label{alpha def}
\alpha(x)\in\left\{
\begin{array}{ll}
\{n_1(x)\}&\mbox{if }\tilde d(x)<0,\\
\{n_2(x)\}&\mbox{if }\tilde d(x)>0,\\
\left[n_1(x),n_2(x)\right]&\mbox{if }\tilde d(x)=0.
\end{array}
\right.
\end{equation}

We first briefly recall the basic ideas and results of the level-set approach, for the details see \cite{bss,soug,gi} and the references therein.

Let $\mathcal{E}$ be the collection of all the triples $(\Gamma_o, D^+_o, D^-_o)$ of mutually disjoint subsets of $\RRn$ such that $\Gamma_o$ is closed, $D^{\pm}_o$ are open and $\RRn=\Gamma_o\cup D^+_o\cup D^-_o$. We choose a function $u_o\in C(\RRn)$ such that
$$D_o^+=\{x\in\RRn:u_o(x)>0\},\quad D_o^-=\{x\in\RRn:u_o(x)<0\},\quad \Gamma_o=\{x\in\RRn:u_o(x)=0\}.$$
Given $\alpha$ as above, in order to define the weak motion or geometric flow of $(\Gamma_o, D^+_o, D^-_o)$ by normal velocity $-\alpha$ we start by considering the viscosity solution $u\in {C}(\RRn\times[0,+\infty))$ of the Cauchy problem (\ref{eqhj}).
All of what we are stating below in this section would also hold true for the other interesting geometric equation (\ref{eqmc}), in the case of a geometric flow with normal velocity ${\cal K}-\alpha$, provided it satisfies a comparison principle. This problem is not completely solved in the literature although we solve it in bounded domains in \cite{dzs3}.

We recall that, following Ishii \cite{is}, a locally bounded viscosity solution $u:\R^n\times(0,+\infty)\to\R$ of the pde in (\ref{eqhj}) is defined by checking the {two differential inequalities}
\begin{eqnarray*}
u_t(x,t)+{\alpha_*(x)}|Du(x,t)|\leq 0,\\
u_t(x,t)+{\alpha^*(x)}|Du(x,t)|\geq 0,
\end{eqnarray*}
in the viscosity sense, see \cite{cil}.
Here $\alpha^*(\hat x)=\lim_{r\to0^+}\sup_{B(\hat x,r)}\alpha(x)$ is the upper semicontinuous envelope, and the lower semicontinuous envelope $\alpha_*$ is defined accordingly. For instance, whenever $\varphi\in C^1(\R^n\times(0,+\infty))$ and $u^*-\varphi$ has a local maximum point at $(x_o,t_o)$, then
\begin{equation*}
\varphi_t(x_o,t_o)+{\alpha_*(x_o)}|D\varphi(x_o,t_o)|\leq 0.
\end{equation*}
A locally bounded function $u:\R^n\times[0,+\infty)$ will be a (discontinuous) solution of (\ref{eqhj}) if it is moreover continuous at the points of $\{(x,0):x\in\R^n\}$ and $u(x,0)=u_o(x)$.
It is known that, for every $u_o\in C(\RRn)$ there exists a unique solution $u\in C(\R^n\times[0,+\infty))$ of (\ref{eqhj}). For this fact the reader can consult the standard theory in \cite{cil} when $\alpha$ is continuous, or \cite{ca,dzs1} and the references therein for a discontinuous $\alpha$.
If for $t>0$, we define the triple
$$D_t^+:=\{x\in\RRn:u(x,t)>0\},\quad D_t^-:=\{x\in\RRn:u(x,t)<0\},\quad \Gamma_t:=\{x\in\RRn:u(x,t)=0\},$$
we have that $(\Gamma_t,D_t^+,D_t^-)\in\mathcal{E}$ for all $t\geq0$, and, since the equation in (\ref{eqhj}-i) is geometric as recalled in the introduction, the collection $\{(\Gamma_t,D_t^+,D_t^-)\}_{t\geq0}$ is uniquely determined, independently of the choice of the initial datum $u_o$ with the properties above, by the initial triple $(\Gamma_o,D_o^+,D_o^-)$.

One of the interesting facts of weak geometric flows in the level set approach is that even if we start out with a smooth initial hypersurface $\Gamma_o$, at some later time $t>0$, the front $\Gamma_t$ may develop interior points.
We say below that the \emph{no-interior condition} holds for the set $\{u=0\}$ if
\begin{equation}\label{nointerior}
\{(x,t):u(x,t)=0\}=\partial\{(x,t):u(x,t)>0\}=\partial\{(x,t):u(x,t)<0\}.
\end{equation}
The importance of the no-interior condition is clear in the following result; for a more precise discussion about condition (\ref{nointerior}) see \cite{bss}.
To explain it we need to recall the concept of half relaxed limits of a locally bounded family of functions $u^\varepsilon:\R^n\times(0,+\infty)\to\R$. These are defined as
\begin{equation*}\begin{array}{l}
\liminf_{*\varepsilon\to0^+}\;u^\varepsilon(x,t):=\lim_{r\to0^+}\inf\{u^\varepsilon(y,s):0<\varepsilon<r,\;(y,s)\in B(x,r)\times(t-r,t+r)\}\\
\limsup_{\varepsilon\to0^+}^*\;u^\varepsilon(x,t):=\lim_{r\to0^+}\sup\{u^\varepsilon(y,s):0<\varepsilon<r,\;(y,s)\in B(x,r)\times(t-r,t+r)\}
\end{array}\end{equation*}

\begin{thm}\label{chi viscosol}
\begin{description}
\item[(i)] The two functions $\overline\chi(x,t)=\mathds1_{D_t^+\cup\Gamma_t}(x)-\mathds1_{D_t^-}(x),$
$\underline\chi(x,t)=\mathds1_{D_t^+}(x)-\mathds1_{D_t^-\cup\Gamma_t}(x)$
are viscosity solutions of (\ref{eqhj}) (respectively the maximal subsolution and the minimal supersolution) associated respectively with the discontinuous initial data $w_o=\mathds1_{D_o^+\cup\Gamma_o}-\mathds1_{D_o^-}$ and $w_o=\mathds1_{D_o^+}-\mathds1_{D_o^-\cup\Gamma_o}$, respectively.
 \item[(ii)] Suppose that $\Gamma_o$ has an empty interior; then the Cauchy problem (\ref{eqhj}) associated with the initial data $w_o=\mathds1_{D_o^+}-\mathds1_{D_o^-}$ has a unique discontinuous solution if and only if the no-interior condition (\ref{nointerior}) holds, and this solution is given by the function
 \begin{equation}\label{eqchi}
 \chi(x,t)=\mathds1_{D_t^+}(x)-\mathds1_{D_t^-}(x).\end{equation}
\end{description}
\end{thm}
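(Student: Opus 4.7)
The proof rests on the geometric invariance of equation (\ref{eqhj}) recalled in the introduction: if $u$ is a viscosity solution and $\psi\in C^1(\R)$ is nondecreasing, then $\psi(u)$ is a viscosity solution of the same pde. I would combine this with the Barles--Perthame stability of half-relaxed limits together with the comparison principle for (\ref{eqhj}) on continuous data established in \cite{dzs1}.

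For (i), take nondecreasing smooth $\psi_n\colon\R\to[-1,1]$ with $\psi_n(s)\to 2\mathds1_{[0,+\infty)}(s)-1$ pointwise, approaching the step from below. Each $\psi_n(u)$ is then a viscosity solution with continuous initial datum $\psi_n(u_o)$; its pointwise limit is $\overline\chi$, which is already upper semicontinuous since $D_t^+\cup\Gamma_t=\{u(\cdot,t)\ge 0\}$ is closed. Stability of the half-relaxed $\limsup$ yields that $\overline\chi$ is a viscosity subsolution of (\ref{eqhj}), while $\psi_n(u_o)\to\mathds1_{D_o^+\cup\Gamma_o}-\mathds1_{D_o^-}$ delivers the initial condition. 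A symmetric construction with nondecreasing smooth $\tilde\psi_n\to 2\mathds1_{(0,+\infty)}-1$ approaching from above gives $\underline\chi$ as a lower semicontinuous supersolution.

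To see $\overline\chi$ is maximal among subsolutions, let $v$ be USC with $v(\cdot,0)\le \mathds1_{D_o^+\cup\Gamma_o}-\mathds1_{D_o^-}$. For every $\delta>0$ pick a continuous $u_o^\delta\ge v(\cdot,0)$ whose nonnegative set is a $\delta$-neighborhood of $D_o^+\cup\Gamma_o$, and let $u^\delta\in C(\R^n\times[0,+\infty))$ be the solution of (\ref{eqhj}) with this datum. Then $\psi_n(u_o^\delta)\ge v(\cdot,0)$ for every $n$, so the comparison principle applied to the subsolution $v$ and the continuous solution $\psi_n(u^\delta)$ gives $v\le \psi_n(u^\delta)$, and letting $n\to\infty$ yields $v\le 2\mathds1_{\{u^\delta\ge 0\}}-1$. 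Geometric invariance identifies $\{u^\delta\ge 0\}$ with the weak flow of its initial zero superlevel set; as $\delta\downarrow 0$ this flow converges to $\{u\ge 0\}$, producing $v\le\overline\chi$. A mirror argument gives the minimality of $\underline\chi$.

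For (ii), assume the no-interior condition (\ref{nointerior}). Then $\Gamma_t=\partial D_t^+=\partial D_t^-$ in space-time, so every $(x,t)$ with $x\in\Gamma_t$ is accumulated by both $D_s^+$- and $D_s^-$-points; hence $\chi^*\equiv 1=\overline\chi$ and $\chi_*\equiv -1=\underline\chi$ on $\{u=0\}$, while $\chi^*=\chi_*=\chi$ elsewhere. By part (i), $\chi^*=\overline\chi$ is a subsolution and $\chi_*=\underline\chi$ is a supersolution, so $\chi$ is a discontinuous viscosity solution of (\ref{eqhj}) with initial datum $\mathds1_{D_o^+}-\mathds1_{D_o^-}$. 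Any other discontinuous solution $v$ with the same initial datum satisfies, by part (i), $v^*\le\overline\chi=\chi^*$ and $v_*\ge\underline\chi=\chi_*$, and since $\overline\chi=\underline\chi$ off the empty-interior set $\{u=0\}$, we conclude $v=\chi$ pointwise off this set, hence uniqueness in the discontinuous-solution sense. Conversely, if (\ref{nointerior}) fails, the open portion of $\{u=0\}$ leaves room to modify $\chi$ inside the bracket $[\underline\chi,\overline\chi]$ without leaving the class of discontinuous solutions, producing nonuniqueness. The main obstacle I anticipate is the $\delta\downarrow 0$ passage in the maximality step: one must construct $u_o^\delta$ so as to simultaneously dominate $v(\cdot,0)$ and have its zero level set shrink onto $\Gamma_o$ in such a way that the associated geometric flows converge; this is standard in the level-set literature (cf.\ \cite{bss,gi}) but here relies critically on the discontinuous-$\alpha$ comparison principle from \cite{dzs1}.
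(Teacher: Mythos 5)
Your overall strategy (compose $u$ with monotone truncations, use geometric invariance, stability and the comparison principle of \cite{dzs1}) is the same as the paper's, but as written it proves less than the statement and leaves the ``only if'' part of (ii) without a proof. In (i) you only obtain one-sided information: $\overline\chi$ is a subsolution and $\underline\chi$ is a supersolution. The theorem asserts that each is a (discontinuous) \emph{solution}, i.e.\ that $\overline\chi_*$ is also a supersolution and $\underline\chi^*$ also a subsolution. The paper gets this with a single, shifted family $\psi^\epsilon(r)=\tanh\bigl((r-\sqrt\epsilon)/\epsilon\bigr)$: for $U^\epsilon=\psi^\epsilon(u)$ one checks that $\limsup^*_{\epsilon\to0^+}U^\epsilon=\underline\chi^*$ and $\liminf_{*\,\epsilon\to0^+}U^\epsilon=\underline\chi$, so both relaxed limits of the \emph{same} family produce the two envelopes of $\underline\chi$, and stability gives the full solution property (similarly for $\overline\chi$). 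This is not a cosmetic point: in (ii) the natural witnesses of nonuniqueness when (\ref{nointerior}) fails are precisely $\overline\chi$ and $\underline\chi$, which are both solutions of the Cauchy problem with datum $\mathds1_{D_o^+}-\mathds1_{D_o^-}$ but have different semicontinuous envelopes. Your converse (``the open portion of $\{u=0\}$ leaves room to modify $\chi$ inside the bracket'') does not exhibit any second solution, and with only the sub-/supersolution halves of (i) you cannot, since an arbitrary modification inside $[\underline\chi,\overline\chi]$ need not solve the equation.

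There is also a direction error in your maximality argument: if $\psi_n$ approaches the step $2\mathds1_{[0,+\infty)}-1$ \emph{from below}, then $\psi_n(u_o^\delta)\ge v(\cdot,0)$ can fail wherever $v(\cdot,0)=1$ and $u_o^\delta$ is small, so the comparison cannot be invoked. The fix is the paper's choice in part (ii): take $\psi_n$ nondecreasing with $\psi_n(r)=1$ for $r\ge0$ and $\inf_n\psi_n=-1$ on $(-\infty,0)$, i.e.\ approximation from above. With that choice $\psi_n(u_o)\ge\mathds1_{D_o^+\cup\Gamma_o}-\mathds1_{D_o^-}\ge v(\cdot,0)$ directly, comparison with the continuous solution $\psi_n(u)$ gives $v\le\psi_n(u)$, and letting $n\to\infty$ yields $v\le\overline\chi$; no auxiliary datum $u_o^\delta$, and hence none of the $\delta\downarrow0$ convergence of flows you flag as the main obstacle, is needed. (The same simplification is exactly how the paper proves uniqueness in (ii): comparison against $\psi_n(u)$ forces $w=1$ in $D_t^+$ and $w=-1$ in $D_t^-$, and the no-interior condition then pins down the envelopes.) Your verification that $\chi^*=\overline\chi$, $\chi_*=\underline\chi$ under (\ref{nointerior}) and the resulting uniqueness argument are fine and agree with the paper, modulo the points above.
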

\begin{proof} We sketch this proof for the reader's convenience since even for $\alpha$ piecewise continuous it does not change much from the one in \cite{bss,soug}, given for a continuous $\alpha$. (i) The first statement of the theorem follows from the stability of viscosity solutions which holds for discontinuous equations as well, see \cite{dzs1}. To prove that the function $\underline\chi(x,t)$ is a solution of (\ref{eqhj}) associated with the initial datum $w_o=\mathds1_{D_o^+}-\mathds1_{D_o^-\cup\Gamma_o}$,
we consider the change of variables $\psi^\epsilon(r)=\tanh\big(\frac{r-\sqrt\epsilon}{\epsilon}\big)$. Since for every $\epsilon>0$ the function $\psi^\epsilon$ is strictly increasing we also have that every $U^\epsilon(x,t)=\psi^\epsilon(u(x,t))$ is a continuous viscosity solution of (\ref{eqhj}) associated with the initial datum $\psi^\epsilon(u_o)$. Moreover we can easily see that $\underline\chi^*(x,t)=\limsup^*_{\varepsilon\to0^+}U^\epsilon(x,t)$, $\underline\chi(x,t)=\underline\chi_*(x,t)=\liminf_{*\varepsilon\to0^+}U^\epsilon(x,t)$ and hence, by the stability property of viscosity sub/super-solutions, $\underline\chi$ is a discontinuous viscosity solution of (\ref{eqhj}-i).\\
(ii) If $\Gamma_o$ has empty interior and the set $\{u=0\}$ doesn't satisfy (\ref{nointerior}), by the first part of the proof we have that $\overline\chi$ and $\underline\chi$ have different semicontinuous envelopes and are both solutions of the Cauchy problem. \\
To prove the opposite implication, assume on the contrary that condition (\ref{nointerior}) holds and let $\chi$ as in (\ref{eqchi}). Then $\chi^*=\overline\chi$, $\chi_*=\underline\chi$ and so, by (i), $\chi$ is a solution of (\ref{eqhj}-i).
If $w$ is a discontinuous solution of (\ref{eqhj}) with discontinuous initial condition $w_o=\mathds1_{D_o^+}-\mathds1_{D_o^-}$, then by comparison principle, see \cite{dzs1}, $-1\leq w\leq 1$ in $\R^n\times[0,+\infty)$. Consider now a family of increasing smooth functions $\psi_n:\R\to\R$ such that $-1\leq\psi_n\leq1,\;\psi_n(r)=1$ if $r\geq0$ and $\inf_n\psi_n=-1$ in $(-\infty,0)$. By the comparison principle, we obtain that for all $n$, $w\leq w^*\leq \psi_n(u)$ for all $n$, where $u$ is the solution of (\ref{eqhj}). Thus $w=-1$ in $D_t^-$. Similarly one proves that $w=1$ in $D_t^+$ and we conclude by the no-interion condition that
$w(\cdot,t)=\underline{\chi}=\overline{\chi}$ in $D^+_t\cup D^-_t$.
\end{proof}

\begin{rem}
In the above statement, uniqueness of discontinuous solutions is meant in the sense that $u,w$ are locally bounded, $u(x,0)=w(x,0)=w_o(x)$, they are continuous on
$\{(x,0):x\in D^+_o\cup D^-_o\}$, and $u^*=w^*$, $u_*=w_*$ in $\R^n\times [0,+\infty)$.
\end{rem}

Now we can give the definition of generalized super- and subflow with prescribed normal discontinuous velocity following \cite{bdl}, (see also \cite{bs}).
\begin{defin}\label{generalized flow} A family $\open$ (resp. $\close$) of open (resp. closed) subsets of $\RRn$ is called a \emph{generalized superflow} (resp. \emph{subflow}) with normal velocity $-\alpha(x)$ if, for any $x_0\in\RRn$, $t\in(0,T)$, $r>0$, $h>0$ so that $t+h<T$ and for any smooth function $\phi:\RRn\times[0,T]\rightarrow\RR$ such that:
\begin{description}
  \item[(i)] $\partial\phi(x,s)/\partial t+\alpha^*(x)|D\phi(x,s)|<0$ (resp. $\partial\phi(x,s)/\partial t+\alpha_*(x)|D\phi(x,s)|>0$) in $B(x_0,r]\times[t,t+h]$
  \item[(ii)] $\{x\in B(x_0,r]:\phi(x,s)=0\}\neq\emptyset$, for any $s\in[t,t+h]$ and
  $$|D\phi(x,s)|\neq0\mbox{ on }\{(x,s)\in B(x_0,r]\times[t,t+h]:\phi(x,s)=0\}$$
  \item[(iii)] $\{x\in B(x_0,r]:\phi(x,t)\geq0\}\subset\Omega_t$ (resp. $\{x\in B(x_0,r]:\phi(x,t)\leq0\}\subset\mathcal{F}_t^c$),
  \item[(iv)] $\{x\in \partial B(x_0,r]:\phi(x,s)\geq0\}\subset\Omega_s$ for all $s\in[t,t+h]$ (resp. $\{x\in \partial B(x_0,r]:\phi(x,s)\leq0\}\subset\mathcal{F}_s^c$),
\end{description}
then we have
$$\{x\in B(x_0,r]:\phi(x,s)>0\}\subset\Omega_s,\quad
(\mbox{resp. }\{x\in B(x_0,r]:\phi(x,s)<0\}\subset\mathcal{F}_s^c,)$$
for every $s\in(t,t+h)$.

A family $\open$ of open subsets of $\RRn$ is called a \emph{generalized flow} with normal velocity $-\alpha(x)$ if $\open$ is a superflow and  $(\overline{\Omega}_t)_{t\in(0,T)}$ is a subflow.
\end{defin}
\begin{rem} It follows immediately by Definition \ref{generalized flow} that a family $\open$ of open subsets of $\RRn$ is a generalized superflow with normal velocity $-\alpha(x)$ if and only if $(\Omega_t^c)_{t\in(0,T)}$ is a generalized subflow with normal velocity $\alpha(x)$.
\end{rem}
The role of the super- subflows in the level set approach is described by the following statement.
\begin{thm} \label{flow-viscositysol}
\begin{description}
\item[(i)]Let $\open$ be a family of open subsets of $\RRn$ such that the set $\Omega:=\bigcup_{t\in(0,T)}\Omega_t\times\{t\}$ is open in $\RRn\times[0,T]$. Then $\open$ is a generalized superflow with normal velocity $-\alpha$ if and only if the function $\chi=\mathds{1}_{\Omega}-\mathds{1}_{\Omega^c}$ is a viscosity supersolution of (\ref{eqhj}-i)
\item[(ii)]Let $\close$ be a family of closed subsets of $\RRn$ such that the set $\mathcal{F}:=\bigcup_{t\in(0,T)}\mathcal{F}_t\times\{t\}$ is closed in $\RRn\times[0,T]$. Then $\close$ is a generalized subflow with normal velocity $-\alpha$ if and only if the function $\overline\chi=\mathds{1}_{\mathcal{F}}-\mathds{1}_{\mathcal{F}^c}$ is a viscosity subsolution of (\ref{eqhj}-i)
\end{description}
\end{thm}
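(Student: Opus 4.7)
By the remark following Definition \ref{generalized flow}, items (i) and (ii) are dual under complementation $\Omega_t \leftrightarrow \mathcal{F}_t^c$, so the plan is to establish (i) and obtain (ii) by applying the corresponding statement to the complements. Since $\Omega$ is open in $\RRn \times [0, T]$, the function $\chi = \mathds{1}_\Omega - \mathds{1}_{\Omega^c}$ is already lower semicontinuous, so $\chi_* = \chi$; moreover $\chi$ is locally constant in the interior of $\Omega$ and of $\Omega^c$, where the supersolution inequality is automatic (at a minimum of $\chi - \varphi$ inside such an open set, $\varphi$ attains a local maximum so $D\varphi$ and $\varphi_t$ vanish). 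The supersolution condition is therefore concentrated on $\partial \Omega$ in spacetime, where $\chi = -1$.

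For the direction \emph{superflow} $\Rightarrow$ \emph{$\chi$ supersolution}, the plan is to argue by contradiction. Fix $(x_0, t_0) \in \partial \Omega$ and a test $\varphi \in C^1$ with $\chi - \varphi$ attaining a local minimum at $(x_0, t_0)$, normalize $\varphi(x_0, t_0) = -1$, and assume $\varphi_t(x_0, t_0) + \alpha^*(x_0)|D\varphi(x_0, t_0)| < 0$. First I would replace $\varphi$ by $\varphi(x, s) - \delta(|x - x_0|^2 + (s - t_0)^2)$ to turn the minimum into a strict quadratic one without disturbing first derivatives, and then add a small linear term $\mu \cdot (x - x_0)$, if necessary, to arrange $D\varphi(x_0, t_0) \neq 0$ while keeping the strict Hamilton--Jacobi inequality (possible since $\alpha^*$ is upper semicontinuous). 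I would then put $\phi(x, s) := \varphi(x, s) + 1 + \eta$ on a small cylinder $Q = B(x_0, r] \times [t_0 - h, t_0 + h]$ with $\eta > 0$ small, and verify the four hypotheses of Definition \ref{generalized flow}: (i) holds on $Q$ by continuity of $\varphi_t, D\varphi$ and upper semicontinuity of $\alpha^*$; (ii) follows from $D\varphi \neq 0$ via the implicit function theorem, which makes $\{\phi = 0\}$ a regular hypersurface meeting $B(x_0, r]$ for every $s$; and (iii)--(iv) hold because the strict quadratic minimum forces $\varphi$ to lie uniformly below $-1$ on the compact portion of the parabolic boundary of $Q$ intersected with $\Omega^c$, so that a sufficiently small $\eta$ places $\{\phi \geq 0\}$ inside $\Omega$ there. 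Invoking the superflow property then gives $\{\phi > 0\} \cap B(x_0, r] \subset \Omega_s$ for $s \in (t_0 - h, t_0 + h)$, which contradicts $\phi(x_0, t_0) = \eta > 0$ together with $(x_0, t_0) \in \partial \Omega \subset \Omega^c$.

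For the converse, given $\phi$ and data $(x_0, t, r, h)$ satisfying (i)--(iv), I would assume toward contradiction that there exists $(\bar x, \bar s) \in B(x_0, r] \times (t, t + h)$ with $\phi(\bar x, \bar s) > 0$ yet $\chi(\bar x, \bar s) = -1$, and run a viscosity touching argument on $Q$. Hypotheses (iii) and (iv) prevent contact on the initial slice $\{s = t\}$ and on the lateral face $\partial B(x_0, r] \times [t, t + h]$, forcing an extremum of $\chi + \lambda \phi$ (for suitably chosen $\lambda > 0$) to lie in the interior of $Q$; at such an interior contact point $\chi = -1$ and $\phi$ is smooth with $D\phi \neq 0$ by (ii), so a suitable shift of $\phi$ serves as an admissible test function against $\chi$. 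The supersolution property of $\chi$ then yields $\phi_t + \alpha^*|D\phi| \geq 0$ at the contact point, contradicting the strict inequality in (i). The main technical obstacle I expect is the perturbation step in the forward direction, especially when $D\varphi(x_0, t_0) = 0$: the small linear shift must simultaneously produce a nonzero gradient, preserve the strict quadratic minimum, and respect the strict Hamilton--Jacobi inequality, and the level set $\{\phi = 0\}$ must remain a regular, nonempty hypersurface throughout the entire interval $[t_0 - h, t_0 + h]$ rather than merely at the tangency point, where the interplay with the merely upper semicontinuous $\alpha^*$ has to be handled with care.
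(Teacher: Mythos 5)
Your overall architecture --- reduce to boundary points of $\Omega$, test the superflow definition with a strictified, vertically shifted test function in one direction, and a touching argument forced into the interior by (iii)--(iv) in the other --- is the same as in the Barles--Da Lio argument \cite{bdl} that the paper simply cites (it omits the proof), and your direction ``supersolution $\Rightarrow$ superflow'' is essentially sound, up to a sign (one should minimize $\chi-\lambda\phi$, so that $\lambda\phi$ is a test function touching $\chi$ from below; a minimum of $\chi+\lambda\phi$ would give the wrong inequality) and the standard precaution that a contact point occurring at the top time slice must be pushed into the open time interval.

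The genuine gap is in the direction ``superflow $\Rightarrow$ supersolution'', exactly at the point you flag: the case $D\varphi(x_0,t_0)=0$. The proposed fix --- adding a small linear term $\mu\cdot(x-x_0)$ --- does not secure condition (ii) of Definition \ref{generalized flow} compatibly with (iii)--(iv). To keep the inclusions on the parabolic boundary, the strict quadratic minimum dominates the linear perturbation only if $|\mu|<\delta r$ (lateral face) and $|\mu|<2\delta h$ (initial slice); but then the spatial critical point of $\phi(\cdot,s)$, located near $x_0+\mu/(2\delta)$, lies inside $B(x_0,r]$ and nothing prevents the zero level set from passing through it, so the nondegeneracy in (ii) can fail; moreover the spatial variation of $\phi$ is then of order $|\mu|r$, which is dominated by the temporal variation $|\varphi_t(x_0,t_0)|h$, so $\{\phi(\cdot,s)=0\}\cap B(x_0,r]$ can be empty at some slices and the nonemptiness in (ii) can fail too. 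The classical resolution is a different construction in this degenerate case: exploit $\varphi_t(x_0,t_0)<0$ together with the bound $\alpha^*\le n_2$ and test with shrinking spheres, $\phi(x,s)=\rho^2-|x-x_0|^2-C(s-t_0+h)$ with $C>2\norm{n_2}_\infty\rho$, whose zero set is automatically a nonempty regular sphere on the relevant time interval and whose initial inclusion (iii) follows from the strict minimum of $\chi-\varphi$ and $\varphi_t<0$; this is precisely the function $\Phi$ of (\ref{Phi def}) used elsewhere in the paper. A smaller soft spot in the nondegenerate case: the implicit function theorem alone does not give that $\{\phi(\cdot,s)=0\}$ meets $B(x_0,r]$ for \emph{every} $s$, since $\phi(x_0,t_0)=\eta>0$; one should first fix the cylinder using $D\varphi(x_0,t_0)\neq0$, so that $\phi$ changes sign along the direction $\pm D\varphi(x_0,t_0)$ uniformly in $s$, and only then choose $\eta$ small.
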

\begin{proof} The argument of the proof follows with slight changes the one given in \cite{bdl}, although $\alpha$ is discontinuous, and we omit it.  
\end{proof}

We now give a result that explicitly points out the connection between the level-set approach and the definition of generalized flow given here.
\begin{cor}\label{Cor levelset-flow}
 Assume to have two families of open subsets of $\RRn$, $(\Omega^1_t)_{t\in(0,T)}$ and $(\Omega^2_t)_{t\in(0,T)}$ such that $(\Omega^1_t)_{t\in(0,T)}$ and $((\Omega^2_t)^c)_{t\in(0,T)}$ are respectively super- and subflows with normal velocity $-\alpha$ and also $\Omega_1=\cup_{t\in(0,T)}\Omega^1_t\times\{t\}$, $\Omega_2=\cup_{t\in(0,T)}\Omega^2_t\times\{t\}$ are open and disjoint. 
Define now
 $${\underline w}(x,t)=\mathds{1}_{\Omega^1}-\mathds{1}_{(\Omega^1)^c},\quad
{\overline w}(x,t)=\mathds{1}_{(\Omega^2)^c}-\mathds{1}_{\Omega^2},$$
and note that they are lower and upper semicontinuous respectively. Extend $\underline w,\;\overline w$ by semicontinuity at $t=0$ and finally define
$$\Omega^1_0=\{x\in\RRn:\chi(x,0)=1\},\qquad\Omega^2_0=\{x\in\RRn:\overline\chi(x,0)=-1\}.$$
 Suppose moreover that there exists $(\Gamma_0,D_0^+,D_0^-)\in\mathcal{E}$ such that $D^+_0\subseteq\Omega_0^1$ and $D^-_0\subseteq\Omega_0^2$. Then, if we denote with $(\Gamma_t,D_t^+,D_t^-)$ the level set evolution of $(\Gamma D_0,D_0^+,D_0^-)$, we have:\\
\indent (i) for all $t\in[0,T)$,
 $$D^+_t\subset\Omega^1_t\subset D^+_t\cup\Gamma_t,\qquad D^-_t\subset\Omega^2_t\subset D^-_t\cup\Gamma_t,$$ \indent (ii) if $\bigcup_t \Gamma_t\times\{t\}$ satisfies the no-interior condition, then for all $t\in[0,T)$, $$D^+_t=\Omega^1_t,\qquad D^-_t=\Omega^2_t.$$
\end{cor}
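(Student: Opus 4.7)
The strategy is to use Theorem \ref{flow-viscositysol} to recognize $\underline w$ and $\overline w$ as a supersolution and a subsolution of (\ref{eqhj}-i), sandwich them between $\underline\chi$ and $\overline\chi$ from Theorem \ref{chi viscosol} by a comparison argument, read off (i) as inclusions at each time slice, and close (ii) with a short no-interior/openness argument.

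Openness of $\Omega^1$ in $\RRn\times(0,T)$ together with the superflow hypothesis puts $\underline w$ in the scope of Theorem \ref{flow-viscositysol}(i), so $\underline w$ is a viscosity supersolution of (\ref{eqhj}-i); closedness of $(\Omega^2)^c$ and the subflow hypothesis analogously make $\overline w$ a viscosity subsolution via Theorem \ref{flow-viscositysol}(ii). The disjointness $\Omega^1\cap\Omega^2=\emptyset$ gives $\underline w\leq\overline w$ pointwise by checking cases (both equal $1$ on $\Omega^1$, both equal $-1$ on $\Omega^2$, and $\underline w=-1\leq 1=\overline w$ elsewhere). Fix any $u_o\in C(\RRn)$ with $\{u_o>0\}=D_0^+$, $\{u_o<0\}=D_0^-$, $\{u_o=0\}=\Gamma_0$, and let $u$ be the corresponding continuous viscosity solution of (\ref{eqhj}), so that $\underline\chi$ and $\overline\chi$ are built from $u$ as in Theorem \ref{chi viscosol}(i). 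The inclusions $D_0^+\subseteq\Omega_0^1$ and $D_0^-\subseteq\Omega_0^2$ translate at $t=0$ into $\underline w(\cdot,0)\geq\mathds{1}_{D_0^+}-\mathds{1}_{D_0^-\cup\Gamma_0}$ and $\overline w(\cdot,0)\leq\mathds{1}_{D_0^+\cup\Gamma_0}-\mathds{1}_{D_0^-}$. To upgrade these to the global comparisons $\underline\chi\leq\underline w$ and $\overline w\leq\overline\chi$ on $\RRn\times[0,T)$, I plan to apply the comparison principle of \cite{dzs1} to the continuous approximations $U^\epsilon=\psi^\epsilon(u)$ and $\tilde U^\epsilon=\tilde\psi^\epsilon(u)$ employed in the proof of Theorem \ref{chi viscosol}(i), after a vanishing shift $C_\epsilon=1-\tanh(1/\sqrt\epsilon)\to 0^+$ to absorb the jump on $\Gamma_0$: one checks $U^\epsilon(\cdot,0)\leq\underline w(\cdot,0)+C_\epsilon$ and $\tilde U^\epsilon(\cdot,0)\geq\overline w(\cdot,0)-C_\epsilon$, invokes comparison, and passes to $\liminf_*$ and $\limsup^*$ as $\epsilon\to 0^+$. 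The outcome is the chain $\underline\chi\leq\underline w\leq\overline w\leq\overline\chi$ on $\RRn\times[0,T)$, and reading off the sets where each function equals $\pm 1$ yields $D_t^+\subseteq\Omega_t^1\subseteq D_t^+\cup\Gamma_t$ and $D_t^-\subseteq\Omega_t^2\subseteq D_t^-\cup\Gamma_t$, i.e., part (i).

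For (ii), assume (\ref{nointerior}) and suppose, toward a contradiction, that $x\in\Omega_t^1\setminus D_t^+$ for some $t\in(0,T)$. By (i) one has $x\in\Gamma_t$, hence $(x,t)\in\Gamma:=\bigcup_s\Gamma_s\times\{s\}$. Openness of $\Omega^1$ in spacetime provides an open neighborhood $V\ni(x,t)$ with $V\subseteq\Omega^1$. By (i) again $D^-\subseteq\Omega^2$, so $V\cap D^-\subseteq\Omega^1\cap\Omega^2=\emptyset$; on the other hand (\ref{nointerior}) identifies $\Gamma=\partial D^-$, hence every neighborhood of $(x,t)$ must meet $D^-$, a contradiction. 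Therefore $\Omega_t^1=D_t^+$, and the symmetric argument using $\Gamma=\partial D^+$ gives $\Omega_t^2=D_t^-$. The most delicate step is the initial-time comparison of step two, because $\underline\chi(\cdot,0)=-1$ on all of $\Gamma_0$ while $\underline w(\cdot,0)$ can take the value $+1$ on the relatively open part of $\Gamma_0$ contained in $\Omega_0^1$; it is precisely this gap that the $\psi^\epsilon$-approximation with the shift $C_\epsilon$ is designed to absorb.
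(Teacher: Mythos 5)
Your proposal is correct and follows essentially the paper's own route: Theorem \ref{flow-viscositysol} turns $\underline w$ into a supersolution and $\overline w$ into a subsolution of (\ref{eqhj}-i), these are sandwiched as $\underline\chi\leq\underline w\leq\overline w\leq\overline\chi$ (disjointness of $\Omega^1,\Omega^2$ giving the middle inequality), part (i) is read off from the definitions, and the no-interior condition closes part (ii). The only difference is that where the paper simply invokes the minimality/maximality of $\underline\chi,\overline\chi$ among super-/subsolutions asserted in Theorem \ref{chi viscosol}(i), you re-derive that comparison through the $\psi^\epsilon$-approximations of $u$ shifted by $C_\epsilon$ together with the comparison principle of \cite{dzs1}; this is a legitimate unpacking of the cited extremality property rather than a genuinely different argument.
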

\begin{proof} Define ${\underline\chi}$ and $\overline \chi$ as in Theorem \ref{chi viscosol}. For the first part of the statement,
by Theorem \ref{flow-viscositysol}(i) the function $\underline w$ is a supersolution of (\ref{eqhj}), therefore by its initial condition ${\underline w}(x,0)\geq{\underline\chi}(x,0)$ and Theorem \ref{chi viscosol}(i) we get that ${\underline w}\geq{\underline\chi}$ which is minimal among supersolutions. Similarly $\overline w$ is a subsolution of (\ref{eqhj}), therefore ${\overline w}\leq {\overline\chi}$ which is maximal among subsolutions.
Comparing the definitions now the conclusion follows.

Similarly for the second part of the statement.
\end{proof}

\section{Asymptotics of reaction-diffusion equations} \label{f hyp}

We now list the main assumptions for our problem that will hold for the rest of the paper except Section 5. Most of them are technical conditions stated in the way we will need them. In the case that the nonlinearity is as in (\ref{fepsilon def}), they will follow easily from a few regularity hypotheses on the family $\{c^\gep\}_{\gep>0}$.

For the data of the Cauchy problem (\ref{asymptotic-probl}), we suppose that
$g\in C(\R^n)$, $-1\leq g\leq1$ while $f^\gep\in C^2(\RR\times\RRn)$, satisfies the following properties, where $\gamma,\rho\in(0,1)$:
\begin{equation}\label{fproperty}
\begin{split}
\left\{
\begin{array}{l}
\mbox{for any } x\in\RRn\;
f^\varepsilon(\cdot,x)\mbox{ has exactly three zeroes }-1,m_o^\gep(x),1,\;0<\rho<m_o^\varepsilon(x)<1-\rho,\\
f^\varepsilon(\cdot,x)>0\mbox{ in }(-1,{m_o^\varepsilon(x)})\cup(1,+\infty)\mbox{ and }f^\varepsilon(\cdot,x)<0\mbox{ in }(-\infty,-1)\cup({m_o^\varepsilon(x)},1),\\
\mbox{there exists a }\gamma>0\mbox{ such that }f^\varepsilon_q(q,x)\geq\gamma\mbox{ for all }q\leq-1+\gamma\mbox{ or }q\geq1-\gamma,\hbox{ and }x\in\R^n,\\
f^\varepsilon_{qq}(-1,x)<0\mbox{ and }f^\varepsilon_{qq}(1,x)>0,
\end{array}
\right.
\end{split}
\end{equation}
and also, for some $k\in[0,\frac12]$,
\begin{equation}\label{fproperty2}
\begin{split}
\left\{
\begin{array}{l}
\mbox{for every compact } K \subset\RR\mbox{ there exist constants }C=C(K)>0\\
\mbox{ such that, for all }(q,x)\in K\times\RRn,\:1\leq i,j\leq n,\\
|f^\varepsilon_{q}(q,x)|,|f^\varepsilon_{qq}(q,x)|\leq C,\;|f^\varepsilon_{x_i}(q,x)|,|f^\varepsilon_{x_iq}(q,x)|\leq\frac{C_1}{\varepsilon^k},\;|f^\varepsilon_{x_ix_j}(q,x)|\leq\frac{C_2}{\varepsilon^{2k}}.
\end{array}
\right.
\end{split}
\end{equation}
Below we denote with $\overline m(x)=\limsupstar m_o^\gep(x)$, ${\underline m}(x)=\liminfstar m_o^\gep(x)$ the upper semicontinuous and, respectively, lower semicontinuous half relaxed limits of the family $\{m_o^\gep\}_{\gep>0}$.
We also assume on $f$ that: for every compact $K_1\subset\R^n$ and $m_1>\sup_{x\in K_1}\overline m(x)$, $m_2<\inf_{x\in K_1}\underline m$,
there are two functions
\begin{equation}\label{eqestf}
\begin{array}{c}
\bar f,\;{\underline f}\in C^2(\RR\times \R^n)\mbox{ satisfying }(\ref{fproperty}),(\ref{fproperty2})\mbox{ with zeroes in }\{-1,m_1,1\},\{-1,m_2,1\}\mbox{ respectively, and}\\
{\underline f}\leq f^\gep\leq\overline f,\mbox{ for all }x\in K_1, \;q\in[-1,1],\; \gep>0\mbox{ sufficiently small.}
\end{array}\end{equation}
The typical example for the function $f^\gep$ is
\begin{equation}\label{fepsilon defa}
f^\varepsilon(q,x):=2\Big(q-\frac{c^\varepsilon(x)}{2}\Big)(q^2-1).
\end{equation}
It satisfies all the assumptions listed above with $m_o^\varepsilon(x)=c^\varepsilon(x)/2$, provided that
\begin{equation}\label{eqasscgep}
\begin{array}{ll}
c^{\varepsilon}\in C^2(\R^n), 0<\rho<c^\varepsilon(x)/2<1-\rho,\\
\abs{\partial_{x_i}c^\gep(x)}\leq\frac{C_1}{\varepsilon^k},\;\abs{\partial_{x_ix_j}^2c^\gep(x)}\leq \frac{C_2}{\varepsilon^{2k}},\;\forall x\in\R^n,\;i,j\in\{1,\dots n\},
\end{array}\end{equation}
and in (\ref{eqestf}) we can choose $\overline f(q):=2(q-m_1)(q^2-1)$, $\underline f(q):=2(q-m_2)(q^2-1)$.

Thanks to these properties of $f^\varepsilon$, as proven by Aronsson-Weinberger \cite{aw} and Fife-McLeod \cite{fml}, for all $x\in\R^n$ there is a unique pair $(q^\gep(\cdot),c^\gep(x))$, solution of the traveling wave equation
\begin{equation}\label{travelling-wave eq}
q^\varepsilon_{rr}(r,x)+c^\varepsilon(x)q^\varepsilon_r(r,x)=f^\varepsilon(q^\varepsilon(r,x),x),\qquad(r,x)\in\RR\times\RRn,
\end{equation}
subject to the following conditions
\begin{equation*}
q^\gep(-\infty,x)=-1,\;q^\gep(+\infty,x)=1,\;q^\gep(0,x)={m_o^\gep(x)}
\end{equation*}
and we have that $q^\gep_r>0$.

We will further assume that the pair $(q^\gep(\cdot),c^\gep(x))$ satisfies a series of properties.
There are $a,b>0$ such that
\begin{equation}\label{travelling wave properties a}
\inf_{x\in\R^n}q^\varepsilon(r,x)\geq1-ae^{-br}\mbox{ as } r\to+\infty,\quad
\sup_{x\in\R^n}q^\varepsilon(r,x)\leq-1+ae^{br}\mbox{ as } r\to-\infty,
\end{equation}
and moreover
\begin{equation}\label{travelling wave properties b}
\begin{array}{cc}
q^\varepsilon_r(r,x)\geq K(x,\bar r)>0,\quad\hbox{for }x\in\R^n,\;|r|\leq\bar r,\\
\sup_{(r,x)\in\RR\times\RRn}[(1+|r|)q^\varepsilon_r(r,x)+(1+|r|^2)q^\varepsilon_{rr}(r,x)]<+\infty.
\end{array}
\end{equation}
For any compact $K_1\subset\R^n$ there exist constants $M_1,M_2>0$ such that
\begin{equation}\label{travelling wave properties}
\begin{array}{l}
|Dq^\varepsilon(r,x)|,\;|Dq^\varepsilon_r(r,x)|\leq \frac{M_1}{\varepsilon^k},\;|D^2 q^\varepsilon(r,x)|\leq \frac{M_2}{\varepsilon^{2k}},\;\mbox{for all }x\in K_1,\,r\in\RR.
\end{array}\end{equation}
For instance in the case (\ref{fepsilon defa}), as well known, easy explicit calculations are possible, the traveling wave equation admits as unique solution the function
\begin{equation}
q^\varepsilon(r,x)=\tanh(r+r^\varepsilon(x)),
\end{equation}
where $r^\varepsilon(x)=\frac{1}{2}\ln\left(\frac{2+c^\varepsilon(x)}{2-c^\varepsilon(x)}\right)$ and the velocity of the traveling wave is precisely $c^\gep(x)$ of (\ref{fepsilon defa}). Some simple computations, using the properties of $c^\varepsilon$, show that for each $\varepsilon>0$, (\ref{travelling wave properties a}), (\ref{travelling wave properties b}), (\ref{travelling wave properties}) are satisfied for each $\gep>0$.

We also notice that there exists a $\bar\delta$ such that, for all $\delta\in[-\bar\delta,\bar\delta]$ the function $f^{\varepsilon,\delta}=f^\varepsilon+\delta$ satisfies similar properties to those of $f^{\varepsilon}$, (\ref{fproperty}) (\ref{fproperty2}) and (\ref{eqestf}), and it has exactly three zeroes in
$m_-^{\gep,\delta}(x)<m_o^{\gep,\delta}(x)<m_+^{\gep,\delta}(x)$, and clearly
$m_-^{\gep,\delta}(x)<-1(>-1)$, $m_+^{\gep,\delta}(x)<1(>1)$ for $\delta>0(<0)$ small enough. In particular, for each $\delta\in[-\bar\delta,\bar\delta]$, there exists a unique pair $(q^{\varepsilon,\delta}(\cdot),\;c^{\varepsilon,\delta})$ which solves the traveling wave equation
$$q^{\varepsilon,\delta}_{rr}(r,x)+c^{\varepsilon,\delta}(x)q^{\varepsilon,\delta}_r(r,x)=f^{\varepsilon,\delta}(q^{\varepsilon,\delta}(r,x),x),\qquad(r,x)\in\RR\times\RRn,$$
subject to
\begin{equation*}
q^{\gep,\delta}(-\infty,x)=m_-^{\gep,\delta}(x),\;q^{\gep,\delta}(+\infty,x)=m_+^{\gep,\delta}(x),\;q^{\gep,\delta}(0,x)=m_o^{\gep,\delta}(x)
\end{equation*}
and such that $q^{\gep,\delta}_r>0$.
The pair moreover satisfies the corresponding of (\ref{travelling wave properties a}), (\ref{travelling wave properties b}), (\ref{travelling wave properties}) and we will also suppose that there is a constant $M>0$ independent of $\gep$ such that
\begin{equation}\label{eqestdelta}
\sup_{x\in\R^n}\left[|c^\gep(x)-c^{\gep,\delta}(x)|+|1-m_+^{\gep,\delta}(x)|+|1+m_-^{\gep,\delta}(x)|\right]\leq M\delta.
\end{equation}
In the case (\ref{fepsilon defa}), one can explicitly compute
$$c^{\varepsilon,\delta}(x)=2m_o^{\gep,\delta}(x)-m_+^{\gep,\delta}(x)-m_-^{\gep,\delta}(x)$$
and therefore the estimate (\ref{eqestdelta}) is an easy consequence of an uniform estimate of the derivative $|f^\gep_q(q,x)|\geq \gamma>0$, for all $x\in \R^n$ and $q$ in a neighborhood of the three zeroes, which follows from (\ref{fproperty}).

Now for the asymptotics of the velocity of the traveling waves, we suppose that there is a smooth hypersurface $\tilde \Gamma$ that satisfies
\begin{equation}\label{c epsilon estimate}\begin{array}{c}
0<2\rho\leq{n_1(x)}< {c^\varepsilon(x)}< {n_2(x)}\leq2(1-\rho),\quad\mbox{for any }x\in\RRn,\\
c^{\varepsilon}\longrightarrow\alpha,\quad \hbox{ locally uniformly off }\tilde\Gamma,
\end{array}\end{equation}
where the functions $\alpha,n_1,n_2$ are assumed as in (\ref{alpha def}).

Again in the case (\ref{fepsilon defa}), we can explicitly choose a family of velocities $c^\varepsilon$ satisfying the assumptions above, as for instance if
\begin{equation}\label{eqcgepa}
c^\varepsilon(x)=\frac{n_1(x)}{2}\Big(1-\tanh\big(\frac{\tilde d(x)}{\varepsilon^k}\big)\Big)+\frac{n_2(x)}{2}\Big(1+\tanh\big(\frac{\tilde d(x)}{\varepsilon^k}\big)\Big),
\end{equation}
where $\tilde d\in C^2(\R^n)$ and coincides with a signed distance function from $\tilde\Gamma$ in a tubular neighborhood and observe that $\overline m=\frac{\alpha^*}2$ and $\underline m=\frac{\alpha_*}2$.

\begin{rem}\label{rem1}
It is clear that the case (\ref{fepsilon defa}) is cleaner and we only need (\ref{alpha def}), (\ref{eqasscgep}) and (\ref{c epsilon estimate}) in order to have the whole set of assumptions satisfied. Many technical assumptions may thus be avoided, in particular due to the direct relationship between the unstable equilibrium and the velocity of the approximating front provided explicitly by the traveling waves.
\end{rem}

\subsection{The abstract method}
To study the asymptotics of the solutions of singular perturbation problems for semilinear reaction-diffusion equations in $\RRn$ we follow the method explained in \cite{bs} and in \cite{bdl} and briefly recall their general idea.

In our asymptotic problem we are given a family $u^\epsilon:\RRn\times[0,T)\to\R$ of bounded regular functions, $-1\leq u^\varepsilon\leq1$, the solutions of the Cauchy problem (\ref{asymptotic-probl}), for any small parameter $\epsilon>0$. Our aim is to show that there exists a generalized flow $(\Gamma_t,\Omega^+_t,\Omega^-_t)_{t\in[0,T)}$ on $\RRn$ with a discontinuous normal velocity determined by the data of the problem such that, as $\epsilon\to0$,
$$
\begin{array}{ll}
u^\epsilon(x,t)\to 1&\mbox{if }(x,t)\in\Omega^+:=\bigcup_{t\in(0,T)}\Omega^+_t\times\{t\},\\
u^\epsilon(x,t)\to -1&\mbox{if }(x,t)\in\Omega^-:=\bigcup_{t\in(0,T)}\Omega^-_t\times\{t\},
\end{array}
$$
where $\pm1\in\RR$ are the stable equilibria of the system. We introduce two open sets
\begin{equation}\label{omega1-2}
\begin{split}
\Omega^1&=\Int\Big\{(x,t)\in\RRn\times[0,T]:\liminfstard u^\epsilon(x,t)= 1\Big\}\\
\Omega^2&=\Int\Big\{(x,t)\in\RRn\times[0,T]:\limsupstard u^\epsilon(x,t)= -1\Big\},
\end{split}
\end{equation}
and define the families $\omegaone$ and $\omegatwo$ by
\begin{equation}\label{omega1-2t}
\begin{array}{ll}
\Omega^1_t=\{x\in\R^n:(x,t)\in\Omega^1\},\quad
\Omega^2_t=\{x\in\R^n:(x,t)\in\Omega^2\},
\end{array}
\end{equation}
for all $t\in(0,T)$. Obviously $\Omega^1$ and $\Omega^2$ are open and disjoint subsets of $\RRn\times(0,T)$ and so the two step functions $\chi$ and $\overline\chi$, defined as
\begin{equation}\label{chi def}
\chi(x,t)=\mathds{1}_{\Omega^1}-\mathds{1}_{(\Omega^1)^c},\quad
\overline\chi(x,t)=\mathds{1}_{(\Omega^2)^c}-\mathds{1}_{\Omega^2}
\end{equation}
are respectively lower and upper semicontinuous on $\RRn\times(0,T)$. Also notice that $\Omega^1=\cup_{t\in(0,T)}\Omega^1_t\times\{t\}$, $\Omega^2=\cup_{t\in(0,T)}\Omega^2_t\times\{t\}$. Finally we extend $\chi$, $\overline\chi$ by lower and upper semicontinuity to the whole of $\RRn\times[0,T]$. For simplicity of notation we still call $\chi$ and $\overline\chi$ these extensions.

To analyze the asymptotics for our functions $u^\epsilon$ we follow three steps.\\
1. \emph{Initialization}: we define the traces $\Omega^1_0$ and $\Omega^2_0$ of $\Omega^1$ and $\Omega^2$ for $t=0$ as
\begin{equation}\label{omega0}
\Omega^1_0=\{x\in\RRn:\chi(x,0)=1\},\qquad\Omega^2_0=\{x\in\RRn:\overline\chi(x,0)=-1\}.
\end{equation}
\emph{2. Propagation}: we show that $\omegaone$ and $((\Omega^2_t)^c)_{t\in(0,T)}$ are respectively super and sub-flows with normal velocity $-\alpha$, where $\alpha$ is defined in (\ref{c epsilon estimate}).\\
\emph{3. Conclusion}: we conclude our asymptotics by applying Corollary \ref{Cor levelset-flow} to $(\Omega^1_t)_{t\in[0,T)}$ and $((\Omega^2_t)^c)_{t\in[0,T)}$.

\subsection{The asymptotic problem}

The front associated with the asymptotics of (\ref{asymptotic-probl}) evolves according to the geometric pde (\ref{eqhj}-i) as we claim in the following theorem.
\begin{thm}\label{thm} Assume (\ref{alpha def}), (\ref{fproperty}), (\ref{fproperty2}), (\ref{eqestf}), (\ref{travelling wave properties a}), (\ref{travelling wave properties b}), (\ref{travelling wave properties}), (\ref{eqestdelta}), (\ref{c epsilon estimate}). Let $u^\varepsilon$ be the unique smooth solution of (\ref{asymptotic-probl}), where $g:\RRn\to[-1,1]$ is a continuous function such that the sets $\Gamma_o=\{x:\underline m(x)\leq g(x)\leq\overline m(x)\}$, $\Omega_o^+=\{x:g(x)>\overline m(x)\}$, $\Omega_o^-=\{x:g(x)<\underline m(x)\}$ are nonempty and $(\Gamma_o,\Omega_o^+,\Omega_o^-)\in{\mathcal E}$. Then
$$u^\varepsilon(x,t)\longrightarrow
\left\{\begin{array}{rll}
1,& &\hbox{if }u(x,t)>0,\\
-1,& &\hbox{if }u(x,t)<0,
\end{array}
\right.
$$
locally uniformly as $\varepsilon\to0$,
where $u$ is the unique viscosity solution of
\begin{equation}\label{PCfront}
\left\{\begin{array}{ll}
u_t(x,t)+\alpha(x)|Du(x,t)|=0\mbox{ in }\RRn\times(0,+\infty),\\
u(x,0)=d_o(x),
\end{array}\right.
\end{equation}
and $d_o$ is the signed distance to $\Gamma_o$ which is positive in $\Omega^+_o$ and negative in $\Omega_o^-$. If in addition the no-interior condition (\ref{nointerior}) for the set $\{u=0\}$ holds, then, as $\varepsilon\to0$,
$$u^\varepsilon(x,t)\longrightarrow
\left\{\begin{array}{rll}
1& &\{u>0\},\\
-1& &\overline{\{u>0\}}^c,
\end{array}
\right.
$$
 locally uniformly.
\end{thm}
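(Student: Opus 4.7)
The plan is to apply the three-step scheme of Section~3.1 to the half-relaxed limits of $(u^\gep)_{\gep>0}$ defining the open sets $\Omega^1,\Omega^2$ in~\eqref{omega1-2}. I verify (\emph{initialization}) $\Omega_o^+\subseteq\Omega_0^1$ and $\Omega_o^-\subseteq\Omega_0^2$, and (\emph{propagation}) that $\omegaone$ is a generalized superflow while $((\Omega_t^2)^c)_{t\in(0,T)}$ is a generalized subflow, both with normal velocity $-\alpha$. Once these are in place, Corollary~\ref{Cor levelset-flow} applied with $(\Gamma_o,\Omega_o^+,\Omega_o^-)$ and its level-set evolution via~\eqref{PCfront} (well-posed for piecewise continuous $\alpha$ by \cite{dzs1}) gives $D_t^+\subseteq\Omega_t^1\subseteq D_t^+\cup\Gamma_t$ and the analogous chain for $\Omega^2$; this translates into locally uniform convergence $u^\gep\to 1$ on $\{u>0\}$ and $u^\gep\to -1$ on $\{u<0\}$. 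Under the no-interior hypothesis, part~(ii) of the corollary upgrades these inclusions to equalities and yields the sharper statement on $\{u>0\}$ and $\overline{\{u>0\}}^c$.

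For initialization I would follow an idea of Chen \cite{xce}. Fix $x_0\in\Omega_o^+$, i.e.\ $g(x_0)>\overline m(x_0)$, and use \eqref{eqestf} to choose a bistable $\overline f$ with unstable zero $m_1\in(\overline m(x_0),g(x_0))$ that dominates $f^\gep$ on a neighborhood $K_1\times[-1,1]$. Since $\overline f$ is independent of $\gep$, the PDE with nonlinearity $\overline f$ admits a spatially localized ODE-type subsolution initialized slightly above $m_1$ and driven by $-\overline f/\gep$ to values arbitrarily close to $1$ on a time-scale $\gep|\log\gep|$; this subsolution lies below $g$ at $t=0$. Comparison with \eqref{asymptotic-probl} gives $\liminfstard u^\gep(x_0,t)=1$ for every $t>0$, i.e.\ $x_0\in\Omega_0^1$. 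The symmetric argument with $\underline f$ gives $\Omega_o^-\subseteq\Omega_0^2$.

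The heart of the proof is propagation. Given a smooth test $\phi$ satisfying conditions~(i)--(iv) of Definition~\ref{generalized flow} for the superflow property at some $(x_0,t,r,h)$, I argue by contradiction: failure of the superflow property produces $(\hat x,\hat s)\in B(x_0,r)\times(t,t+h)$ with $\phi(\hat x,\hat s)>0$ but $\liminfstard u^\gep(\hat x,\hat s)<1$. The natural ansatz is the traveling-wave barrier
\begin{equation*}
w^\gep(y,\tau)=q^{\gep,\delta}\!\left(\frac{\phi(y,\tau)-\eta(\tau-t)}{\gep},y\right)-\mu,
\end{equation*}
with parameters $\delta,\eta,\mu>0$ small. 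Conditions~(iii)--(iv), together with \eqref{eqestdelta} and the asymptotic values $m_\pm^{\gep,\delta}$, give $w^\gep\leq u^\gep$ on the parabolic boundary of $B(x_0,r]\times[t,t+h]$. Differentiating and using the traveling-wave equation \eqref{travelling-wave eq} with velocity $c^{\gep,\delta}$, the residual in \eqref{asymptotic-probl} reduces to
\begin{equation*}
\frac{q^{\gep,\delta}_r}{\gep}\bigl(\phi_t+c^{\gep,\delta}|D\phi|-\eta\bigr)+O(\gep^{1-2k}),
\end{equation*}
which is nonpositive on the cylinder because condition~(i) gives $\phi_t+\alpha^*|D\phi|<0$, one has $c^{\gep,\delta}\leq\alpha^*+o(1)$ locally uniformly (see below), and $\eta>0$. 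Parabolic comparison then yields $u^\gep\geq w^\gep$; evaluating at $(\hat x,\hat s)$ and letting first $\gep\to 0$ and then $\delta,\mu\to 0$ contradicts the assumption, since $w^\gep(\hat x,\hat s)\to m_+^{\gep,\delta}-\mu\to 1^-$. The subflow property is symmetric, using $\alpha_*$, $\underline f$, and a reflected profile.

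The principal obstacle is the discontinuity of $\alpha$ on $\tilde\Gamma$, which breaks the local uniform convergence $c^\gep\to\alpha$ underlying \cite{bs,bdl}. Following an idea from \cite{dlks}, I replace $\alpha$ by its semicontinuous envelopes on the appropriate side: for the superflow, $c^\gep<n_2$ from \eqref{c epsilon estimate} together with $n_2=\alpha^*$ on $\tilde\Gamma$ by \eqref{alpha def} and continuity of $n_2$ yields $c^\gep\leq\alpha^*+o(1)$ locally uniformly, so the strict inequality in~(i) of Definition~\ref{generalized flow} absorbs the slack; symmetrically $c^\gep\geq n_1$ drives the subflow via $\alpha_*-o(1)$. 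A secondary technical point is the blow-up $\|D^j c^\gep\|_\infty\sim\gep^{-jk}$ permitted by \eqref{eqasscgep}; combined with \eqref{travelling wave properties}, the parabolic error $\gep\,\Delta w^\gep$ becomes $O(\gep^{1-2k})$, which is $o(1)$ precisely under the constraint $k\leq 1/2$ in \eqref{fproperty2}. It is this scaling balance that allows the discontinuous front geometry to emerge in the limit while keeping the traveling-wave barriers valid.
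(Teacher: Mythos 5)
Your overall architecture (the three steps plus Corollary \ref{Cor levelset-flow}) matches the paper, but the two technical cores have genuine gaps. The more serious one is in propagation, precisely where the discontinuity of $\alpha$ must be handled. Your argument needs, on the whole cylinder $B(x_0,r]\times[t,t+h]$, a bound of the form $c^{\gep,\delta}\leq\alpha^*+o(1)$ \emph{uniformly}, and the justification you give ($c^\gep<n_2$ and $n_2=\alpha^*$ on $\tilde\Gamma$) does not yield it: on the side $\{\tilde d<0\}$ one has $\alpha^*=n_1$, while inside the transition layer of width $\sim\gep^k$ (see the model (\ref{eqcgepa})) $c^\gep$ is close to $n_2$, so $c^\gep-\alpha^*$ stays of order $n_2-n_1$ there, no matter how small $\gep$ is. What is true is only $\limsupstar c^\gep=\alpha^*$ in the half-relaxed sense, and this is too weak to close your barrier inequality when the cylinder meets $\tilde\Gamma$. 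The paper's way around this is Proposition \ref{equivalent def of generalized flow}: one proves (via stability of viscosity supersolutions and Theorem \ref{flow-viscositysol}) that being a superflow with velocity $-\alpha$ is \emph{equivalent} to being a superflow with velocity $-\overline c$ for every member of the family $\overline{\mathcal F}$ of continuous one-sided majorants $\overline c^\gep\geq\alpha^*$, which also dominate $c^{\gep'}$ on the compact cylinder for $\gep'$ small; the test function is then assumed to satisfy (i) of Definition \ref{generalized flow} with this fixed continuous $\overline c$, and the barrier argument goes through. Your sketch does not contain this reduction, and without it the step fails. A second, independent problem in the same step is the ansatz $q^{\gep,\delta}(\phi/\gep,y)$: plugging it into (\ref{asymptotic-probl}-i) and using the traveling-wave equation leaves the term $\gep^{-1}q^{\gep,\delta}_{rr}\,(1-|D\phi|^2)$, which is of order $1/\gep$ and has no sign (and the transport term comes out as $\phi_t+c^{\gep,\delta}$, not $\phi_t+c^{\gep,\delta}|D\phi|$), so the residual you state is not what the computation gives. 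This is why the paper (following \cite{bs,bdl}) builds the barrier from the \emph{signed distance} $d$ to $\{\phi(\cdot,s)=0\}$, where $|Dd|=1$, and then extends it to the whole cylinder by the $\sup(\cdot,-1)$ truncation and the cutoff blending with $1-\beta$ (Lemmas \ref{initialization second lemma}, \ref{lemma propagation}); the boundary comparison on the parabolic boundary also needs the compactness argument converting $\liminfstard u^\gep=1$ on the sets in (iii)--(iv) into the uniform bound $u^\gep\geq1-\beta$ for small $\gep$, which you assert but do not supply.

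The initialization step is also incomplete. The Chen-type ODE comparison only gives $u^\gep\geq1-\beta$ on $\{d(\cdot,0)\geq\beta\}$ at the single time $t_\gep=\tau\gep$ (the spatial cutoff in the subsolution degrades like $-Kt$, so the construction is useless beyond times of order $\gep$). To conclude $\hat x\in\Omega^1_0$ you need a full space-time neighborhood $B(\hat x,\bar\eta)\times(0,\bar t)$ inside $\Omega^1$, i.e.\ persistence of the generated interface over an $\gep$-independent time interval; this is exactly the content of the paper's Lemma \ref{initialization second lemma} (a traveling-wave subsolution attached to the shrinking ball $\Phi=r^2-|x-\hat x|^2-Ct$), which your proposal skips. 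Moreover, your claim that comparison gives $\liminfstard u^\gep(x_0,t)=1$ \emph{for every} $t>0$ is false as stated: for later times the front may sweep over $x_0$ and $u^\gep(x_0,t)\to-1$; only smallness of $t$ (plus the persistence barrier) is available, and that is all that is needed for the trace $\Omega^1_0$.
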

\begin{rem}
The results of the theorem are more elegant in the case that the initialized front $\Gamma_o$  has empty interior. In the open sets where $\overline m=\underline m=m$, then the family $\{m_o^\gep\}$ converges locally uniformly, and $\Gamma_o$ is determined by the equation $g=m$. If this is not the case, $\Gamma_o$ may contain relatively open subsets of $\{x:\overline m(x)>\underline m(x)\}$. Notice  also that in the case (\ref{fepsilon defa}) then $\overline m=\frac{\alpha^*}2$ and $\underline m=\frac{\alpha_*}2$, therefore even in that case it is preferable to have a set of discontinuities of $\alpha$ with empty interior.
\end{rem}
\begin{proof} The proof will take up the rest of the section and will be divided into a series of statements. Following the abstract method described in the previous section we define two families of open sets of $\RRn$, $\omegaone$ and $\omegatwo$ as in (\ref{omega1-2}), (\ref{omega1-2t}) and two further sets $\Omega^1_0$, $\Omega^2_0$ as in ($\ref{omega0}$). We recall that by maximum principle $-1\leq u^\gep\leq1$.

\emph{First step: initialization.} We want to show that,
$$\Omega_0^+=\{d_o>0\}\subseteq\Omega_0^1,\qquad\Omega_0^-=\{d_o<0\}\subseteq\Omega_0^2.$$
Since the proofs of these two inclusions are similar we only show the first one.
Consider $\hat x\in\{d_o>0\}$, then we have that $g(\hat x)>\overline m(\hat x)$ and so, by the continuity of $g$, upper semicontinuity and definition of $\overline m$, we can find an $r,\sigma>0$ such that
$$g(x)\geq\sup_{B(\hat x,r)}\overline m+\sigma\geq m_o^\gep(x)+\frac34\sigma,$$
for all $x\in B(\hat x,r)$ and $\gep$ sufficiently small. This means that
\begin{equation}\label{lower estimate uepsilon}
u^\varepsilon(x,0)=g(x)\geq\Big(\sup_{B(\hat x,r)}\overline m+\sigma\Big)\mathds1_{B(\hat x,r)}(x)-\mathds1_{B(\hat x,r)^c}(x).
\end{equation}
Now we introduce the function $\Phi:\RRn\times[0,T]\to\RR$ defined by
\begin{equation}\label{Phi def}
\Phi(x,t)=r^2-|x-\hat x|^2-Ct,
\end{equation}
with $C>0$ a constant that will be chosen later. We denote by $d(\cdot,t)$ the signed distance to the set $\{\Phi(\cdot,t)=0\}$ defined in such a way to have the same sign of $\Phi$. Explicitly $d(x,t)=\sqrt{(r^2-Ct)^+}-|x-\hat x|$. Note in particular that $d(x,0)\geq\beta(>0)$ if and only if $x\in B(\hat x,r-\beta]$.

To prove the first step we need the two following lemmas.
\begin{lem}\label{initialization first lemma}Under the assumptions of Theorem \ref{thm} we have that for any $\beta>0$ there exist $\tau=\tau(\beta)>0$ and $\bar\varepsilon=\bar\varepsilon(\beta)$ such that, for all $0<\varepsilon\leq\bar\varepsilon$, we have
$$u^\varepsilon(x,t_\varepsilon)\geq(1-\beta)\mathds1_{\{d(\cdot,0)\geq\beta\}}(x)-\mathds1_{\{d(\cdot,0)<\beta\}}(x),\quad x\in\RRn,$$
where $t_\varepsilon=\tau\varepsilon$ and $d(x,t)=\sqrt{(r^2-Ct)^+}-|x-\hat x|$.
\end{lem}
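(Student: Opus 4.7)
On the short time scale $t_\varepsilon=\tau\varepsilon$ the diffusion in (\ref{asymptotic-probl}-i) only propagates information on the spatial scale $\sqrt{\varepsilon t_\varepsilon}=O(\varepsilon)\ll\beta$, while the reaction acts essentially as a pointwise ODE. By (\ref{lower estimate uepsilon}), $u^\varepsilon(\cdot,0)$ sits strictly above the unstable equilibrium $m_o^\varepsilon$ on $B(\hat x,r)$, so the ODE drives the solution there toward the upper stable equilibrium $+1$. The strategy is to make this rigorous by constructing a subsolution $V^\varepsilon$ of (\ref{asymptotic-probl}-i) that reproduces this dichotomy, and applying the parabolic comparison principle.

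\emph{Step 1 (ODE input).} Put $q_0:=\sup_{B(\hat x,r)}\overline m+\sigma$, so that (\ref{lower estimate uepsilon}) reads $u^\varepsilon(\cdot,0)\geq q_0\mathds{1}_{B(\hat x,r)}-\mathds{1}_{B(\hat x,r)^c}$ with $q_0>m_o^\varepsilon(x)+\sigma/2$ on $B(\hat x,r)$ for $\varepsilon$ small. Apply (\ref{eqestf}) on $K_1:=\overline{B(\hat x,r)}$ with $m_1\in(\sup_{K_1}\overline m,q_0)$ to obtain a bistable $\overline f$ with zeros $\{-1,m_1,1\}$ and $f^\varepsilon\leq\overline f$ on $K_1\times[-1,1]$. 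The scalar ODE $W'=-\overline f(W,\hat x)$, $W(0)=q_0$, monotonically increases to $1$; pick $\tau=\tau(\beta)$ so that $W(\tau)\geq 1-\beta/2$ and set $t_\varepsilon=\tau\varepsilon$.

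\emph{Step 2 (Subsolution and comparison).} Using (\ref{eqestdelta}) pick $\delta\in(0,\bar\delta)$ so small that $m_+^{\varepsilon,\delta}(\hat x)\geq 1-\beta/2$ and $m_-^{\varepsilon,\delta}(\hat x)\leq -1$ for all $\varepsilon$ small. Choose a constant $K>\sup_\varepsilon c^{\varepsilon,\delta}(\hat x)+2(n-1)/r$, set $R(t):=r-Kt$, and define
\[
V^\varepsilon(x,t):=\min\!\Big\{W(t/\varepsilon),\;q^{\varepsilon,\delta}\!\Big(\tfrac{R(t)-|x-\hat x|}{\varepsilon},\hat x\Big)\Big\}.
\]
The plateau $W(t/\varepsilon)$ is a classical subsolution of (\ref{asymptotic-probl}-i) on $K_1\times[0,t_\varepsilon]$ since $\varepsilon^{-1}(f^\varepsilon(W,x)-\overline f(W,\hat x))\leq 0$; the shrinking spherical traveling-wave piece is a subsolution by a direct calculation from (\ref{travelling-wave eq}), because the residual reduces to $q^{\varepsilon,\delta}_r\cdot[R'+c^{\varepsilon,\delta}(\hat x)+(n-1)\varepsilon/|x-\hat x|]-\delta$, which is nonpositive in its active region by the choice of $K$. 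The minimum of two subsolutions is a viscosity subsolution. The initial bound $V^\varepsilon(\cdot,0)\leq g$ follows from (\ref{lower estimate uepsilon}) together with (\ref{travelling wave properties a}) and (\ref{eqestdelta}); at time $t_\varepsilon$, for $|x-\hat x|\leq r-\beta$ the traveling-wave argument is $\geq\beta/(2\varepsilon)$, so by (\ref{travelling wave properties a}) and the choice of $\delta$ one has $q^{\varepsilon,\delta}\geq 1-\beta$, and combined with $W(\tau)\geq 1-\beta/2$ this yields $V^\varepsilon(x,t_\varepsilon)\geq 1-\beta$. The parabolic comparison principle gives $u^\varepsilon\geq V^\varepsilon$, which is the required estimate on $B(\hat x,r-\beta]\times\{t_\varepsilon\}$; outside this set, $u^\varepsilon(\cdot,t_\varepsilon)\geq -1$ follows from the maximum principle applied to (\ref{asymptotic-probl}) since $\pm 1$ are equilibria of the reaction term.

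\emph{Main obstacle.} The technical heart is verifying the subsolution inequality for the shrinking spherical traveling wave. The radial Laplacian introduces a curvature correction $(n-1)\varepsilon/|x-\hat x|\cdot q^{\varepsilon,\delta}_r$ that must be absorbed by choosing $K>\sup c^{\varepsilon,\delta}$, and the spatial dependence of $f^\varepsilon$ captured by (\ref{fproperty2}) produces a further error of the form $f^\varepsilon(q^{\varepsilon,\delta},x)-f^\varepsilon(q^{\varepsilon,\delta},\hat x)$ in the active region which must be controlled by the perturbation margin $\delta$ from (\ref{eqestdelta}) together with the quantitative traveling-wave estimates (\ref{travelling wave properties b}), (\ref{travelling wave properties}). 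This balancing is standard in the Barles-Souganidis framework \cite{bs,bdl} adopted in the paper.
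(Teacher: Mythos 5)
Your construction has genuine gaps, and it also attacks the wrong difficulty: Lemma \ref{initialization first lemma} is a pure ``generation of interface'' statement on the time scale $t_\varepsilon=\tau\varepsilon$, and the paper proves it with Chen's ODE argument alone, taking as barrier $\chi\big(\tfrac t\varepsilon,\psi(d(x,0))-Kt,x\big)$, where $\chi$ is the flow of $\dot\chi+f^\varepsilon(\chi,x)=0$ and $\psi$ is a smooth cutoff equal to $-1$ off the ball; the drift $-Kt$ in the $\xi$-slot, together with $\chi_\xi>0$ and the derivative bounds ($\chi3$) coming from (\ref{fproperty2}) with $k\le\tfrac12$, absorbs all the diffusion errors, and no traveling wave is needed. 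Your Step 1 reproduces the ODE input (essentially ($\chi2$)), but your Step 2 imports the traveling-wave machinery of Lemma \ref{initialization second lemma}, and there the proposal breaks down in three places. First, the assertion ``the minimum of two subsolutions is a viscosity subsolution'' is false for second-order parabolic operators (it is the maximum of subsolutions, or the minimum of supersolutions, that passes): along the surface where the plateau $W(t/\varepsilon)$ and the wave piece cross, $V^\varepsilon$ has a concave kink, and smooth test functions touching it from above there (e.g.\ of the type $-a\,\mathrm{dist}^2$) violate the subsolution inequality; the paper avoids exactly this by the smooth interpolation $\psi(d)\bar v^\varepsilon+(1-\psi(d))(1-\beta)$ and a separate verification in the gluing zone using $f^\varepsilon_{qq}(1,x)>0$ and $f^\varepsilon(1-\beta,x)<0$.

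Second, the initial comparison $V^\varepsilon(\cdot,0)\le g$ fails near $\partial B(\hat x,r)$: estimate (\ref{lower estimate uepsilon}) only gives $g\ge-1$ there, while your unshifted wave has argument $(r-|x-\hat x|)/\varepsilon$ and so takes values near the unstable zero $m_o^{\varepsilon,\delta}(\hat x)\ge\rho$ on an annulus of width $O(\varepsilon|\ln\varepsilon|)$ around the boundary. This is precisely why the paper's barriers shift the wave horizontally by $2\beta$ in the argument and vertically by $-2\beta$, which yields $\le-1$ on $\{d<\beta\}$. Third, your subsolution check for the wave frozen at $\hat x$ drops the term $\varepsilon^{-1}\big[f^\varepsilon(q^{\varepsilon,\delta},x)-f^\varepsilon(q^{\varepsilon,\delta},\hat x)\big]$, which you cannot control by ``the margin $\delta$'': since $c^\varepsilon$ varies by an amount $O(1)$ across $B(\hat x,r)$ when $\tilde\Gamma$ meets the ball, this term is $O(1/\varepsilon)$ in the transition layer, and near the stable branch it is of size comparable to $CM\delta/\varepsilon$ plus $Ce^{-bz}/\varepsilon$, which the single good term $-\delta/\varepsilon$ does not dominate when $CM\ge1$; in the paper this is absorbed by the extra term $-2\beta f^\varepsilon_q(q^{\varepsilon,\delta},x)/\varepsilon$ produced by the vertical shift, using $f_q^\varepsilon\ge\gamma$ near $\pm1$ and the lower bound on $q_r^{\varepsilon,\delta}$ in the transition region (and the curvature term is handled only on $\{|x-\hat x|\ge\gamma\}$, the center being capped by a constant). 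If you want to salvage your route, you must add both shifts, replace the min by the paper's cutoff gluing, and restrict the wave piece to an annulus; at that point you have rewritten Lemma \ref{initialization second lemma}, while the statement at hand follows more simply from the ODE barrier.
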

\begin{lem}\label{initialization second lemma}There exist $\bar h=\bar h(r,\hat x)>0$, $\bar\beta=\bar\beta(r,\hat x)$ independent of $\gep$ such that if $\beta\leq\bar\beta$ and $\varepsilon\leq\bar\varepsilon(\beta)$, then there is a subsolution $\omega^{\varepsilon,\beta}$ of (\ref{asymptotic-probl}-i) in $\RRn\times(0,\bar h)$ that satisfies
$$\omega^{\varepsilon,\beta}(x,0)\leq(1-\beta)\mathds1_{\{d(\cdot,0)\geq\beta\}}(x)-\mathds1_{\{d(\cdot,0)<\beta\}}(x),\quad x\in\RRn.$$
If  moreover $(x,t)\in B(\hat x,r)\times(0,\bar h)$ and $d(x,t)>3\beta$, then
$$\liminfstard\omega^{\varepsilon,\beta}(x,t)\geq1-3\beta.$$
\end{lem}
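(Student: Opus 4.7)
The plan is to build $\omega^{\varepsilon,\beta}$ from the slightly perturbed travelling-wave profile $q^{\varepsilon,\delta}$ composed with a shifted signed distance to the shrinking ball from Lemma \ref{initialization first lemma}. Setting $R(t) := \sqrt{r^2-Ct}$ and $d(x,t) := R(t) - |x - \hat x|$, I take
\begin{equation*}
\omega^{\varepsilon,\beta}(x,t) := q^{\varepsilon,\delta}\!\left(\tfrac{d(x,t)-\beta}{\varepsilon}-A,\; x\right),
\end{equation*}
with three parameters to tune. First, pick $\delta = \delta(\beta) > 0$ of order $\beta$ so that $m_+^{\varepsilon,\delta}(x) \in [1 - 3\beta,\, 1 - \beta]$ uniformly in $x,\varepsilon$; this is feasible because, by (\ref{eqestdelta}) and the lower bound $f^\varepsilon_q \geq \gamma$ near $q = 1$ from (\ref{fproperty}), $1 - m_+^{\varepsilon,\delta}$ is comparable to $\delta$. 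Second, pick $A = A(\beta) > 0$, independent of $\varepsilon$, so large that $q^{\varepsilon,\delta}(-A,x) \leq -1$ uniformly; the analogue of (\ref{travelling wave properties a}) combined with $-1 - m_-^{\varepsilon,\delta} \asymp \delta$ shows $A = O(\log(1/\beta))$ suffices. Third, pick $C = C(r) > 0$ large enough, and set $\bar h := r^2/(2C)$, so that $d_t(x,t) + c^{\varepsilon,\delta}(x) \leq -\mu$ uniformly on $\RRn \times [0,\bar h]$ for some $\mu = \mu(r) > 0$.

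The initial-data bound and the asymptotic lower bound both fall out of the monotonicity of $q^{\varepsilon,\delta}$ in its first argument. If $d(x,0) < \beta$ the first argument is strictly less than $-A$, so $\omega^{\varepsilon,\beta}(x,0) < q^{\varepsilon,\delta}(-A,x) \leq -1$; if $d(x,0) \geq \beta$, then $\omega^{\varepsilon,\beta}(x,0) \leq m_+^{\varepsilon,\delta}(x) \leq 1 - \beta$. For the asymptotic bound, if $(x,t) \in B(\hat x, r) \times (0,\bar h)$ with $d(x,t) > 3\beta$, then for $(y,s)$ in any sufficiently small neighborhood $d(y,s) > 2\beta$, so the first argument is $\geq \beta/\varepsilon - A \to +\infty$, and (\ref{travelling wave properties a}) yields $\omega^{\varepsilon,\beta}(y,s) \to m_+^{\varepsilon,\delta}(y) \geq 1 - 3\beta$ uniformly, hence $\liminfstard \omega^{\varepsilon,\beta}(x,t) \geq 1 - 3\beta$.

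The technical core is the subsolution check. At points $x \neq \hat x$, $d$ is smooth, and a chain-rule computation using $|Dd| = 1$ together with the travelling-wave identity $f^\varepsilon(q^{\varepsilon,\delta},x) = q^{\varepsilon,\delta}_{rr} + c^{\varepsilon,\delta}(x) q^{\varepsilon,\delta}_r - \delta$ reduces the subsolution inequality to
\begin{equation*}
q_r^{\varepsilon,\delta}\bigl(d_t + c^{\varepsilon,\delta} - \varepsilon\Delta d\bigr) - 2\varepsilon\, Dd \cdot Dq_r^{\varepsilon,\delta} - \varepsilon^2 \Delta_x q^{\varepsilon,\delta} - \delta \leq 0.
\end{equation*}
By (\ref{travelling wave properties}) the two spatial-cross terms are $O(\varepsilon^{1-k})$ and $O(\varepsilon^{2-2k})$, hence absorbed by $-\delta/2$ for $\varepsilon$ small. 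The main obstacle is the singularity $\Delta d = -(n-1)/|x-\hat x|$ at $\hat x$, which I plan to handle by splitting space: for $|x-\hat x| \geq \varepsilon^{1/2}$ the term $|\varepsilon\Delta d| \leq (n-1)\varepsilon^{1/2}$ is negligible and the bracket is dominated by $d_t + c^{\varepsilon,\delta} \leq -\mu$, making the $q_r^{\varepsilon,\delta}$-term non-positive; for $|x-\hat x| < \varepsilon^{1/2}$ (and $\beta \leq \bar\beta(r)$ small enough) the first argument of $q^{\varepsilon,\delta}$ is at least $c(r)/\varepsilon$, so $q_r^{\varepsilon,\delta}$ is of order $e^{-bc(r)/\varepsilon}$ by (\ref{travelling wave properties a}) and dominates the algebraic factor $\varepsilon/|x-\hat x|$. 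Finally, at the single point $x = \hat x$ where $d$ fails to be $C^1$, I will verify the viscosity subsolution condition via the smooth test function
\begin{equation*}
\phi(x,t) := q^{\varepsilon,\delta}\!\left(\tfrac{R(t)-\beta}{\varepsilon} - A,\; x\right),
\end{equation*}
which touches $\omega^{\varepsilon,\beta}$ from above at $(\hat x,t_0)$ by the monotonicity of $q^{\varepsilon,\delta}$; the subsolution inequality then reduces to $q^{\varepsilon,\delta}_r(R'(t_0) + c^{\varepsilon,\delta}(\hat x)) + q^{\varepsilon,\delta}_{rr} - \delta \leq O(\varepsilon^{2-2k})$, which follows from $R'(t_0) + c^{\varepsilon,\delta}(\hat x) \leq -\mu$ together with the exponential smallness of $q_r^{\varepsilon,\delta}$ and $q_{rr}^{\varepsilon,\delta}$ at the first argument of order $1/\varepsilon$.
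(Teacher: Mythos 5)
Your construction fails precisely at the point the paper's cut-and-paste construction is designed to avoid: the center $\hat x$, where $\Delta d=-(n-1)/|x-\hat x|$ blows up. In the subsolution computation the singular term enters with the bad sign, as $+\,q_r^{\varepsilon,\delta}\,\varepsilon\,(n-1)/|x-\hat x|$, and for \emph{fixed} $\varepsilon$ this is unbounded as $x\to\hat x$ while $q_r^{\varepsilon,\delta}$ evaluated at the argument $\approx (R(t)-\beta)/\varepsilon-A$ is a fixed strictly positive number; no smallness of $q_r^{\varepsilon,\delta}$ (exponential or otherwise) can dominate a factor that tends to $+\infty$, so the classical inequality is violated on a small punctured ball $0<|x-\hat x|\lesssim \varepsilon q_r^{\varepsilon,\delta}/\delta$. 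Your inner/outer splitting does not repair this (the inner region contains points arbitrarily close to $\hat x$, where your bound ``$q_r$ dominates $\varepsilon/|x-\hat x|$'' is false), and the viscosity test with $\phi(x,t)=q^{\varepsilon,\delta}((R(t)-\beta)/\varepsilon-A,x)$ only addresses the single point $x=\hat x$; since $\omega^{\varepsilon,\beta}$ is smooth on the punctured ball, a classical violation there is a genuine failure of the viscosity subsolution property, and the comparison with $u^\varepsilon$ cannot be run. A secondary issue: your argument leans on exponential decay of $q_r^{\varepsilon,\delta}$ and on derivative bounds for $q^{\varepsilon,\delta}$ valid uniformly on all of $\RRn$, whereas the paper only assumes (\ref{travelling wave properties b}) (algebraic decay $q_r=O(1/(1+|r|))$) and (\ref{travelling wave properties}) on compact sets in $x$; a single global formula on $\RRn$ is therefore not covered by the hypotheses.

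This is exactly why the paper's proof does not use the wave profile near the center or at infinity: it takes $v^\varepsilon=q^{\varepsilon,\delta}((d-2\beta)/\varepsilon,x)-2\beta$ only on the compact tubular region $Q_{\gamma,\bar h}$ with $|x-\hat x|\ge\gamma$ (so $\Delta d$ is bounded and the locally uniform estimates suffice), truncates from below by $-1$ outside the ball, and near $\hat x$ (where $d\ge\gamma$) replaces the profile by the constant $1-\beta$, which is a subsolution simply because $\sup_x f^\varepsilon(1-\beta,x)<0$; the interpolation band $\gamma/2\le d\le 3\gamma/4$ is handled using the convexity of $f^\varepsilon$ near $q=1$ ($f^\varepsilon_{qq}(1,x)>0$), $d_t\le0$, and the vertical shift $-2\beta$, which guarantees $\bar v^\varepsilon-(1-\beta)\le-\beta$ there. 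If you want to salvage your approach, you must incorporate some such capping of the profile away from the interface (e.g.\ the paper's $\psi(d)$-interpolation with $1-\beta$); your choices of $\delta\sim\beta$ and of the shift $A$ for the initial-data and $\liminf$ bounds are fine, but they do not address the core difficulty.
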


Before proving Lemmas \ref{initialization first lemma} and \ref{initialization second lemma} we give the short conclusion of the first step which follows \cite{bdl}. To do this, we first notice that, combining these two Lemmas, we get the existence of a viscosity subsolution $\omega^{\varepsilon,\beta}$ of (\ref{asymptotic-probl}-i) in $\RRn\times(0,\bar h)$ such that
$$\omega^{\varepsilon,\beta}(x,0)\leq u^\varepsilon(x,t_\varepsilon),\quad\mbox{for all }x\in\RRn,$$
and so, by the maximum principle,
$$\omega^{\varepsilon,\beta}(x,s)\leq u^\varepsilon(x,s+t_\varepsilon),\quad\mbox{for all }(x,s)\in\RRn\times[0,\bar h].$$
Therefore, using the second part of Lemma \ref{initialization second lemma}, we get that for all $(x,s)\in B(\hat x,r)\times(0,\bar h)$, $d(x,s)>3\beta$,
$$\liminfstard u^\varepsilon(x,s)\geq1-3\beta.$$
Since $\beta$ is arbitrary and does not depend on $\bar h$ we can send it to zero in order to obtain that, for all $(x,s)\in B(\hat x,r)\times(0,\bar h)$, $d(x,s)>0$,
$$\liminfstard u^\varepsilon(x,s)\geq1,$$
i.e. $x\in\Omega^1_s$ by definition. 

Moreover, by definition of $d$, it follows that there exist $\bar\eta<r$, $\bar t<\bar h$ so that $B(\hat x,\bar\eta)\subset\{d(\cdot,t)>0\}$ for any $0<t<\bar t$. This implies that $B(\hat x,\bar\eta)\subset\Omega^1_t$ for any $0<t<\bar t$ and therefore $\chi(\hat x,0)=1$ and $\hat x\in \Omega_0^1$.

\begin{proof}[Proof of Lemma $\ref{initialization first lemma}$] For the proof of this lemma we follow the ideas of Chen \cite {xce,xc}, based on the fact that for $\gep$ small in the reaction diffusion equation the diffusion term is negligible for short time, and of Barles-Da Lio \cite{bdl}.
The lemma is a local short time generation of the interface. The corresponding proof in \cite{xce} is more precise since there the time needed to generate the interface is precisely determined. Let $\beta>0$ be fixed. Due to the maximum principle we just need to show that $u^\gep(x,t_\gep)\geq1-\beta$ if $d(x,0)\geq\beta$. 
\\
1. 
We denote by $\chi=\chi(\tau,\xi;x)\in C^2([0,+\infty)\times\RR\times\RRn)$ the solution of
\begin{equation}\label{ordinary equation chi}
\left\{
\begin{array}{l}
\dot\chi(\tau,\xi;x)+f^\varepsilon(\chi(\tau,\xi;x),x)=0,\quad \tau>0,\\
\chi(0,\xi;x)=\xi.
\end{array}
\right.
\end{equation}
It is then simple to see, by the properties of ordinary differential equations, that $\chi$ satisfies the following properties
\begin{equation}\tag{$\chi1$}\label{property one chi}
\chi_\xi(\tau,\xi;x)>0,\quad\mbox{ in }[0,+\infty)\times\RR\times\RRn,
\end{equation}
and there exists $\tau_o=\tau_o(\beta)>0$ such that, for all $\tau\geq\tau_o$
\begin{equation}\tag{$\chi2$}\label{property two chi}
\begin{array}{l}
\chi(\tau,\xi;x)\geq1-\beta,\quad\forall\,\xi\geq\sup_{B(\hat x,r)}\overline m+\frac\sigma2.
\end{array}
\end{equation}
(Regarding the proof of the estimate in (\ref{property two chi}), which is independent of $\varepsilon$ and $x$, we just notice that we can choose a cubic-like function $\overline f$ as in (\ref{eqestf}) with $K=B(\hat x,r],\;m_1=\sup_{B(\hat x,r)}\overline m+\frac\sigma4$ such that
$$\overline f(q)\geq f^\gep(q,x),$$
for all $x\in B(\hat x,r)$, $q\in[-1,1]$, and $\gep$ sufficiently small.)\\
Moreover, since for any $C>1$ we have that $\chi(\tau,\xi,x)\in[-C,C]$ for all $\xi\in[-C,C]$, $\tau\geq0$, $x\in\RRn$, it also holds that
for any $C>1$, $\tau>0$ there exists a constant $M_{C,\tau}>0$ such that
\begin{equation}\tag{$\chi3$}\label{property three chi}
\begin{array}{l}
|\chi_{\xi\xi}(\tau,\xi;x)|\leq M_{C,\tau}\chi_\xi(\tau,\xi;x),\quad
|\chi_{x_i}(\tau,\xi;x)|,\leq \frac{M_{C,\tau}}{\gep^k}\\
|\chi_{\xi x_i}(\tau,\xi;x)|\leq \frac{M_{C,\tau}}{\gep^k}\chi_\xi(\tau,\xi;x),\quad
|\chi_{x_ix_i}(\tau,\xi;x)|\leq \frac{M_{C,\tau}}{\gep^{2k}}\chi_\xi(\tau,\xi;x),
\end{array}
\end{equation}
for any $\xi\in[-C,C],\,x\in\RRn,\,i\in\{1,2,\cdots,n\}$ and $\gep$ small enough.

2. Let $\psi$ be a nondecreasing smooth function in $\RR$ such that
$$\psi(z)=\left\{\begin{array}{ll}
-1&\mbox{if }z\leq 0,\\
\sup_{B(\hat x,r)}\overline m+\sigma&\mbox{if }z\geq\beta\wedge\frac{\sigma}{2}.
\end{array}\right.
$$
We can define a function $\underline u^\varepsilon$ in $\RRn\times[0,T]$ as
$$\underline u^\varepsilon(x,t)=\chi\Big(\frac{t}{\varepsilon},\psi(d(x,0))-Kt,x\Big),$$
for $K$ a constant to be decided later.
Thanks to a computation similar to those in \cite{bs} one can prove that, if $K$ is large enough, $\underline u^\varepsilon$ is a subsolution of (\ref{asymptotic-probl}-i) in $\RRn\times(0,\tau_o\varepsilon)$, with $\tau_o$ as in (\ref{property two chi}). In fact, since $\chi$ satisfies (\ref{ordinary equation chi}) and $\psi'$ has compact support, we obtain
\begin{equation}\label{computation}
\begin{split}
\underline u_t^\varepsilon-\varepsilon\Delta\underline u^\varepsilon+\frac{f^\varepsilon(\underline u^\varepsilon,x)}{\varepsilon}&=\frac{\dot\chi}{\varepsilon}-K\chi_\xi-\varepsilon\Big[\chi_{\xi\xi}\bigabs{{\psi}' Dd(x,0)}^2\\
&+\chi_\xi\Big(\psi''+\psi'\Delta d(x,0)\Big)+\Delta\chi
+2D\chi_\xi\cdot\Big(\psi'Dd(x,0)\Big)\Big]+\frac{f^\varepsilon(\chi,x)}{\varepsilon}\\
&\leq-K\chi_\xi+\varepsilon[M_1\abs{\chi_{\xi\xi}}+M_2\chi_\xi+\abs{\Delta\chi}+M_3\abs{D\chi_\xi}].
\end{split}
\end{equation}
Now we want to use properties (\ref{property one chi}) and (\ref{property three chi}) in order to get an estimate for the terms $\abs{\chi_{\xi\xi}}$, $\abs{D\chi_\xi}$, $\abs{\Delta\chi}$. Indeed since $\psi(d(x,0))\in I=[-1,1+\sigma]$ for all $x\in\RRn$, by evaluating (\ref{computation}) at a point of $\RRn\times(0,\tau_o\varepsilon)$ we obtain
\begin{equation*}
\underline u_t^\varepsilon-\varepsilon\Delta\underline u^\varepsilon+\frac{f^\varepsilon(\underline u^\varepsilon,x)}{\varepsilon}
\leq-\chi_\xi\left(K-\varepsilon M_2-\varepsilon M_{2,\tau_o}\left(M_1+\frac{M_3}{\gep^k}+\frac1{\gep^{2k}}\right)\right)\leq0,
\end{equation*}
for $K$ large enough. Moreover by definition of $d$,
\begin{equation*}
\begin{split}
\underline u^\varepsilon(x,0)=\psi(d(x,0))&\leq \Big(\sup_{B(\hat x,r)}\overline m+\sigma\Big)\mathds1_{\{d(x,0)>0\}}(x)-\mathds1_{\{d(x,0)\leq0\}}(x)\\
                             &= \Big(\sup_{B(\hat x,r)}\overline m+\sigma\Big)\mathds1_{B(\hat x,r)}(x)-\mathds1_{B(\hat x,r)^c}.
\end{split}
\end{equation*}
Therefore combining the last inequality with (\ref{lower estimate uepsilon}) we get
$$\underline u^\varepsilon(x,0)\leq u^\varepsilon(x,0),\mbox{ for all $x$ in }\RRn.$$
Thus, by the maximum principle,
$$\underline u^\varepsilon(x,t)\leq u^\varepsilon(x,t)\;\mbox{in }\RRn\times[0,\tau_o\varepsilon].$$
Now if we evaluate the last inequality for $x\in\{d(\cdot,0)\geq\beta\wedge\sigma/2\}$ and $t=t_\varepsilon=\tau_o\varepsilon$, we get
$$u^\varepsilon(x,t_\varepsilon)\geq\chi\big(\tau_o,\sup_{B(\hat x,r)}\overline m+\sigma-K\tau_o\varepsilon,x\big)
\geq \chi\big(\tau_o,\sup_{B(\hat x,r)}\overline m+\frac\sigma2,x\big),$$
for $\varepsilon\leq \frac\sigma{2K\tau_o}$.
Therefore by (\ref{property two chi}) and we obtain
$$u^\varepsilon(x,t_\varepsilon)\geq1-\beta,$$
for all $x\in\{d(\cdot,0)\geq\beta\}$.
\end{proof}

\begin{proof}[Proof of Lemma $\ref{initialization second lemma}$] The proof follows with some modifications the ideas in \cite{bdl} and \cite{bs}. First of all we consider the smooth function $\Phi$ defined in (\ref{Phi def}) where now $C$ is fixed and satisfies
\begin{equation}\label{C assumption}
C\geq 8r.
\end{equation}

Since $D \Phi(x,t)\neq0$ if $\Phi(x,t)=0$, there exist $\gamma,\,\bar h>0$ such that $\bar h<r^2/C$, $d$ is smooth in the set $Q_{\gamma,\bar h}=\{(x,t):\abs{(d(x,t))}\leq\gamma,\,|x-\hat x|\geq\gamma,\;0\leq t\leq\bar h\}$, and $D\Phi(x,t)\neq 0$ in $Q_{\gamma,\bar h}$. Now we construct a subsolution by steps.

1. We first define a smooth function $v^\varepsilon$ in $Q_{\gamma,\bar h}$ as
$$v^{\varepsilon}(x,t)=q^{\varepsilon,\delta}\Big(\frac{d(x,t)-2\beta}{\varepsilon},x\Big)-2\beta,$$
with $\delta\in[0,\bar\delta]$ to be chosen later.
Using the definition of $d$, the assumption (\ref{C assumption}) on $C$ and the properties (\ref{travelling wave properties}) satisfied by $\qued$ we can see that in $Q_{\gamma,\bar h}$,
\begin{equation*}
\begin{split}
v_t^{\varepsilon}-\varepsilon\Delta v^{\varepsilon}+\frac{f^\varepsilon(v^\varepsilon,x)}{\varepsilon}&=\frac{\qued_r d_t}{\varepsilon}-\frac{\qued_{rr}}{\varepsilon}-2D\qued_r\cdot Dd-q_r\Delta d-\varepsilon\Delta\qued+\frac{f^\varepsilon(\qued-2\beta,x)}{\varepsilon}\\
&\leq\frac{\qued_r}{\varepsilon}\big(\frac{-C}{2\sqrt{r^2-Ct}}+c^{\varepsilon,\delta}(x)+\gep\frac{n-1}{\abs{x-\hat x}}\big)
-\frac{\delta}{\varepsilon}-2D\qued_r\cdot Dd-\varepsilon\Delta\qued\\
&\qquad\qquad\qquad\qquad\qquad\qquad\qquad\qquad-\frac{2\beta f^\gep_q(\qued,x)}{\gep}+\frac{2\beta^2\norm{f_{qq}^\gep}_\infty}{\gep}\\
&\leq\frac{1}{\gep}\Big[-\qued_r-2\beta f^\gep_q(\qued,x)+2\beta^2\norm{f_{qq}^\gep}_\infty\Big]+\left[-\frac{\delta}{\gep}+2\abs{D\qued_r}+\gep\abs{\Delta\qued}\right],
\end{split}
\end{equation*}
for $\gep$ and $|\delta|$ small enough.
Since for any $x\in\RRn$, $\delta\in[0,\bar\delta]$,
$$\qued(\cdot,x)\in[m_-^{\gep,\delta}(x),m_+^{\gep,\delta}(x)]\subseteq[-1-\delta,1+\delta],$$
here and below the $L^\infty$ norm of the derivatives of $f^\varepsilon$ are taken for its first argument $q$ in the compact set $[-1-\bar\delta,1+\bar\delta]$.
To prove that $v^\gep$ is a subsolution of (\ref{asymptotic-probl}-i) it remains to see that the right hand side of the last inequality above is non positive.
For the right bracket we compute
$$-\frac{\delta}{\gep}+2\abs{D\qued_r}+\gep\abs{\Delta\qued}\leq -\frac{\delta}{\varepsilon}+\frac{2M_1}{\varepsilon^k}+\gep\frac{M_2}{\varepsilon^{2k}}
\leq -\frac{\delta}{2\varepsilon}
$$
when $\delta>0$ is fixed and $\varepsilon$ is small enough. For the left bracket, we combine (\ref{fproperty}) and (\ref{eqestdelta}), $f_q^\gep(m_\pm^{\gep,\delta}(x),x)\geq \gamma>0$ and $\qued(r,x)\to m_\pm^{\gep,\delta}(x)$ if $r\to\pm\infty$ exponentially fast, uniformly for $x\in\R^n$.
This means that we may suppose that there exists an $\bar r>0$ such that
$$f_q^\gep(\qued(r,x),x)\geq\frac{\gamma}{2},\quad\mbox{for any }|r|\geq\bar r,$$
and we can choose $\beta$ small enough, independent of $\gep$, $\delta$, in order to get
$$\beta\norm{f_{qq}^\gep}_\infty=\beta\sup\{\abs{f_{qq}^\gep(q,x)}:(q,x)\in[-1-\bar\delta,1+\bar\delta]\times\RRn\}\leq\frac{\gamma}{2}.$$
Thus we consider two cases. If ${|d(x,t)-2\beta|}\geq\gep\bar r$, we have that
$$v_t^{\gep}-\gep\Delta v^{\gep}+\frac{f^\gep(v^\gep,x)}{\gep}\leq-\frac{\qued_r}{\gep}-\frac{\delta}{2\gep}<0$$
for $\gep$ small enough.
If, on the other hand, ${|d(x,t)-2\beta|}< \gep\bar r$ and we denote with $K$ a strictly positive constant (which depends on $\bar r$) so that  $\qued_r(r,x)\geq K>0$ for any $|r|\leq\bar r$, $x\in\RRn$, we get that, for $\beta$ small compared to $K$,
 \begin{equation*}
v_t^{\gep}-\gep\Delta v^{\gep}+\frac{f^\gep(v^\gep,x)}{\gep}\leq\frac{1}{\gep}(-K+2\beta(\norm{f_q^\gep}_\infty +2\beta\norm{f_{qq}^\gep}_\infty))-\frac{\delta}{2\gep}<0.\end{equation*}

2. We now define in $\{(x,t)\in\RRn\times[0,\bar h]:d(x,t)\leq\gamma\}$,
$$\bar v^\varepsilon(x,t)=
\left\{\begin{array}{lll}
\sup(v^\varepsilon(x,t),-1)&\mbox{if}&-\gamma<d(x,t)\leq\gamma,\\
-1&\mbox{if}&d(x,t)\leq-\gamma.
\end{array}
\right.$$
By a similar reasoning to that of Lemma 4.4 in \cite{bs} one easily proves that $\bar v^\varepsilon$ is a continuous viscosity subsolution of (\ref{asymptotic-probl}-i) in $\{(x,t)\in\RRn\times[0,\bar h]:d(x,t)\leq\gamma\}$, for $\gep$ sufficiently small.

3. We finally define our function $\omega^{\varepsilon,\beta}:\RRn\times[0,\bar h]\to\RR$ as
$$
\omega^{\varepsilon,\beta}(x,t)=
\left\{\begin{array}{ll}
\psi(d(x,t))\bar v^{\varepsilon}(x,t)+(1-\psi(d(x,t)))(1-\beta)&\mbox{if }d(x,t)<\gamma,\\
1-\beta&\mbox{if }d(x,t)\geq\gamma,
\end{array}
\right.
$$
where $\psi:\RR\to\RR$ is a smooth function such that $\psi'\leq0$ in $\RR$, $\psi=1$ in $(-\infty,\gamma/2]$, $0<\psi<1$ in $(\gamma/2,3\gamma/4)$ and $\psi=0$ in $[3\gamma/4,+\infty)$. The only subset of $\RRn\times(0,\bar h)$ in which we have to check that $\omegaeb$ is a subsolution is $\{(x,t)\in\RRn\times(0,\bar h):\gamma/2\leq d(x,t)\leq3\gamma/4\}$. Since $\abs{Dd}=1$
\begin{equation}\label{omegaeb inequal}
\begin{split}
\omegaeb_t-\varepsilon\Delta\omegaeb+\frac{f^\varepsilon(\omegaeb,x)}{\varepsilon}=&\psi(\bar v^\varepsilon_t-\varepsilon\Delta\bar v^\varepsilon)-2\varepsilon\psi' Dd\cdot D\bar v^\varepsilon\\
&+(\psi'd_t-\gep\psi'\Delta d-\varepsilon\psi'')(\bar v^\varepsilon-(1-\beta))
+\frac{f^\varepsilon(\omegaeb,x)}{\varepsilon}.
\end{split}\end{equation}
If we take $2\beta<\gamma/4$
$$\begin{array}{ll}
v^\varepsilon(x,t)&\geq\qued\Big(\frac{\gamma}{4\gep},x\Big)-2\beta\\
&\geq m_+^{\gep,\delta}(x)-ae^{-\frac{b\gamma}{4\varepsilon}}-2\beta\geq1-M\delta-ae^{-\frac{b\gamma}{4\varepsilon}}-2\beta
\end{array}$$ 
and so for $\gep,\beta,\delta$ small $\bar v^\varepsilon(x,t)=v^\varepsilon(x,t)$ and $\bar v^\varepsilon(x,t)-(1-\beta)\leq-\beta$. Moreover, since $f^\varepsilon_{qq}(1,x)>0$, $f^\varepsilon(\omegaeb,x)\leq\psi f^\varepsilon(v^\varepsilon,x)+(1-\psi)f^\varepsilon(1-\beta,x)$, (\ref{omegaeb inequal}) becomes
\begin{equation*}
\begin{split}
\omegaeb_t-\varepsilon\Delta\omegaeb+\frac{f^\varepsilon(\omegaeb,x)}{\varepsilon}\leq&-\psi\frac{\delta}{2\gep}-2\psi' q^\gep_r+2 \varepsilon ^{1-k}M_1\\
&+\psi'd_t(v^\varepsilon-(1-\beta))+(1-\psi)\frac{f^\varepsilon(1-\beta,x)}{\varepsilon}+O(\varepsilon)\\
\leq&-\frac1\gep\left(\psi\frac\delta2+(1-\psi)(-f^\gep(1-\beta,x))\right)+\tilde M_3+o_\varepsilon(1)\leq0,
\end{split}\end{equation*}
for $\varepsilon$ small enough. To get the last inequality, we also used the fact that $d_t\leq0$ and $\sup_{x\in\R^n}f^\varepsilon(1-\beta,x)<0$ for $\beta$ small enough.

4. Now we observe that, if $d(x,t)<\beta$, then $v^\varepsilon(x,t)\leq q^{\gep,\delta}(-\frac{\beta}\gep,x)-2\beta\leq m_-^{\gep,\delta}(x)+ae^{-\frac{b\beta}\gep}-2\beta\leq m_-^{\gep,\delta}(x)\leq-1$ for $\gep$ small enough (and $\beta$ fixed).
This means that, for $\varepsilon$ small enough
$$v^\varepsilon(x,t)\leq(1-\beta)\mathds1_{\{d\geq\beta\}}(x,t)-\mathds1_{\{d<\beta\}}(x,t).$$
By definition of $\bar v^\varepsilon$ and of $\omegaeb$ the last inequality still holds for $\bar v^\varepsilon$ and $\omegaeb$ (we just point out that if $d(x,t)\geq\beta$ then $\omegaeb(x,t)$ is equal to $1-\beta$ or to a convex linear combination of elements of $(-\infty,1-\beta]$). If we consider $t=0$ we have proved the second part of our Lemma.

5. Finally we just remark that, with a reasoning similar to the one in point 4. one can prove that if $(x,t)\in B(\hat x,r)\times(0,\bar h)$ and $d(x,t)>3\beta$, then
$$v^\gep(x,t)\geq q^{\gep,\delta}(\frac\beta\gep,x)-2\beta\geq1-ae^{-\frac{b\beta}\gep}-2\beta-M\delta.$$
Hence $\liminfstar\omega^{\varepsilon,\beta}(x,t)\geq1-3\beta,$
for $\beta\geq M\delta$.
\end{proof}

\emph{Second step: propagation.} In this step we show that $\omegaone$ and $((\Omega^2_t)^c)_{t\in(0,T)}$ are respectively super and sub-flows with normal velocity $-\alpha$. Since the two proofs are similar we only show that $\omegaone$ is a superflow. One of the new difficulties here is due to the fact that the flow has a discontinuous velocity and we will need to approximate the definition of super- and subflow by using continuous velocities.
We do that by means of the same smooth functions $c^\varepsilon$ appearing in the problem. We consider the following modified families of continuous functions and define:
$$\overline c{^\gep}(x):=\eta^\gep(x)n_2(x)+(1-\eta^\gep(x))c^\gep(x),\quad
\underline c{^\gep}(x):=\xi^\gep(x)n_1(x)+(1-\xi^\gep(x))c^\gep(x),$$
where $\eta^\gep,\xi^\gep\in C^2(\R^n)$, $\eta^\gep(x),\;\xi^\gep(x)\in[0,1]$,
$$\eta^\gep(x):=
\left\{
\begin{array}{ll}
1&\mbox{if } \tilde d(x)\geq -\gep\\
0&\mbox{if } \tilde d(x)\leq-2\gep
\end{array}
\right.;\quad
\xi^\gep(x):=
\left\{
\begin{array}{ll}
1&\mbox{if } \tilde d(x)\leq \gep\\
0&\mbox{if } \tilde d(x)\geq2\gep
\end{array}
\right..
$$
Notice that
\begin{equation*}
n_1\leq \underline c^\gep\leq c^\gep\leq \overline c^\gep\leq n_2,\quad \underline c^\gep\leq \alpha_*\leq\alpha^*\leq \overline c^\gep
\end{equation*}
and $\limsupstar \overline c^\gep(x)=\alpha^*(x)$, $\liminfstar \underline c^\gep(x)=\alpha_*(x)$
We denote below as ${\overline{\mathcal F}}=\{\bar c^\gep,\,\varepsilon>0\}$, ${\underline{\mathcal F}}=\{\underline c^\gep,\,\varepsilon>0\}$.

\begin{prop}\label{equivalent def of generalized flow}
\begin{description}
\item[(i)] A family $\open$ of open subsets of $\RRn$, such that the set $\Omega:=\bigcup_{t\in(0,T)}\Omega_t\times\{t\}$ is open in $\RRn\times[0,T]$, is a \emph{generalized superflow} with normal velocity $-\alpha$ if and only if
it is a generalized superflow with normal velocity $- \overline c\in C(\R^n)$, for all $\bar c\in\overline{\mathcal F}$;
\item[(ii)] A family $\close$ of close subsets of $\RRn$ such that the set $\mathcal{F}:=\bigcup_{t\in(0,T)}\mathcal{F}_t\times\{t\}$ is closed in $\RRn\times[0,T]$ is a \emph{generalized subflow} with normal velocity $-\alpha$ if and only if
it is a generalized subflow with normal velocity $-\underline c$, for all $\underline c\in\underline{\mathcal F}$.
\end{description}
\end{prop}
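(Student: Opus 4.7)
My plan is to route the argument through the viscosity solution characterization of Theorem~\ref{flow-viscositysol}, which exchanges the awkward strict test-function inequalities of Definition~\ref{generalized flow} for pointwise supersolution (resp.\ subsolution) inequalities that can be taken to the limit in $\gep$. Since $\overline c^\gep,\underline c^\gep\in C(\R^n)$, Theorem~\ref{flow-viscositysol} asserts that $\open$ (resp.\ $\close$) is a superflow (resp.\ subflow) with velocity $-\overline c^\gep$ (resp.\ $-\underline c^\gep$) if and only if $\chi=\mathds{1}_{\Omega}-\mathds{1}_{\Omega^c}$ (resp.\ $\overline\chi=\mathds{1}_{\mathcal F}-\mathds{1}_{\mathcal F^c}$) is a viscosity supersolution of $u_t+\overline c^\gep(x)|Du|=0$ (resp.\ subsolution of $u_t+\underline c^\gep(x)|Du|=0$). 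The same theorem applied to the discontinuous $\alpha$ restates the superflow (resp.\ subflow) property with velocity $-\alpha$ as the Ishii condition that at every local minimum $(x_0,t_0)$ of $\chi_*-\varphi$ one has $\varphi_t(x_0,t_0)+\alpha^*(x_0)|D\varphi(x_0,t_0)|\geq 0$ (resp.\ at every local maximum of $\overline\chi^{\,*}-\varphi$, $\varphi_t(x_0,t_0)+\alpha_*(x_0)|D\varphi(x_0,t_0)|\leq 0$).

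The key auxiliary fact I would then establish is the \emph{pointwise} convergence $\overline c^\gep(x)\to\alpha^*(x)$ and $\underline c^\gep(x)\to\alpha_*(x)$ for every $x\in\RRn$. For $\overline c^\gep$: if $\tilde d(x)\geq 0$ then $\eta^\gep(x)=1$ for every $\gep>0$, so $\overline c^\gep(x)=n_2(x)$, which by (\ref{alpha def}) coincides with $\alpha^*(x)$ on and above $\tilde\Gamma$; if $\tilde d(x)<0$ then for $\gep$ small enough that $\tilde d(x)<-2\gep$ we have $\eta^\gep(x)=0$ and $\overline c^\gep(x)=c^\gep(x)\to n_1(x)=\alpha^*(x)$ by (\ref{c epsilon estimate}). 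The computation for $\underline c^\gep$ is symmetric. With this in hand, the ``only if'' direction of (i) is immediate: if $\chi$ is an Ishii supersolution with respect to $\alpha^*$, then at a minimum $(x_0,t_0)$ of $\chi_*-\varphi$ the inequality $\varphi_t+\alpha^*(x_0)|D\varphi|\geq 0$ combined with $\overline c^\gep(x_0)\geq\alpha^*(x_0)$ and $|D\varphi|\geq 0$ gives $\varphi_t+\overline c^\gep(x_0)|D\varphi|\geq 0$ for every $\gep>0$. The converse is the main content: assuming $\chi$ is a supersolution with respect to every $\overline c^\gep$, at any minimum $(x_0,t_0)$ of $\chi_*-\varphi$ we have $\varphi_t(x_0,t_0)+\overline c^\gep(x_0)|D\varphi(x_0,t_0)|\geq 0$ for all $\gep>0$; letting $\gep\to 0$ and invoking the pointwise limit recovers $\varphi_t+\alpha^*(x_0)|D\varphi|\geq 0$, i.e.\ the Ishii supersolution condition for $\alpha$. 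Statement (ii) is treated identically with local maxima in place of minima and the pair $(\alpha_*,\underline c^\gep)$ replacing $(\alpha^*,\overline c^\gep)$.

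The main obstacle, and the reason for routing the proof through Theorem~\ref{flow-viscositysol}, is that a direct attack on Definition~\ref{generalized flow} would require turning a strict inequality $\phi_t+\alpha^*(x)|D\phi|<0$ into a strict inequality $\phi_t+\overline c^\gep(x)|D\phi|<0$ uniformly on the whole compact cylinder $B(x_0,r]\times[t,t+h]$; this fails because $\overline c^\gep$ does not approach $\alpha^*$ uniformly across the transition strip $\{-2\gep\leq\tilde d\leq-\gep\}$, where $\overline c^\gep$ can be as large as $n_2$ while $\alpha^*=n_1$. The viscosity formulation localizes the supersolution requirement to a single point $x_0$, where only the much weaker pointwise convergence of the velocities is required, and is precisely what makes the limit passage clean.
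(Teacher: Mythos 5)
Your proposal is correct and follows essentially the same route as the paper: reduce via Theorem~\ref{flow-viscositysol} to showing that $\chi$ (resp.\ $\overline\chi$) is a supersolution (resp.\ subsolution) for $\alpha$ if and only if it is one for every $\overline c^\gep$ (resp.\ $\underline c^\gep$), with the easy direction from $\underline c^\gep\leq\alpha_*\leq\alpha^*\leq\overline c^\gep$ and the converse by passing to the limit in $\gep$ in the viscosity inequality. The only (harmless) difference is that the paper invokes the half-relaxed-limit stability theorem together with $\alpha^*=\limsup^*_{\gep\to0^+}\overline c^\gep$, whereas you exploit that $\chi$ is $\gep$-independent so that pointwise convergence $\overline c^\gep(x_0)\to\alpha^*(x_0)$ at the fixed contact point suffices.
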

\begin{proof}(i)  In view of Theorem \ref{flow-viscositysol}, in order to prove this statement we have to prove that the function $\chi=\mathds{1}_{\Omega}-\mathds{1}_{\Omega^c}$ is a viscosity supersolution of (\ref{eqhj}-i) if and only if it is a viscosity supersolution of
\begin{equation}\label{PCi}
\chi_t(x,t)+{\overline c}^\gep(x)|D\chi(x,t)|=0,\quad(x,t)\in\R^n\times(0,T),
\end{equation}
for all $\gep>0$.
We start assuming that
for every continuous function $\overline c^\gep$, $\chi$ is a viscosity supersolution of (\ref{PCi}).
The conclusion follows from the stability of viscosity supersolutions and the fact that $\alpha^\star=\limsup^*_{\gep\to0^+}\overline c^\gep$. Therefore $\chi$ is a supersolution also of (\ref{eqhj}-i). Since ${\overline c}^\gep\geq \alpha^*$, the other implication is trivial.
\\
(ii) The proof concerning the subflow is similar and we omit it.
\end{proof}
Next we want to show that $\omegaone$ is a superflow with normal velocity $-\overline c$, for any $\overline c\in\overline{\mathcal F}$.
\begin{prop}\label{prop propagation} Let $\overline c\in\overline{\mathcal F}$ be fixed and let $x_0\in\RRn$, $t\in(0,T)$, $r>0$, $h>0$ so that $t+h<T$. Suppose that $\phi:\RRn\times[0,T]\to\RR$ be a smooth function that satisfies (i)--(iv) in Definition \ref{generalized flow} with $\Omega^1_s$ substituting $\Omega_s$.
Then, for every $s\in(t,t+h)$,
$$\{x\in B(x_0,r]:\phi(x,s)>0\}\subset\Omega_s^1.$$
\end{prop}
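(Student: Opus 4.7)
The plan is to adapt the local subsolution construction of Lemma \ref{initialization second lemma}, replacing the radial test $\Phi$ there by the given $\phi$, and then to close the argument via comparison on the bounded cylinder $B(x_0,r]\times[t,t+h]$. Fix $\overline c\in\overline{\mathcal F}$. By construction $\overline c\geq c^\gep$ and (\ref{eqestdelta}) gives $c^{\gep,\delta}\leq c^\gep+M\delta\leq\overline c+M\delta$, so the strict inequality in (i) together with compactness of the cylinder yields, for $\delta$ small enough (uniformly in $\gep$), a uniform margin
\[
\phi_t(x,s)+c^{\gep,\delta}(x)|D\phi(x,s)|<-\theta\quad\text{on }B(x_0,r]\times[t,t+h]
\]
for some $\theta>0$. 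Condition (ii) and the implicit function theorem then furnish a smooth signed distance $d_\phi(x,s)$ to $\{\phi(\cdot,s)=0\}$, positive where $\phi>0$, on a tubular neighborhood $Q_{\gamma,h}:=\{|d_\phi|\leq\gamma\}\cap(B(x_0,r]\times[t,t+h])$, with $|Dd_\phi|=1$ and $\partial_s d_\phi+c^{\gep,\delta}(x)\leq-\theta/2$ after shrinking $\gamma$ if necessary.

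I would then set
\[
v^\gep(x,s):=\qued\!\left(\frac{d_\phi(x,s)-2\beta}{\gep},x\right)-2\beta\quad\text{in }Q_{\gamma,h},
\]
and check, by the same calculation as in step 1 of the proof of Lemma \ref{initialization second lemma}, that $v^\gep$ is a classical subsolution of (\ref{asymptotic-probl}-i) on $Q_{\gamma,h}$: the $-\theta\,\qued_r/(2\gep)$ gain coming from $\partial_s d_\phi+c^{\gep,\delta}<-\theta/2$, combined with the bistable gap $-\delta/\gep$ from the perturbation, dominates the parasitic terms $\gep|D^2\qued|$ and $|D\qued_r|$, which by (\ref{travelling wave properties}) and $k\leq 1/2$ are at worst $O(\gep^{1-2k})$ and $O(\gep^{-k})$ respectively and are therefore dwarfed by $\delta/\gep$ once $\delta$ is fixed and $\gep$ is sent to zero. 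Capping $v^\gep$ below by $-1$ on $\{d_\phi\leq-\gamma\}$ and gluing to $1-\beta$ beyond $\{d_\phi\geq\gamma\}$ with the cut-off $\psi$ of step 3 of that lemma then produces a global subsolution $\omegaeb$ of (\ref{asymptotic-probl}-i) on $B(x_0,r]\times[t,t+h]$ satisfying $-1\leq\omegaeb\leq 1-\beta$.

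The step I expect to be the main obstacle is the matching of $\omegaeb$ with $u^\gep$ on the parabolic boundary. Define the compact set
\[
\mathcal K:=\{\phi\geq 0\}\cap\bigl(B(x_0,r]\times\{t\}\cup\partial B(x_0,r]\times[t,t+h]\bigr).
\]
By (iii) and (iv), $\mathcal K\subset\Omega^1$, and a standard finite-covering argument applied to the pointwise identity $\liminfstard u^\gep\equiv 1$ on $\Omega^1$ upgrades this to the uniform bound $u^\gep\geq 1-\beta/2$ on $\mathcal K$ for all sufficiently small $\gep$, whence $\omegaeb\leq 1-\beta\leq u^\gep$ there. On the complementary part of the parabolic boundary, $\phi<0$ forces $d_\phi<0$, hence $\omegaeb\equiv-1\leq u^\gep$ by construction. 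The comparison principle for the semilinear parabolic equation (\ref{asymptotic-probl}-i) then yields $\omegaeb\leq u^\gep$ throughout the cylinder. Finally, for any $(x,s)$ in the cylinder with $\phi(x,s)>0$, continuity of $\phi$ and a sufficiently small choice of $\beta$ (depending on the point) ensure $d_\phi>3\beta$ on a neighborhood of $(x,s)$, and step 5 of Lemma \ref{initialization second lemma} then gives $\omegaeb\geq 1-a e^{-b\beta/\gep}-2\beta-M\delta$ on that neighborhood. Letting $\gep\to 0$ and then $\beta,\delta\to 0$ yields $\liminfstard u^\gep\equiv 1$ on a neighborhood of $(x,s)$, and therefore $x\in\Omega^1_s$ by (\ref{omega1-2}), which is the claim.
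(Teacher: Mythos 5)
Your argument is correct and is essentially the paper's own proof: the paper packages your subsolution construction into Lemma \ref{lemma propagation} (the analogue of Lemma \ref{initialization second lemma} with the given $\phi$ and its signed distance), matches $u^\gep$ with $\omegaeb$ on the parabolic boundary exactly as you do via (iii), (iv), compactness and the definition of $\Omega^1$, and concludes by the maximum principle, letting $\beta$ (and $\delta$) tend to zero. The only point to watch is that the uniform-in-$\gep$ width $\gamma$ of the tube on which your inequality $\partial_s d_\phi+c^{\gep,\delta}\leq-\theta/2$ holds must be obtained by first extending the margin written with the fixed function $\overline c$ (whose modulus of continuity is independent of $\gep$), as in (\ref{equation distance}), and only then using $c^{\gep,\delta}\leq\overline c+M\delta$ pointwise, since $\norm{Dc^{\gep,\delta}}_\infty$ may blow up like $\gep^{-k}$ and a direct continuity argument with $c^{\gep,\delta}$ would force $\gamma$ to depend on $\gep$.
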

\begin{proof}
Using the assumptions and the definition of $(\Omega^1_t)_{t\in(0,T)}$ we need to prove that for all $x\in B(x_0,r)$, $s\in(t,t+h)$ such that $\phi(x,s)>0$, then we have
$$\liminfstard u^\varepsilon(y,\tau)\geq1$$
for $(y,\tau)$ in a neighborhood of $(x,s)$. 
By (i), let $\tilde C>0$ be such that
$$\phi_t(x,s)+\overline c(x)|D\phi(x,s)|\leq-\tilde C<0,\quad\hbox{ for all }(x,s)\in B(x_0,r]\times[t,t+h].$$
The proof proceeds like the one of the first step with the difference that here we have to construct a subsolution of (\ref{asymptotic-probl}-i) only in the ball $B(x_0,r)$ and not in the whole space $\RRn$. We will need to use an extra boundary condition coming from (iv). In fact to prove this result it is enough to prove the following lemma which plays the role of Lemma \ref{initialization second lemma} in the first step. We denote below with $d(\cdot,s)$ the signed distance function to the set $\{\phi(\cdot,s)=0\}$ which has the same sign of $\phi$.
\begin{lem}\label{lemma propagation} Let the assumptions of Proposition \ref{prop propagation} hold true.
There exists $\bar\beta$ small enough such that, if $\beta\leq\bar\beta$ and $\varepsilon\leq\bar\varepsilon(\beta)$ then there is a viscosity subsolution $\omega^{\varepsilon,\beta}$ of (\ref{asymptotic-probl}-i) in $B(x_0,r)\times(t,t+h)$ that satisfies,

 1. $\omega^{\varepsilon,\beta}(x,t)\leq(1-\beta)\mathds1_{\{d(\cdot,t)\geq\beta\}}(x)-\mathds1_{\{d(\cdot,t)<\beta\}}(x),\quad\mbox{for all } x\in B(x_0,r],$

2. $\omega^{\varepsilon,\beta}(x,s)\leq(1-\beta)\mathds1_{\{d(\cdot,s)\geq\beta\}}(x)-\mathds1_{\{d(\cdot,s)<\beta\}}(x),\quad\mbox{for all } x\in \partial B(x_0,r]$, $s\in[t,t+h]$

3. if $(x,s)\in B(x_0,r]\times[t,t+h]$ satisfies $d(x,s)>3\beta$, then
$$\liminfstard\omega^{\varepsilon,\beta}(x,s)\geq1-\beta.$$
\end{lem}
If we assume for the moment that Lemma \ref{lemma propagation} holds true then we can prove Proposition \ref{prop propagation} as a direct consequence (see also \cite{bdl}). In fact, if $d(x,t)\geq\beta>0$, then also $\phi(x,t)>0$ and so, by property (iii) of $\phi$, $x\in\Omega^1_t$. By definition of $(\Omega^1_t)_{t\in(0,T)}$ this means that $\liminfstar u^\varepsilon(x,t)\geq1>1-\beta$ and so there exists an $\varepsilon_{x,t}>0$ such that, for all $\varepsilon\leq\varepsilon_{x,t}$, $(y,\tau)\in B(x,\varepsilon_{x,t})\times(t-\varepsilon_{x,t},t+\varepsilon_{x,t})$, we have $u^\varepsilon(y,\tau)\geq1-\beta$. Thus, by the compactness of $\{x\in B(x_o,r]:\phi(x,t)\geq0\}$ we can select an $\bar\varepsilon>0$, possibly depending only on $\beta$, so that, for all $\varepsilon\leq\bar\varepsilon$, and $x\in\{y\in B(x_0,r]:d(y,t)\geq\beta\}$ we have $u^\varepsilon(x,t)\geq1-\beta$. Therefore
$$u^\varepsilon(x,t)\geq(1-\beta)\mathds1_{\{d(\cdot,t)\geq\beta\}}(x)-\mathds1_{\{d(\cdot,t)<\beta\}}(x).$$
for all $\varepsilon\leq\bar\varepsilon$, $x\in B(x_0,r]$. In a similar way we can also obtain that, for $\varepsilon$ small enough,
$$u^\varepsilon(x,s)\geq(1-\beta)\mathds1_{\{d(\cdot,s)\geq\beta\}}(x)-\mathds1_{\{d(\cdot,s)<\beta\}}(x),$$
for any $(x,s)\in \partial B(x_0,r]\times[t,t+h]$.
Combining these inequalities with those in 1. and 2. in the statement of Lemma \ref{lemma propagation}, by the maximum principle we can conclude that
$$\omega^{\varepsilon,\beta}(x,s)\leq u^\varepsilon(x,s),\quad\mbox{for all }(x,s)\in B(x_0,r]\times[t,t+h].$$
By 3. in Lemma \ref{lemma propagation}, $\liminfstar u^\varepsilon(x,s)\geq1-\beta$ for every $(x,s)\in B(x_0,r]\times[t,t+h]$ such that $d(x,s)>3\beta$. Since $\beta$ is arbitrary we can now send $\beta$ to zero in order to obtain that $\liminfstar u^\varepsilon(x,s)\geq1$ if $(x,s)\in B(x_0,r]\times[t,t+h]$ and $\phi(x,s)>0$. Finally we remark that, if $s\in(t,t+h)$, $x\in B(x_0,r)$ are given and $\phi(x,s)>0$, we have that $\phi(y,\tau)>0$ in a neighborhood of $(x,s)$ and therefore $\liminfstar u^\varepsilon(y,\tau)\geq1$ for $(y,\tau)$ in a neighborhood of $(x,s)$ in $B(x_0,r)\times(t,t+h)$. Thus $x\in\Omega^1_s$.
\end{proof}
\begin{proof}[Proof of Lemma \ref{lemma propagation}] This proof is similar to the one of Lemma \ref{initialization second lemma}, although with a different and not explicit function $\phi$, and therefore we just sketch the main differences. First of all we observe that since $\phi$ satisfies property (ii) of Proposition \ref{prop propagation} there exists $\gamma>0$ such that $d$ is smooth in the set $Q_\gamma=\{(x,s)\in B(x_0,r]\times[t,t+h]:|d(x,s)|\leq\gamma\}$, $|D\phi(x,s)|\neq0$ in $Q_\gamma$. Since $Dd=\frac{D\phi}{|D\phi|}$ and $d_t=\frac{\phi_t}{|D\phi|}$ on $\{\phi=0\}$, and using (i), we may also suppose that
\begin{equation}\label{equation distance}
d_t(x,s)+\bar c(x)\leq-\frac{\tilde C}{4|D\phi(x,s)|}\quad\mbox{for all }(x,s)\in Q_\gamma.
\end{equation}
We notice that for every $\gep$ sufficiently small we have that $c^\gep\leq\overline c$ and will restrict to such values of $\gep$ in the reaction-diffusion equation.

As in Lemma \ref{initialization second lemma} we first define a function $v^{\varepsilon}$ in $Q_\gamma$ as
$v^{\varepsilon}(x,t)=q^{\varepsilon,\delta}\Big(\frac{d(x,t)-2\beta}{\varepsilon},x\Big)-2\beta$, with a suitable auxiliary parameter $\delta\in(0,\bar\delta]$. Thanks to inequality (\ref{equation distance}), the traveling wave equation and (\ref{eqestdelta}), we can see that for $(x,t)\in Q_\gamma$,
\begin{equation*}
\begin{split}
v_t^{\varepsilon}-\varepsilon\Delta v^{\varepsilon}+\frac{f^\varepsilon(v^\varepsilon,x)}{\varepsilon}&\leq\frac{\qued_r}{\varepsilon}\big(-\bar c(x)-\frac{\tilde C}{4|D\phi(x,s)|}+c^{\varepsilon,\delta}(x)-\gep\Delta d\big)-\frac{\delta}{\varepsilon}+\\
&\qquad\qquad\qquad\qquad+2\abs{D\qued_r}+\varepsilon\abs{\Delta\qued}-\frac{2\beta f^\gep_q(\qued,x)}{\gep}+\frac{2\beta^2\norm{f_{qq}^\gep}_\infty}{\gep}\\
&\leq\frac{1}{\gep}\Big[\qued_r\big(M\delta-\frac{\tilde C}{4\norm{D\phi_{|Q_\gamma}}_\infty}+\gep\abs{\Delta d}\big)-2\beta f^\gep_q(\qued,x)+2\beta^2\norm{f_{qq}^\gep}_\infty\Big]\\
&\qquad\qquad\qquad\qquad\qquad\qquad\qquad\qquad\qquad\qquad-\frac{\delta}{\varepsilon}+\frac{2M_1}{\varepsilon^k}+\gep\frac{M_2}{\varepsilon^{2k}} \\
&\leq\frac{1}{\gep}\Big[-\frac{\tilde C}{16\norm{D\phi_{|Q_\gamma}}_\infty}\qued_r-2\beta f^\gep_q(\qued,x)+2\beta^2\norm{f_{qq}^\gep}_\infty\Big]-\frac{\delta}{2\varepsilon},
\end{split}
\end{equation*}
for $\delta>0$ (independent of $\beta$) and then $\varepsilon$ small enough. As in Lemma \ref{initialization second lemma} it can be easily seen that, if we choose $\beta$ small enough and independent of $\delta$, the sum of the terms inside the square brackets is non positive and so $v^\gep$ is a strict subsolution in $Q_\gamma$.
From now on the extension to a global subsolution $\omega^{\varepsilon,\beta}$ in $B(x_o,r]\times[t,t+h]$ and the proof that such a function satisfies 1, 2, 3, is similar to that of Lemma \ref{initialization second lemma} and we omit it.
\end{proof}

The proof of Theorem \ref{thm} is now easy by using the previous two steps and Corollary \ref{Cor levelset-flow}.
\end{proof}

\section{The no-interior condition}
In this section we come back to consider the Cauchy problem (\ref{eqhj}). We want to prove that,  since the velocity $\alpha$ has a constant sign, the zero level set $\{x:u(x,t)=0\}$ of the (unique) continuous viscosity solution of (\ref{eqhj}) has an empty interior provided so does the zero level set of the initial condition $\{x:u_o(x)=0\}$, i.e  condition (\ref{nointerior}) is fullfilled. To this end we use the representation formula for $u$ obtained by the authors in \cite{dzs1}. In order to apply such a result we assume in this section that,
 $u_o\in C(\RRn)$ and $\alpha:\RRn\to[\rho,+\infty)$, for some $\rho>0$, is piecewise Lipschitz continuous as defined in Section 2.
Its discontinuity set $\tilde\Gamma\in\RRn$ is then the finite union of connected Lipschitz hypersurfaces.
Under these hypothesis we have that, if we denote with $(x(s),t(s))=(x(s\,;x,m),t(s\,;x,t,m))$ the Caratheodory solution of the dynamical system
$$\left\{
\begin{array}{l}
\dot x(s)=m(s),\\
t(s)=t-\int_0^s \frac{1}{\alpha_*(x(s))}\;ds,\\
(x(0),t(0))=(x,t),
\end{array}\right.$$
then, for any $(x,t)\in\RRn\times[0,+\infty)$,
\begin{equation}\label{representation formula}
u(x,t)=\inf_{m\in\mathcal C}u_0(x(\tau_{x,t}(m);x,m)).
\end{equation}
Here $\mathcal C$ is the set of all measurable functions $m:[0,+\infty)\to A:=\{a\in\RRn:|a|\leq1\}$ (controls) and $\tau_{x,t}(m)$ satisfies
$t(\tau_{x,t}(m))=0$, i.e.
$$t=\int_0^{\tau_{x,t}(m)}\frac{1}{\alpha_*(x(s\,;x,m))}.$$

In order to prove that $\{(x,t):u(x,t)=0\}$ has empty interior we also suppose on the initial condition that
\begin{equation}\label{initial datum hyp}
\begin{split}
\{u_0>0\}&\neq\emptyset,\quad\{u_0<0\}\neq\emptyset,\\
\Gamma_0=\{u_0=0\}&=\partial\{u_0>0\}=\partial\{u_0<0\}
\end{split}\end{equation}

\begin{thm}Assume that $\alpha$ and $u_0$ satisfy all the assumptions above and (\ref{initial datum hyp}) holds. Then the zero level set $\{(x,t):u(x,t)=0\}$ satisfies the no-interior condition in (\ref{nointerior}).
\end{thm}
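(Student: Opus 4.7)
The plan is a contradiction argument using the representation formula~(\ref{representation formula}). Suppose, for contradiction, that $(x_0,t_0)$ were an interior point of $\{u=0\}$, so $u\equiv 0$ on $U:=B(x_0,r)\times(t_0-r,t_0+r)$ for some $r>0$. Then~(\ref{representation formula}) forces $u_0\ge 0$ on the reachable set $\mathcal R_{x,t}:=\{x(\tau_{x,t}(m);x,m):m\in\mathcal C\}$ for every $(x,t)\in U$.

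The geometric heart of the argument is the propagation inclusion
\[
B(y,\rho\epsilon)\subset\mathcal R_{x_0,t_0+\epsilon}\qquad\text{for every }y\in\mathcal R_{x_0,t_0},\ \epsilon>0.
\]
Given $y'\in B(y,\rho\epsilon)$, I would take a control $m$ realizing $y$ from $(x_0,t_0)$ at stopping control time $\tau=\tau_{x_0,t_0}(m)$ and concatenate it with (a) the constant unit control $(y'-y)/|y'-y|$ on the interval $[\tau,\tau+|y'-y|]$, which steers the trajectory along the straight segment from $y$ to $y'$ in control time $|y'-y|$ and physical time at most $|y'-y|/\rho<\epsilon$, and (b) the null control on $[\tau+|y'-y|,\tilde\tau]$, keeping the trajectory at $y'$ until the total accumulated physical time equals $t_0+\epsilon$. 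The strict lower bound $\alpha_*\ge\rho>0$ makes the leftover physical time positive, so $\tilde\tau$ is well defined and $y'\in\mathcal R_{x_0,t_0+\epsilon}$.

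Pick $\epsilon\in(0,r)$, so that $(x_0,t_0+\epsilon)\in U$; then $u_0\ge 0$ on $\mathcal R_{x_0,t_0+\epsilon}$ and a fortiori on $B(y,\rho\epsilon)$ for every $y\in\mathcal R_{x_0,t_0}$. Hypothesis~(\ref{initial datum hyp}) implies $\Int\{u_0\ge 0\}=\{u_0>0\}$: any $z\in\Gamma_0=\partial\{u_0<0\}$ has every neighborhood meeting $\{u_0<0\}$ and so cannot be interior to $\{u_0\ge 0\}$. Take a minimizing sequence $\{y_n\}\subset\mathcal R_{x_0,t_0}$ with $u_0(y_n)\to u(x_0,t_0)=0$; since $|m|\le 1$ and $\alpha$ is bounded, $\{y_n\}$ lies in a bounded ball and, up to a subsequence, $y_n\to y^*$. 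For $n$ large, $B(y^*,\rho\epsilon/2)\subset B(y_n,\rho\epsilon)\subset\{u_0\ge 0\}$, so $y^*\in\Int\{u_0\ge 0\}=\{u_0>0\}$ and $u_0(y^*)>0$; but continuity of $u_0$ forces $u_0(y^*)=0$, the sought contradiction.

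The remaining equalities $\{u=0\}=\partial\{u>0\}=\partial\{u<0\}$ in~(\ref{nointerior}) follow by one-sided variants of this control construction: at any zero point $(x_0,t_0)$ the minimizing sequence furnishes a limit $y^*\in\Gamma_0=\partial\{u_0<0\}\cap\partial\{u_0>0\}$, and extending a near-optimal control to nearby $z_k\in\{u_0<0\}$ produces $(x_0,t_0+\delta_k)$ with $u<0$ (which gives $\{u=0\}\subset\partial\{u<0\}$), while the monotonicity of $u$ in $t$ (forced by $\alpha\ge\rho>0$), combined with the empty-interior conclusion above, gives the symmetric inclusion $\{u=0\}\subset\partial\{u>0\}$. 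The main technical obstacle is the propagation inclusion itself: because $\alpha$ is only piecewise Lipschitz the reachable sets need not be open, and the uniform lower bound $\alpha\ge\rho>0$ is used essentially to trade small spatial freedom at the endpoint for small physical-time perturbations at the start.
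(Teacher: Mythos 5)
Your argument is correct in substance and runs on the same machinery as the paper's proof: the representation formula (\ref{representation formula}), reachable sets, concatenation of a near-optimal control with a straight-line control and then the null control (which is exactly where the lower bound $\alpha_*\geq\rho$ enters, as in the paper's inclusion $B(\hat x,\rho\hat t]\subseteq\mathcal R_{\hat x,\hat t}$ and (\ref{reach inclusion})), and hypothesis (\ref{initial datum hyp}) to produce points with $u_0<0$ arbitrarily close to any zero of $u_0$. The organization differs, though: the paper does not argue by contradiction on an interior point of $\{u=0\}$, but proves directly the stronger pointwise claim that $u(\hat x,\hat t)=0$ implies $u(\hat x,\hat t-h)>0$ and $u(\hat x,\hat t+h)<0$ for every $h>0$, using $\overline{\mathcal R}_{\hat x,\hat t-h}\subseteq\Int\mathcal R_{\hat x,\hat t}$. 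That single claim yields the full condition (\ref{nointerior}), i.e.\ both equalities $\{u=0\}=\partial\{u>0\}=\partial\{u<0\}$, in one stroke, whereas your route first obtains only emptiness of the interior of $\{u=0\}$ and must then recover the two boundary equalities separately.

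In that second stage one step is loosely attributed and, as literally stated, does not work: monotonicity of $t\mapsto u(x_0,t)$ together with the empty-interior conclusion does \emph{not} imply $\{u=0\}\subseteq\partial\{u>0\}$, since these two facts do not exclude that $u\leq0$ on a whole neighborhood of a zero point $(x_0,t_0)$ while $u(x_0,\cdot)=0$ on a vertical segment $[t_0-\eta,t_0]$ (a segment has empty interior in space-time). What closes this is the statement you actually establish on the way to $\{u=0\}\subseteq\partial\{u<0\}$: a zero at $(x_0,s)$ forces $u(x_0,s+\delta)<0$ for arbitrarily small $\delta>0$. Combined with monotonicity, this excludes such vertical zero segments and gives $u(x_0,t_0-h)>0$ for all $h>0$ --- precisely the paper's claim --- hence $\{u=0\}\subseteq\partial\{u>0\}$. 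With that substitution (cite your own forward claim rather than ``the empty-interior conclusion''), your proof is complete.
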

\begin{proof}For all $(\hat x,\hat t)\in\RRn\times(0,+\infty)$ define the (bounded) set of reachable points from $(\hat x,\hat t)$ as
$$\reach:=\{x(\tau_{\hat x,\hat t}(m);\hat x,m)\::m\:\in\mathcal C\}.$$
First of all observe that $B(\hat x,\rho\hat t]\subseteq\reach$. If in fact $x\in B(\hat x,\rho\hat t)$ and $x\neq\hat x$, then $x=\hat x+a\abs{x-\hat x}$, with $a=\frac{x-\hat x}{\abs{x-\hat x}}$. We consider the control
$$
\hat m(s)=\left\{\begin{array}{ll}
\frac{x-\hat x}{|x-\hat x|},\quad&\hbox{if }s\leq\abs{x-\hat x},\\
0&\hbox{if }s>\abs{x-\hat x}.
\end{array}\right.
$$
We have that $\tau_{\hat x,\hat t}(\hat m)\geq\rho\hat t\geq\abs{x-\hat x}$ and $x(\tau_{\hat x,\hat t}(\hat m);\hat x,\hat m)=x(\abs{x-\hat x};\hat x,\hat m)=x$, i.e.
$x\in\reach$. Using this inclusion and concatenation of control functions, one can then easily show that for every $h\in (0,\hat t)$
$$\overline\reachtwo\subseteq\overline{\bigcup_{x\in\reachtwo}B(x,\rho\frac{h}{2})}\subseteq \bigcup_{x\in\reachtwo}B(x,\rho h)\subseteq\bigcup_{x\in\reachtwo}\mathcal R_{x,h}
\subseteq\reach,$$
and so
\begin{equation}\label{reach inclusion}
\overline\reachtwo\subseteq\overset{\circ}{\mathcal{R}}_{\hat x,\hat t}\quad\mbox{for all }(\hat x,\hat t)\in\RRn\times[0,+\infty),\;h>0.
\end{equation}

Next we claim that if $u(\hat x,\hat t)=0$ then $u(\hat x,\hat t-h)>0$ for every $h>0$, thus $(\hat x,\hat t)\notin$ Int$\{(x,t):u(x,t)=0\}$. Indeed
suppose that $u(\hat x,\hat t)=0$ and $h>0$. By (\ref{reach inclusion}) and the representation formula (\ref{representation formula}) for $u$ we have that $u(\hat x,\hat t-h)=\inf\{u_0(y):y\in\reachtwo\}\geq u(\hat x,\hat t)=0$. Assume by contradiction that $u(\hat x,\hat t-h)=0$, i.e. there exists $\hat y\in\overline\reachtwo$ such that $u_0(\hat y)=0$. Let $r>0$ be such that $B(\hat y,r)\subseteq\overset{\circ}{\mathcal{R}}_{\hat x,\hat t}$; by (\ref{initial datum hyp}) we have that there exists $y_1\in B(\hat y,r)$ such that $u_0(y_1)<0$. Again, this means that
$$u(\hat x,\hat t)=\inf_{y\in\reach}u_0(y)\leq u_0(y_1)<0,$$
and we get a contradiction since $u(\hat x,\hat t)=0$.

Assuming the claim, our Theorem immediately follows since we have that, for any $(\hat x,\hat t)\in\RRn\times(0,+\infty)$, $h>0$ sufficiently small,
$$\mbox{if }u(\hat x, \hat t)=0,\mbox{ then }\quad u(\hat x, \hat t-h)>0\mbox{ and }u(\hat x, \hat t+h)<0.$$
\end{proof}

\section{A second asymptotic problem}

In this last section we want to briefly discuss a different scaling in the reaction-diffusion equation, namely (\ref{asymptotic-probl-two}).
To this end we have to modify some of the assumptions of Section 3.
We have a cubic function $f^\gep$ with the same structure as in section \ref{f hyp} but with (\ref{fproperty2}) replaced by the stronger condition, this time for some $k\in[0,1)$,
\begin{equation}\label{fproperty2b}
\begin{split}
\left\{
\begin{array}{l}
\mbox{for every compact } K \subset\RR\mbox{ there exists a constant }C=C(K)>0\\
\mbox{ such that, for all }(q,x)\in K\times\RRn,\:1\leq i,j\leq n,\\
|f^\varepsilon_{q}(q,x)|,|f^\varepsilon_{qq}(q,x)|\leq C,\;|f^\varepsilon_{x_i}(q,x)|,|f^\varepsilon_{x_iq}(q,x)|\leq\frac{C_1}{\varepsilon^{k-1}},\;|f^\varepsilon_{x_ix_j}(q,x)|\leq\frac{C_2}{\varepsilon^{2k-1}}.
\end{array}
\right.
\end{split}
\end{equation}
Moreover we assume that now $\overline m=\underline m=0$, so that $m_o^\gep\longrightarrow0^+$ locally uniformly in $\RRn$, i.e. for any given compact $K_1\subset\R^n$ and all $\sigma>0$ we can find an $\varepsilon_\sigma>0$ such that $m_o^{\gep}(x)\in(0,\sigma]$ for all $\gep\leq\gep_{\sigma}$, $x\in K_1$. Finally in (\ref{eqestf}) we choose $m_1=\sigma$, $m_2=0$.

Consequently we adapt the growth rate in (\ref{travelling wave properties}) as
\begin{equation}\label{travelling wave propertiesb}
\begin{array}{l}
|Dq^\varepsilon(r,x)|,\;|Dq^\varepsilon_r(r,x)|\leq \frac{M_1}{\varepsilon^{k-1}},\;|D^2 q^\varepsilon(r,x)|\leq \frac{M_2}{\varepsilon^{2k-1}},\;\mbox{for all }x\in K_1,\,r\in\RR.
\end{array}\end{equation}
During the proofs we also need to modify the cubic-like function $f^\gep$ as $f^{\gep,\delta}=f^\gep+\gep\delta$, for $\delta\in[-\overline \delta,\overline\delta]$ and modify accordingly the notations for the properties of $f^{\gep,\delta}$. Moreover we assume that there is a constant $M>0$ independent of $\gep,\delta$ such that
\begin{equation}\label{eqestdeltab}
\sup_{x\in\R^n}\left[|c^\gep(x)-c^{\gep,\delta}(x)|+|1-m_+^{\gep,\delta}(x)|+|1+m_-^{\gep,\delta}(x)|\right]\leq M|\delta|\gep.
\end{equation}
As for the asymptotics of the velocity of the traveling wave solutions, we replace (\ref{c epsilon estimate}) by
\begin{equation}\label{c epsilon estimate 2}\begin{array}{c}
0<2\rho\leq{n_1(x)}<\frac{c^\varepsilon(x)}{\gep}< {n_2(x)}\leq2(1-\rho),\quad\mbox{for any }x\in\RRn,\\
\frac{c^{\varepsilon}}{\gep}\longrightarrow\alpha,\quad \hbox{ locally uniformly off }\tilde\Gamma,
\end{array}\end{equation}
where the functions $\alpha,n_1,n_2$ are assumed as in (\ref{alpha def}) and $\tilde\Gamma$ is a smooth hypersurface.

We want to show that the front associated with the asymptotics of (\ref{asymptotic-probl-two}) evolves according to the geometric pde (\ref{eqmc}),
whose normal velocity is given by ${\cal K}-\alpha$, where $\cal K$ is the mean curvature of the front.
To be more precise we introduce the following sets
\begin{equation}\label{new asymp}
\begin{array}{ll}
\Omega^1:=\Int\Big\{(x,t)\in\RRn\times[0,T]:\displaystyle\liminfstard \frac{u^\epsilon(x,t)-1}{\gep}= 0\Big\},\\
\Omega^2:=\Int\Big\{(x,t)\in\RRn\times[0,T]:\displaystyle\limsupstard \frac{u^\epsilon(x,t)+ 1}{\gep}=0\Big\}
\end{array}
\end{equation}
and $\Omega_o^1,\;\Omega_o^2$ as before. The result is as follows.

\begin{thm}\label{thm2} Assume (\ref{alpha def}), (\ref{fproperty}), (\ref{fproperty2b}), (\ref{eqestf}), (\ref{travelling wave properties a}), (\ref{travelling wave properties b}), (\ref{travelling wave propertiesb}), (\ref{eqestdeltab}), (\ref{c epsilon estimate 2}). Let $u^\varepsilon$ be the unique solution of (\ref{asymptotic-probl-two}), where $g:\RRn\to[-1,1]$ is a continuous function such that the sets $\Gamma_o=\{x:g(x)=0\}$, $\Omega_o^+=\{x:g(x)>0\}$, $\Omega_o^-=\{x:g(x)<0\}$ are nonempty and $(\Gamma_o,\Omega_o^+,\Omega_o^-)\in{\mathcal E}$. Then
$$u^\varepsilon(x,t)\longrightarrow
\left\{\begin{array}{rll}
1& \mbox{in}&\{(x,t):u(x,t)>0\},\\
-1& \mbox{in}&\{(x,t):u(x,t)<0\},
\end{array}
\right.
$$
locally uniformly as $\varepsilon\to0$,
provided $u$ is a unique, continuous viscosity solution of
\begin{equation}\label{PCfront2}
\left\{\begin{array}{ll}
u_t(x,t)+F(Du(x,t),D^2u(x,t))+\alpha(x)|Du(x,t)|=0\mbox{ in }\RRn\times(0,+\infty),\\
u(x,0)=d_o(x),
\end{array}\right.
\end{equation}
and the comparison principle holds for (\ref{PCfront2}). Here
$d_o$ is the signed distance to $\Gamma_o$ which is positive in $\Omega^+_o$ and negative in $\Omega_o^-$. If in addition the no-interior condition (\ref{nointerior}) for the set $\{u=0\}$ holds, then, as $\varepsilon\to0$,
$$u^\varepsilon(x,t)\longrightarrow
\left\{\begin{array}{rll}
1&\mbox{in} &\{u>0\},\\
-1&\mbox{in} &\overline{\{u>0\}}^c,
\end{array}
\right.
$$
 locally uniformly.
\end{thm}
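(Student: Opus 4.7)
The plan is to mimic the three-step scheme of Theorem \ref{thm} (initialization, propagation, conclusion via Corollary \ref{Cor levelset-flow}), but using a mean-curvature-corrected traveling wave ansatz. Since the comparison principle for (\ref{PCfront2}) is assumed, and since Theorem \ref{flow-viscositysol} and Corollary \ref{Cor levelset-flow} were stated in terms of a generic geometric pde satisfying comparison (as the authors note after Theorem \ref{chi viscosol}), the abstract machinery of Section 3 will apply once we show that $(\Omega^1_t)_t$ and $((\Omega^2_t)^c)_t$ defined from (\ref{new asymp}) are respectively generalized super- and subflows with normal velocity $\mathcal{K}-\alpha$.

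As a preliminary, I would adapt Proposition \ref{equivalent def of generalized flow} to the curvature-dependent setting: the families $\bar c^\gep,\underline c^\gep$ already constructed in Section 3 still satisfy $\underline c^\gep\le \alpha_*\le\alpha^*\le\bar c^\gep$ and converge by half-relaxed limits to $\alpha_*,\alpha^*$, so by stability of viscosity supersolutions for the geometric equation $u_t+F(Du,D^2u)+\bar c^\gep(x)|Du|=0$ it suffices to verify the super/subflow property against all test functions that are strict supersolutions (resp.\ subsolutions) of the continuous-velocity versions. This reduces everything to the case of a smooth velocity and a smooth test function.

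For initialization I would essentially repeat the argument of Lemmas \ref{initialization first lemma} and \ref{initialization second lemma}, replacing the short-time ODE barrier $\chi(t/\gep,\xi;x)$ by the analogous $\chi(t/\gep^2,\xi;x)$ to absorb the new scaling of the reaction term; the diffusion, now unscaled, contributes only $O(\gep^{2-2k})=o(1)$ error terms thanks to the stronger bound (\ref{fproperty2b}) on the derivatives of $f^\gep$ (this is precisely why $k\in[0,1)$ is needed here instead of $k\in[0,1/2]$). Since $\overline m=\underline m=0$, the set $\Gamma_o$ is simply $\{g=0\}$ and the short-time generation conclusion goes through with essentially no change.

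The heart of the proof, and the step I expect to be hardest, is the propagation. Given a smooth test function $\phi$ satisfying $\phi_t+F(D\phi,D^2\phi)+\bar c^\gep(x)|D\phi|\le -\tilde C<0$ on a cylinder $B(x_0,r]\times[t,t+h]$ with $|D\phi|\ne 0$ on $\{\phi=0\}$, let $d(\cdot,s)$ be the signed distance to $\{\phi(\cdot,s)=0\}$ with the sign of $\phi$, and set
\begin{equation*}
v^\gep(x,s)=q^{\gep,\delta}\!\Big(\tfrac{d(x,s)-2\beta\gep}{\gep},x\Big)-2\beta\gep,\qquad \delta\in(0,\bar\delta].
\end{equation*}
Using $|Dd|=1$ in a tubular neighborhood $Q_\gamma$ and $\Delta d=\mathcal{K}_d$ (the sum of principal curvatures of the level set through $(x,s)$), a direct computation using the traveling wave equation gives, on $Q_\gamma$,
\begin{equation*}
v^\gep_t-\Delta v^\gep+\frac{f^\gep(v^\gep,x)}{\gep^2}
=\frac{q^{\gep,\delta}_r}{\gep}\Big(d_t+\frac{c^{\gep,\delta}(x)}{\gep}-\Delta d\Big)-\delta\,\frac{q^{\gep,\delta}_r}{\gep}+R^\gep,
\end{equation*}
where the remainder $R^\gep$ collects $-2Dq^{\gep,\delta}_r\cdot Dd-\Delta_x q^{\gep,\delta}$ and the linearization error from shifting $q^{\gep,\delta}$ by $-2\beta\gep$. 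By (\ref{travelling wave propertiesb}) these are bounded by $O(\gep^{1-k})$ and $O(\gep^{2-2k})$, hence $o(\gep^{-1})$. Using $\Delta d=\mathcal{K}$ on $\{d=0\}$ and the translation of the normal-velocity inequality
$$d_t+\bar c(x)-\Delta d\le -\frac{\tilde C}{4|D\phi|}\quad\text{on }Q_\gamma$$
(derived from the strict subsolution property of $\phi$ for (\ref{PCfront2}) exactly as in (\ref{equation distance})), together with $c^{\gep,\delta}/\gep=c^\gep/\gep+O(\delta)\le \bar c+O(\delta)$ from (\ref{c epsilon estimate 2}) and (\ref{eqestdeltab}), the leading-order parenthesis is bounded above by $-\tilde C/(8\|D\phi\|_\infty)$ for $\delta$ small and then $\gep$ small. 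Choosing $\beta\ll 1$ independently of $\delta,\gep$ kills the $-2\beta f^\gep_q$ term against the $q^{\gep,\delta}_r$ lower bound (\ref{travelling wave properties b}) in the inner region, and away from the interface the exponential decay (\ref{travelling wave properties a}) plus the strict negativity $\sup_x f^\gep(1-\beta,x)<0$ closes the estimate; this is the mean-curvature analog of the argument in Lemma \ref{lemma propagation}.

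Once this $v^\gep$ is shown to be a strict subsolution of (\ref{asymptotic-probl-two}-iii) in $Q_\gamma$, the globalization to a barrier $\omegaeb$ on $B(x_0,r]\times[t,t+h]$ satisfying the ordering analog of the three conditions of Lemma \ref{lemma propagation} with the thresholds $(1-\beta\gep)$ and $(-1+\beta\gep)$ (matching the definition (\ref{new asymp}) which measures $(u^\gep\mp1)/\gep$) is routine: cut off from below by $-1$, then interpolate smoothly with the constant $1-\beta\gep$, checking as in step 3 of Lemma \ref{initialization second lemma} that the interpolation does not destroy the subsolution property thanks to $f^\gep_{qq}(1,x)>0$. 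Comparison with $u^\gep$ on the parabolic boundary of the cylinder (the lateral part using the same argument as in Proposition \ref{prop propagation}, the bottom using (iii) of Definition \ref{generalized flow} applied to $\Omega^1_t$) followed by letting $\beta\downarrow 0$ yields the propagation. The second step for $\Omega^2$ is symmetric. The conclusion is then a direct application of Corollary \ref{Cor levelset-flow}, which requires only the comparison principle for (\ref{PCfront2}) that we have assumed.
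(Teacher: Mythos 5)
Your overall architecture is the same as the paper's (one-sided continuous approximations of the velocity, generation and propagation barriers built from $q^{\gep,\delta}$ with $f^{\gep,\delta}=f^\gep+\gep\delta$, conclusion via Corollary \ref{Cor levelset-flow}), but the initialization step is \emph{not} ``essentially no change'', and this is a genuine gap. Because $\Omega^1,\Omega^2$ in (\ref{new asymp}) are defined through $(u^\gep\mp1)/\gep$, the generation lemma must produce $u^\gep(\cdot,t_\gep)\geq 1-\beta\gep$ on $\{d(\cdot,0)\geq\beta\}$, i.e.\ an $O(\gep)$-neighborhood of the stable equilibrium; a bound of the form $u^\gep\geq1-\beta$, which is all the unmodified argument with the barrier $\chi(t/\gep^2,\xi;x)$ run for a time $t_\gep\sim\tau\gep^2$ gives, says nothing about membership in $\Omega^1_0$. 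Since solutions of $\dot\chi=-f^\gep(\chi,x)$ approach $1$ only exponentially, the fast time must be taken of order $|\lg\gep|$, i.e.\ $t_\gep=\tau_o\gep^2|\lg\gep|$ as in Lemma \ref{initialization first lemma2}, and over that horizon the derivative estimates on $\chi$ degenerate by powers of $\gep$. This is precisely why the paper, following Chen, replaces $f^\gep$ by the modified nonlinearity $\tilde f^\gep$ (equal to $(2\sigma-q)/|\lg\gep|$ and $x$-independent near the unstable zero, with $f^\gep\leq\tilde f^\gep$), which yields (\ref{property two tilde chi})--(\ref{property three tilde chi}) with the $1/\gep$ losses, and uses the drift $\psi(d(x,0))-Kt/\gep$ rather than $-Kt$. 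None of this appears in your sketch, and without it the first step does not initialize the front at the $O(\gep)$ precision that (\ref{new asymp}) requires.

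In the propagation step your ansatz $v^\gep=q^{\gep,\delta}\big((d-2\beta\gep)/\gep,x\big)-2\beta\gep$ has the wrong \emph{inner} shift. With $d-2\beta\gep$ the profile has already transitioned on all of $\{0<d<\beta\}$, so at the bottom of the cylinder $v^\gep\approx 1-O(\gep)$ there, and the barrier is not below $(1-\beta\gep)\mathds1_{\{d\geq\beta\}}-\mathds1_{\{d<\beta\}}$; since all we know about $u^\gep$ on $\{d<\beta\}$ is $u^\gep\geq-1$, the comparison on the parabolic boundary collapses. The inner shift must be of order one, $d-2\beta$, and only the vertical shift is $O(\gep)$, exactly as in Lemma \ref{initialization second lemma2}. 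Two further slips worth fixing: the spatially uniform negative term produced by $f^{\gep,\delta}=f^\gep+\gep\delta$ is $-\delta/\gep$, not $-\delta\, q^{\gep,\delta}_r/\gep$; you need it independent of $q^{\gep,\delta}_r$ because it is what absorbs the $O(\gep^{-k})+O(\gep^{1-2k})$ remainders in the region where $q^{\gep,\delta}_r$ is exponentially small. And the one-sided continuous velocities must be rebuilt from $c^\gep/\gep$ (the paper's $\hat c^\gep,\check c^\gep$): the Section 3 families $\overline c^\gep,\underline c^\gep$ are built from $c^\gep$ itself, which under (\ref{c epsilon estimate 2}) tends to zero, so the inequality $\alpha^*\leq\overline c^\gep$ you invoke there is false — although your later use of $c^{\gep,\delta}/\gep$ together with (\ref{eqestdeltab}) suggests you intended the rescaled family.
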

\begin{rem} Comparison principle and uniqueness for equation (\ref{PCfront2}) is not fully known at the moment, as far as we know. We proved in \cite{dzs3} that a comparison principle holds when (\ref{PCfront2}) is considered in a bounded domain with a prescribed Dirichlet boundary condition.
\end{rem}

\begin{proof} The proof follows the same steps as the one of Theorem \ref{thm}, so we just point out the main changes. Consider two families of open sets of $\RRn$ $(\Omega^1_t)_{t\in[0,T)}$ and $(\Omega^2_t)_{t\in[0,T)}$  defined as in (\ref{omega1-2t}), (\ref{omega0}) with $\Omega^1$ and $\Omega^2$ as in (\ref{new asymp}). By the maximum principle $-1\leq u^\gep\leq1$.

\emph{First step: initialization.} We want to show that $\Omega_0^+=\{d_o>0\}\subseteq\Omega_0^1$ and $\Omega_0^-=\{d_o<0\}\subseteq\Omega_0^2$. For the first inclusion we consider $\hat x\in\{x:d_o(x)>0\}$ and find $r,\sigma>0$ such that
\begin{equation*}
\begin{array}{lll}
g(x)&\geq5\sigma&\mbox{ for all }x\in B(\hat x, r)\\
&\geq c^\gep(x)+4\sigma&\mbox{for all }x\in B(\hat x,r),\,\gep\leq\gep_\sigma.
\end{array}
\end{equation*}
and
$$m_o^\gep(x)\in(0,\sigma],\quad\mbox{for all }x\in\RRn,\,\gep\leq\gep_\sigma.$$
 This means in particular that
\begin{equation}\label{lower estimate uepsilonb}
u^\varepsilon(x,0)=g(x)\geq5\sigma\mathds1_{B(\hat x,r)}(x)-\mathds1_{B(\hat x,r)^c}(x).
\end{equation}
We define the function $\Phi:\RRn\times[0,T]\to\RR$, and the signed distance function $d$ to to $\{x:\Phi(x,t)=0\}$  as in (\ref{Phi def}).

Now we state the analogous of Lemma \ref{initialization first lemma} and of Lemma \ref{initialization second lemma}
\begin{lem}\label{initialization first lemma2}Under the same assumptions of Theorem \ref{thm2} we have that for any $\beta>0$ there exist $\tau=\tau(\beta)>0$ and $\bar\varepsilon=\bar\varepsilon(\beta)$ such that, for all $0<\varepsilon\leq\bar\varepsilon$, we have
$$u^\varepsilon(x,t_\varepsilon)\geq(1-\beta\gep)\mathds1_{\{d(\cdot,0)\geq\beta\}}(x)-\mathds1_{\{d(\cdot,0)<\beta\}}(x),\quad x\in\RRn,$$
where $t_\varepsilon=\tau\varepsilon^2|\lg\gep|$ and $d(x,t)=\sqrt{(r^2-Ct)^+}-|x-\hat x|$.
\end{lem}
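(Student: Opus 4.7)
The plan is to mimic the construction in Lemma \ref{initialization first lemma}, i.e.\ to build a subsolution of (\ref{asymptotic-probl-two}-iii) out of the reaction ODE. Two features are new: the rescaled time is $\tau = t/\varepsilon^2$ rather than $t/\varepsilon$, matching the $\varepsilon^{-2}$ prefactor in (\ref{asymptotic-probl-two}-iii); and the target $1-\beta\varepsilon$ is order-$\varepsilon$ closer to the stable equilibrium $1$ than the $1-\beta$ target of Lemma \ref{initialization first lemma}. By the linear convergence rate $\gamma\leq f^\varepsilon_q(1,x)$ at the stable equilibrium, reaching within $\beta\varepsilon$ of $1$ requires $\tau$ of order $|\log\varepsilon|$ in the rescaled time, which accounts for the factor $|\log\varepsilon|$ in $t_\varepsilon$.

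I would first study the ODE $\dot\chi(\tau,\xi;x)+f^\varepsilon(\chi,x)=0$ with $\chi(0,\xi;x)=\xi$ and establish analogues of $(\chi 1)$--$(\chi 3)$. In our setting $\overline m=\underline m=0$ and $m_o^\varepsilon(x)\in(0,\sigma]$ locally for $\varepsilon$ small, so starting from $\xi\ge 2\sigma$ one obtains $\chi(\tau,\xi;x)\ge 1-\beta\varepsilon$ for $\tau\ge \tau(\beta)|\log\varepsilon|$, by the comparison $f^\varepsilon\le\overline f$ from (\ref{eqestf}) and the exponential rate $\gamma$ of convergence to the stable equilibrium. For the spatial derivative bounds, now unavoidably affected by the growing time scale, the key observation is that
\[
I(\tau):=\chi_\xi(\tau)\int_0^\tau\frac{ds}{\chi_\xi(s)}\quad\text{solves}\quad\dot I+f^\varepsilon_q(\chi,x)\,I=1;
\]
since $f^\varepsilon_q$ is bounded above and is $\ge\gamma$ past a finite transient, $I(\tau)$ stays uniformly bounded in $\tau$. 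Combining this with (\ref{fproperty2b}) and variation of parameters in the linear ODEs for $\chi_{x_i},\chi_{x_ix_j},\chi_{\xi\xi},\chi_{\xi x_i}$ gives, uniformly in $\tau\in[0,\tau(\beta)|\log\varepsilon|]$ and $\xi$ in a compact set, the estimates $|\chi_{x_i}|\le C\varepsilon^{1-k}$, $|\chi_{x_ix_j}|\le C\varepsilon^{1-2k}$, $|\chi_{\xi\xi}|\le C\chi_\xi$ and $|\chi_{\xi x_i}|\le C\varepsilon^{1-k}|\log\varepsilon|\chi_\xi$.

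Then I take the subsolution $\underline u^\varepsilon(x,t)=\chi(t/\varepsilon^2,\psi(d(x,0))-Kt,x)$, with $\psi:\R\to\R$ a smooth nondecreasing cutoff valued in $[-1,2\sigma]$ with $\psi'$ supported in $[0,\beta\wedge\sigma/2]$. Repeating the chain rule computation of (\ref{computation}) with $t/\varepsilon^2$ in place of $t/\varepsilon$ and no $\varepsilon$ in front of the Laplacian, the subsolution inequality reduces to
\[
-K\chi_\xi+C_1\chi_\xi+C_2\varepsilon^{1-2k}\le 0.
\]
The main obstacle is the standalone residual $C_2\varepsilon^{1-2k}$: because the Laplacian is not prefactored by $\varepsilon$ here, this term must be absorbed into $K\chi_\xi$, but $\chi_\xi$ at the terminal rescaled time is only $O(\varepsilon)$ (as $\chi_\xi$ decays exponentially at the same rate $\gamma$ as $1-\chi$). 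One is forced to let $K$ depend on $\varepsilon$, e.g.\ $K=K_0\varepsilon^{-2k}$; this remains consistent with the construction because $Kt_\varepsilon=K_0\tau\varepsilon^{2-2k}|\log\varepsilon|\to 0$ exactly when $k<1$, so $\xi=\psi(d(x,0))-Kt$ stays bounded above $m_o^\varepsilon$ uniformly in $t\in[0,t_\varepsilon]$.

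Once the subsolution inequality is verified, the initial comparison $\underline u^\varepsilon(x,0)=\psi(d(x,0))\le g(x)$ follows from (\ref{lower estimate uepsilonb}) and the choice of $\psi$. The parabolic maximum principle then gives $\underline u^\varepsilon\le u^\varepsilon$ in $\R^n\times[0,t_\varepsilon]$, and evaluating at $t=t_\varepsilon$ for $x$ with $d(x,0)\ge\beta$ together with $(\chi 2)$ yields $u^\varepsilon(x,t_\varepsilon)\ge 1-\beta\varepsilon$; the complementary case $d(x,0)<\beta$ is trivial since $u^\varepsilon\ge -1$ by the maximum principle.
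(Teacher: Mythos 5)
Your construction follows the same architecture as the paper (the ODE flow $\chi$, the ansatz $\underline u^\gep(x,t)=\chi(t/\gep^2,\psi(d(x,0))-Kt,x)$, comparison, evaluation at $t_\gep$), but it has a genuine gap: you work with the \emph{unmodified} nonlinearity $f^\gep$, and over the long rescaled time $\tau\sim|\log\gep|$ the derivative estimates you claim are not valid uniformly in $\xi$. Since $\psi$ sweeps the whole interval $[-1,2\sigma]$, the estimates are needed for $\xi$ near the unstable zero $m_o^\gep(x)$ as well; there $f^\gep_q<0$, a trajectory starting at $m_o^\gep(x)$ never leaves it, so your quantity $I(\tau)$ is \emph{not} bounded (the ``finite transient'' claim fails), $\chi_\xi$ grows like $e^{\mu\tau}\sim\gep^{-\mu\tau_o}$, and, because the middle zero moves with $x$, $f^\gep_{x_i}(m_o^\gep(x),x)\neq0$, so even the bound $|\chi_{x_i}|\le C\gep^{1-k}$ breaks down. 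This is precisely why the paper, following Chen, replaces $f^\gep$ by $\tilde f^\gep$: it has slope of order $1/|\log\gep|$ near its middle zero $2\sigma$ (so exponential growth factors over times $\tau\le a|\log\gep|$ stay bounded), it is independent of $x$ for $q\in[\sigma,3\sigma]$ (killing the spatial source terms near the unstable zero), and it satisfies $f^\gep\le\tilde f^\gep$, so the extra term $\big(f^\gep(\chi,x)-\tilde f^\gep(\chi,x)\big)/\gep^2$ appearing in the subsolution computation is nonpositive. Without this modification, properties of type $(\tilde\chi 2)$--$(\tilde\chi 3)$ are unproved in the range of $(\tau,\xi)$ your argument actually uses.

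Second, your remedy for the standalone Laplacian residual $C\gep^{1-2k}$, namely taking $K=K_0\gep^{-2k}$, does not close the argument. Absorbing a term that is not proportional to $\chi_\xi$ into $-K\chi_\xi$ requires a uniform lower bound of the type $\chi_\xi\ge c\gep$ on the whole relevant range of $(\tau,\xi,x)$, and this is false: $\chi_\xi=\exp\big(-\int_0^\tau f^\gep_q(\chi(s),x)\,ds\big)$ decays at rate $f^\gep_q(\pm1,x)$, which is only bounded \emph{below} by $\gamma$ but can be much larger, while $\tau_o$ must be calibrated on the slowest rate so that the analogue of $(\chi 2)$ holds for every $x$; hence at $\tau=\tau_o|\log\gep|$, e.g.\ for $\xi$ near $-1$ (which occurs for $x\notin B(\hat x,r)$), $\chi_\xi$ is of order $\gep^{c/\gamma}\ll\gep$ and $K\chi_\xi$ cannot dominate $C\gep^{1-2k}$. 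The paper avoids this issue altogether by proving the sharper Chen-type estimate $|\Delta_x\chi|\le M\gep^{1-2k}\chi_\xi$ (all second derivatives controlled by $\chi_\xi$, property $(\tilde\chi 3)$), after which a constant $K$ suffices (with the drift written as $-Kt/\gep$ in the $\xi$-slot). So the missing idea is exactly the $|\log\gep|$-flattened, locally $x$-independent modification $\tilde f^\gep\ge f^\gep$; your time scale $t_\gep=\tau\gep^2|\log\gep|$ and the target $1-\beta\gep$ are otherwise correctly identified.
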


\begin{lem}\label{initialization second lemma2}There exist $\bar h=\bar h(r,\hat x)$, $\bar\beta=\bar\beta(r,\hat x)>0$ independent of $\gep$ such that if $\beta\leq\bar\beta$ and $\varepsilon\leq\bar\varepsilon(\beta)$, then there exists a subsolution $\omega^{\varepsilon,\beta}$ of (\ref{asymptotic-probl-two}-i) in $\RRn\times(0,\bar h)$ that satisfies
$$\omega^{\varepsilon,\beta}(x,0)\leq(1-\beta\gep)\mathds1_{\{d(\cdot,0)\geq\beta\}}(x)-\mathds1_{\{d(\cdot,0)<\beta\}}(x),\quad x\in\RRn.$$
If  moreover $(x,t)\in B(\hat x,r)\times(0,\bar h)$ and $d(x,t)>3\beta$, then $$\liminfstard\frac{\omega^{\varepsilon,\beta}(x,t)-1}{\gep}\geq-2\beta.$$
\end{lem}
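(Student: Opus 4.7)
My plan is to follow the proof of Lemma \ref{initialization second lemma} closely, with modifications dictated by the new scaling of (\ref{asymptotic-probl-two}) and by the modified perturbation $f^{\varepsilon,\delta}=f^\varepsilon+\varepsilon\delta$ (rather than $f^\varepsilon+\delta$). The structural changes are: the ansatz now carries a correction $-2\beta\varepsilon$ instead of $-2\beta$; the curvature term $-\Delta d$ enters the subsolution inequality unweighted (there is no $\varepsilon$ in front of the Laplacian anymore) and must be beaten together with $c^{\varepsilon,\delta}/\varepsilon\to\alpha$; and the spatial-derivative bounds (\ref{travelling wave propertiesb}) on $q^{\varepsilon,\delta}$ are looser, so the associated error terms are only $o(\varepsilon^{-1})$ rather than genuinely small.

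First I would pick $C$ large enough, e.g.\ $C\geq 2r(\|\alpha\|_\infty+(n-1)/\gamma+1)$, to guarantee that on the annular set $Q_{\gamma,\bar h}=\{(x,t):|d(x,t)|\leq\gamma,\,|x-\hat x|\geq\gamma,\,0\leq t\leq\bar h\}$ (with $\bar h<r^2/C$ and $\gamma$ small enough that $d$ is smooth there) we have $d_t+c^{\varepsilon,\delta}/\varepsilon-\Delta d\leq-C_0<0$ uniformly for $\delta\in[0,\bar\delta]$ and $\varepsilon$ small. Then I would define the local candidate
$$v^\varepsilon(x,t)=q^{\varepsilon,\delta}\!\left(\frac{d(x,t)-2\beta}{\varepsilon},x\right)-2\beta\varepsilon$$
and check it is a subsolution of (\ref{asymptotic-probl-two}-i) on $Q_{\gamma,\bar h}$. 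Combining the chain rule, the modified traveling-wave equation $q^{\varepsilon,\delta}_{rr}+c^{\varepsilon,\delta}q^{\varepsilon,\delta}_r=f^\varepsilon(q^{\varepsilon,\delta},x)+\varepsilon\delta$, and a Taylor expansion of $f^\varepsilon$ in its first argument yields
$$v^\varepsilon_t-\Delta v^\varepsilon+\frac{f^\varepsilon(v^\varepsilon,x)}{\varepsilon^2}\leq\frac{q^{\varepsilon,\delta}_r}{\varepsilon}\Bigl(d_t+\frac{c^{\varepsilon,\delta}}{\varepsilon}-\Delta d\Bigr)-\frac{\delta}{\varepsilon}-\frac{2\,D_xq^{\varepsilon,\delta}_r\cdot Dd}{\varepsilon}-\Delta_xq^{\varepsilon,\delta}-\frac{2\beta f^\varepsilon_q(q^{\varepsilon,\delta},x)}{\varepsilon}+O(\beta^2).$$
By the choice of $C$ the principal term is $\leq-q^{\varepsilon,\delta}_rC_0/\varepsilon$; the spatial error terms, of respective sizes $\varepsilon^{-k}$ and $\varepsilon^{1-2k}$ from (\ref{travelling wave propertiesb}), are $o(\varepsilon^{-1})$ since $k<1$ and are absorbed into the reserve $-\delta/\varepsilon$ for $\delta>0$ fixed and $\varepsilon$ small. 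The remaining $q^{\varepsilon,\delta}_r$-weighted contributions are controlled by the same two-regime split as in Lemma \ref{initialization second lemma}: where $|r|\geq\bar r$, $f^\varepsilon_q\geq\gamma/2$ provides a negative contribution; where $|r|\leq\bar r$, $q^{\varepsilon,\delta}_r\geq K>0$ so that the leading term dominates provided $\beta$ is small.

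Next I would truncate $\bar v^\varepsilon=\max(v^\varepsilon,-1)$ on $\{|d|\leq\gamma\}$, extend it by $-1$ on $\{d\leq-\gamma\}$, and glue with the constant value $1-\beta\varepsilon$ outside through a smooth cutoff $\psi\circ d$:
$$\omega^{\varepsilon,\beta}=\psi(d)\bar v^\varepsilon+(1-\psi(d))(1-\beta\varepsilon).$$
The subsolution check on the transition strip $\gamma/2\leq d\leq 3\gamma/4$ uses local convexity of $f^\varepsilon$ near $q=1$ (from $f^\varepsilon_{qq}(1,x)>0$), the sign $d_t\leq0$, and the fact that $q^{\varepsilon,\delta}_r$ is exponentially small there, so the bad $q^{\varepsilon,\delta}_r/\varepsilon$ term generated by $\psi'$ becomes negligible; the dominant negative contribution $(1-\psi)f^\varepsilon(1-\beta\varepsilon,x)/\varepsilon^2\leq-(1-\psi)\beta\gamma/\varepsilon$ together with $-\psi\delta/\varepsilon$ beats the bounded remainder. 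The initial and terminal estimates then follow from (\ref{travelling wave properties a}) and (\ref{eqestdeltab}): $v^\varepsilon(x,0)\leq-1$ when $d(x,0)<\beta$, and $v^\varepsilon(x,t)\geq 1-M\delta\varepsilon-2\beta\varepsilon-ae^{-b\beta/\varepsilon}$ when $d(x,t)>3\beta$; choosing $\delta\leq\beta/M$ then gives the required $\liminfstard(\omega^{\varepsilon,\beta}(x,t)-1)/\varepsilon\geq-2\beta$.

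The main obstacle is the interplay between the second spatial derivative $\Delta_xq^{\varepsilon,\delta}=O(\varepsilon^{1-2k})$ and the principal term of order $\varepsilon^{-1}$: for $k$ close to $1$, the elliptic-type argument used in Lemma \ref{initialization second lemma} (where the $\varepsilon\Delta$ in that equation killed the analogous error) is no longer available, and what saves the estimate is precisely the reserve $-\delta/\varepsilon$ coming from the perturbation of the nonlinearity, which dominates any $O(\varepsilon^{-q})$ with $q<1$ independently of the traveling wave's spatial regularity.
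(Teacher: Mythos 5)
Your construction is essentially the paper's own proof (which the paper itself only sketches by reference to Lemma \ref{initialization second lemma}): the same ansatz $v^\varepsilon=q^{\varepsilon,\delta}\big(\frac{d-2\beta}{\varepsilon},x\big)-2\beta\varepsilon$ with $f^{\varepsilon,\delta}=f^\varepsilon+\varepsilon\delta$, the same choice of a large constant $C$ making $d_t+\frac{c^{\varepsilon,\delta}}{\varepsilon}-\Delta d$ strictly negative on $Q_{\gamma,\bar h}$ (the paper takes $C\geq 2r[\frac{n-1}{\gamma}+4]$; note that the pointwise bound on $\frac{c^{\varepsilon,\delta}}{\varepsilon}$ is $n_2+M\delta\leq 2(1-\rho)+M\delta$, so your margin should involve $\sup n_2$ or $2(1-\rho)$ rather than $\norm{\alpha}_\infty$, a cosmetic change), the reserve $-\delta/\varepsilon$ absorbing the $O(\varepsilon^{-k})$ and $O(\varepsilon^{1-2k})$ spatial-derivative errors, and the same truncation and gluing with the constant $1-\beta\varepsilon$. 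One small slip at the end: with $\delta\leq\beta/M$ fixed, your estimate yields $\liminfstard\frac{\omega^{\varepsilon,\beta}-1}{\varepsilon}\geq-2\beta-M\delta\geq-3\beta$, not the stated $-2\beta$; either take $\delta=\delta(\varepsilon)\to0$ (e.g. $\delta=\varepsilon^{(1-k)/2}$, which still dominates the error terms since $k<1$) to recover $-2\beta$ exactly, or observe that the precise constant is immaterial because $\beta\to0$ in the subsequent argument, exactly as in the first scaling where the analogous conclusion is $1-3\beta$.
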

\begin{proof}[Proof of Lemma $\ref{initialization first lemma2}$] Let $\beta>0$ fixed. From now on we restrict $\gep$ to $\gep\leq\gep_\sigma$. To prove our thesis we have to modify the function $f^\gep$ as in \cite{xce,bdl}. Let $\bar f\in C^2(\R\times\R^n)$ be a function as in (\ref{eqestf}) with $m_1=2\sigma$. Consider a smooth cut-off $\rho\in C^{\infty}_0(\R)$ such that $0\leq\rho\leq1$, $\rho(s)=1$ if $|s|\leq1$ and $\rho(s)=0$ if $|s|\geq2$. Assume moreover that $\rho$ satisfies $-2\leq s\rho'(s)\leq0$ and $|\rho''(s)|\leq4$ for all $s\in\R$. Now define two further smooth functions $\rho_1,\rho_2:\R\to[0,1]$ as
$$
\rho_1(q)=\rho\Big(\frac{q-2\sigma}{\sigma}\Big)\qquad\rho_2(q)=\rho\Big(\frac{4(q-2\sigma)}{\sigma}\Big)
$$
and set
$$\bar f^\gep(q,x)=(1-\rho_1(q))f^\gep(q,x)+\rho_1(q)\bar f(q)$$
and
$$\tilde f^\gep(q,x)=(1-\rho_2(q))\bar f^\gep(q,x)+\rho_2(q)\frac{2\sigma-q}{|\lg\gep|}.$$
Notice that for any $x\in\RRn$, $\tilde f^\gep(\cdot,x)$ has $\{-1,2\sigma,1\}$ as zeros and satisfies properties similar to $f^\gep$. Moreover $\tilde f^\gep$ does not depend on $x$ for all $q\in[\sigma,3\sigma]$ and $f^\gep\leq\min\{\bar f^\gep,\tilde f^\gep\}$.

1. As in Chen \cite{xce}, if we denote by $\chi=\chi(\tau,\xi;x)\in C^2([0,+\infty)\times\RR\times\RRn)$ the solution of
\begin{equation}\label{ordinary equation chi tilde}
\left\{
\begin{array}{l}
\dot\chi(\tau,\xi;x)+\tilde f^\varepsilon(\chi(\tau,\xi;x),x)=0,\quad \tau>0,\\
\chi(0,\xi;x)=\xi,
\end{array}
\right.
\end{equation}
it follows that $\chi$ satisfies property (\ref{property one chi}) in the proof of Lemma \ref{initialization first lemma} while properties (\ref{property two chi}) and (\ref{property three chi}) are replaced by the following: for all $\beta,\;\sigma>0$
there exist $\tau_o=\tau_o(\beta,\sigma),\gep_o=\gep_o(\beta,\sigma)>0$ such that, for all $\tau\geq\tau_o|\log\gep|$ and $\gep\leq\gep_o$
\begin{equation}\tag{$\tilde\chi2$}\label{property two tilde chi}
\begin{array}{l}
\chi(\tau,\xi;x)\geq1-\beta\gep\,\quad\forall\,\xi\geq 4\sigma.
\end{array}
\end{equation}
Moreover, since for any $C>1$ we have that $\chi(\tau,\xi,x)\in[-C,C]$ for all $\xi\in[-C,C]$, $\tau\geq0$, $x\in\RRn$, it also holds that
for any $C>1$, $a>0$ there exists a constant $M_{C,a}>0$ such that
\begin{equation}\tag{$\tilde\chi3$}\label{property three tilde chi}
\begin{array}{l}
|\chi_{\xi\xi}(\tau,\xi;x)|\leq \frac{M_{C,a}}{\gep}\chi_\xi(\tau,\xi;x),\quad
|\chi_{x_i}(\tau,\xi;x)|,\leq \frac{M_{C,a}}{\gep^{k-1}}\\
|\chi_{\xi x_i}(\tau,\xi;x)|\leq \frac{M_{C,a}}{\gep^{k-1}}\chi_\xi(\tau,\xi;x),\quad
|\chi_{x_ix_i}(\tau,\xi;x)|\leq \frac{M_{C,a}}{\gep^{2k-1}}\chi_\xi(\tau,\xi;x),
\end{array}
\end{equation}
for any $\tau\leq a|\ln\gep|$, $\xi\in[-C,C],\,x\in\RRn,\,i\in\{1,2,\cdots,n\}$ and $\gep$ small enough.

2. Consider a smooth nondecreasing function $\psi$  such that $\psi(z)=-1$ if $z\leq0$ and $\psi(z)=5\sigma$ if $z\geq\beta\wedge\frac{\sigma}{2}$. Similarly as before, the function
$$\underline u^\gep(x,t)=\chi\Big(\frac{t}{\gep^2},\psi(d(x,0))-\frac{Kt}{\gep},x\Big)$$
satisfies $\underline u^\gep(x,0)\leq u^\gep(x,0)$. Moreover it is a subsolution of (\ref{asymptotic-probl-two}-i) in $\RRn\times(0,\tau_o\gep^2|\lg\gep|)$.
Indeed we can compute by (\ref{property three tilde chi}),
\begin{equation*}\begin{array}{ll}
\underline u_t^\varepsilon-\Delta\underline u^\varepsilon+\frac{f^\varepsilon(\underline u^\varepsilon,x)}{\varepsilon^2}&= \frac{\dot\chi+f^\gep(\chi,x)}{\gep^2}-K\frac{\chi_\xi}{\gep}-\chi_{\xi\xi}(\psi')^2-\chi_\xi(\psi''+\psi'\Delta d)\\
&\quad+2\psi'D\chi_\xi\cdot Dd+\Delta\chi\\
&=\frac{f^\gep(\chi,x)-\tilde f^\gep(\chi,x)}{\gep^2}+\frac{\chi_\xi}{\gep}[-K-\gep(\psi''+\psi'\Delta d)+\\
&\quad +M_{2,\tau_0}((\psi')^2+\gep^{2-k}\psi')+\gep^{2-2k}]\\
&\leq-\frac{\chi_\xi}{\gep}\big(K-M_{2,\tau_0}\norm{\psi'}_\infty^2+o_\gep(1)\big)\leq0,
\end{array}\end{equation*}
for $K$ large enough. Therefore using the maximum principle and property (\ref{property two tilde chi}) we can prove that $u^\gep(x,t_\gep)\geq1-\beta\gep$ if $t_\gep=\tau_o\gep^2|\lg\gep|$ and $d(x,0)\geq\beta$ (from which Lemma \ref{initialization first lemma2} follows).
\end{proof}

\begin{proof}[Proof of Lemma $\ref{initialization second lemma2}$] The construction of a subsolution that satisfies this Lemma is very similar to the one in Lemma \ref{initialization second lemma}. Let $\Phi$, $d$ and $Q_{\gamma,\bar h}$ defined as in (\ref{Phi def}) where now the fixed constant $C$ satisfies
\begin{equation*}
C\geq 2r\Big[\frac{n-1}\gamma+4\Big].
\end{equation*}
The construction of our subsolution $\omega^{\gep,\beta}$ follows the usual steps. We first define for any $(x,t)\in Q_{\gamma,\bar h}$  $$v^{\varepsilon}(x,t)=q^{\varepsilon,\delta}\Big(\frac{d(x,t)-2\beta}{\varepsilon},x\Big)-2\beta\gep,$$
where $q^{\gep,\delta}$ is the solution of the travelling wave equation (\ref{travelling-wave eq}) with $f^\gep$ replaced by $f^{\gep,\delta}=f^\gep+\gep\delta$. The function $v^\gep$ is a subsolution of (\ref{asymptotic-probl-two}-i) in $Q_{\gamma,\bar h}$. Indeed,
\begin{equation*}
\begin{array}{ll}
v_t^{\varepsilon}-\Delta v^{\varepsilon}+\frac{f^\varepsilon(v^\varepsilon,x)}{\varepsilon^2}&=
\frac{\qued_r d_t}{\varepsilon}-\frac{\qued_{rr}}{\varepsilon}-\frac{2}{\gep}D\qued_r\cdot Dd-\frac{\qued_r}{\gep}\Delta d-\Delta\qued+\frac{f^\varepsilon(\qued-2\beta,x)}{\varepsilon^2}\\
&\quad-\frac{2\beta}{\gep} f^\gep_q(\qued,x)+2\beta^2\gep\norm{f_{qq}^\gep}_\infty\\
&\leq\frac{1}{\gep}\Big[-\qued_r-2\beta f^\gep_q(\qued,x)+2\beta^2\gep\norm{f_{qq}^\gep}_\infty\Big]+\left[-\frac{\delta}{\gep}+2\frac{M_1}{\gep^k}+\frac{M_2}{\gep^{2k-1}}\right],
\end{array}
\end{equation*}
and then we conclude as before. The extension of $v^\gep$ to a subsolution in the entire strip $\RRn\times[0,\bar h]$ proceed now similarly to the one in Lemma \ref{initialization second lemma}.  We first prove that the function $\bar v^\gep:\{(x,t)\in\RRn\times[0,\bar h]:d(x,t)\leq\gamma\}\to\RR$, defined as
$$\bar v^\varepsilon(x,t)=
\left\{\begin{array}{lll}
\sup(v^\varepsilon(x,t),-1)&\mbox{if}&-\gamma<d(x,t)\leq\gamma,\\
-1&\mbox{if}&d(x,t)\leq-\gamma,
\end{array}
\right.$$
is a subsolution of (\ref{asymptotic-probl-two}-i). Eventually we define our subsolution $\omega^{\gep,\beta}$ as
$$
\omega^{\varepsilon,\beta}(x,t)=
\left\{\begin{array}{ll}
\psi(d(x,t))\bar v^{\varepsilon}(x,t)+(1-\psi(d(x,t)))(1-\gep\beta)&\mbox{if }d(x,t)<\gamma,\\
1-\gep\beta&\mbox{if }d(x,t)\geq\gamma.
\end{array}
\right.
$$
for $(x,t)\in\RRn\times[0,\bar h]$. These proofs do not contain any new ideas with respect to the ones in Lemma \ref{initialization second lemma} and we omit them. 
\end{proof}

\emph{Second step: propagation.} The proof of the fact that $\omegaone$ and $((\Omega^2_t)^c)_{t\in(0,T)}$ are respectively super and subflows with normal velocity ${\cal K}-\alpha$, where $\cal K$ is the mean curvature of the level set, is very close to the one in Theorem \ref{thm}. Here we approximate our discontinuous limit velocity $\alpha$ with the following continuous functions:
$$\hat c{^\gep}(x):=\eta^\gep(x)n_2(x)+(1-\eta^\gep(x))\frac{c^\gep(x)}{\gep},\quad
\check c{^\gep}(x):=\xi^\gep(x)n_1(x)+(1-\xi^\gep(x))\frac{c^\gep(x)}{\gep},$$
with $\eta^\gep$ and $\xi^\gep$ as in Theorem \ref{thm}. If we put ${\hat{\mathcal F}}=\{\hat c^\gep,\,\varepsilon>0\}$, ${\check{\mathcal F}}=\{\check c^\gep,\,\varepsilon>0\}$, then Proposition \ref{equivalent def of generalized flow} takes the following form.
\begin{prop}\label{equivalent def of generalized flow2}
\begin{description}
\item[(i)] A family $\open$ of open subsets of $\RRn$ such that the set $\Omega:=\bigcup_{t\in(0,T)}\Omega_t\times\{t\}$ is open in $\RRn\times[0,T]$ is a \emph{generalized superflow} with normal velocity $-F-\alpha$ if and only if
it is a generalized superflow with normal velocity $-F- \hat c\in C(\R^n)$, for all $\hat c\in\hat{{\mathcal F}}$;
\item[(ii)] A family $\close$ of close subsets of $\RRn$ such that the set $\mathcal{F}:=\bigcup_{t\in(0,T)}\mathcal{F}_t\times\{t\}$ is closed in $\RRn\times[0,T]$ is a \emph{generalized subflow} with normal velocity $-F-\alpha$ if and only if
it is a generalized subflow with normal velocity $-F-\check c$, for all $\check c\in{\check{\mathcal F}}$.
\end{description}
\end{prop}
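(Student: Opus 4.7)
The plan is to mimic the proof of Proposition \ref{equivalent def of generalized flow} line by line, simply carrying along the additional mean curvature term $F(Du,D^2u)$, which plays no active role in the approximation of the discontinuous velocity. There are essentially two ingredients to check: (a) that the characterization of Theorem \ref{flow-viscositysol} still translates super- and sub-flows with normal velocity $-F-\alpha$ into Ishii-type viscosity super- and sub-solutions of the level-set equation (\ref{PCfront2}); and (b) that the one-sided approximations $\hat c^\gep$, $\check c^\gep$ give the claimed equivalence via a monotonicity plus stability argument.

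For (a), I would invoke the argument of Barles--Da Lio \cite{bdl} which applies verbatim here: it relies only on the geometric character of the equation (invariance under $u\mapsto\psi(u)$ for $\psi$ smooth and strictly increasing) and on a comparison principle for (\ref{PCfront2}), which is the standing assumption in Theorem \ref{thm2}. The singularity of $F$ at $p=0$ is dealt with in the standard Ishii--Souganidis way and does not interact with $\alpha$. Consequently, the family $\open$ is a generalized superflow with velocity $-F-\alpha$ if and only if $\chi=\mathds 1_\Omega-\mathds 1_{\Omega^c}$ is a viscosity supersolution of
$$u_t+F(Du,D^2u)+\alpha^*(x)|Du|=0\quad\text{in }\R^n\times(0,T),$$
and analogously for $\close$ with $\alpha_*$ replacing $\alpha^*$ and $-F-\alpha$ replaced by its sub-version. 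The proposition is then equivalent to the same statement with $\alpha^*$ replaced by $\hat c^\gep$ (resp.~$\alpha_*$ by $\check c^\gep$) for every $\gep>0$ small.

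For (b) I would argue exactly as in the proof of Proposition \ref{equivalent def of generalized flow}. In the ``$\Rightarrow$'' direction (for the superflow case), the construction of $\hat c^\gep$ yields the pointwise monotonicity $\hat c^\gep(x)\geq\alpha^*(x)$ for every small $\gep$; hence at any local minimum $(x_0,t_0)$ of $\chi_*-\varphi$,
$$\varphi_t(x_0,t_0)+F(D\varphi,D^2\varphi)+\hat c^\gep(x_0)|D\varphi(x_0,t_0)|\geq\varphi_t+F+\alpha^*(x_0)|D\varphi|\geq0,$$
so $\chi$ is automatically a supersolution of the approximated equation. In the ``$\Leftarrow$'' direction one fixes a strict local minimum $(x_0,t_0)$ of $\chi_*-\varphi$, writes the supersolution inequality for each $\gep>0$ at that same point (which does not move with $\gep$ since $\chi$ is $\gep$-independent), and lets $\gep\to0^+$. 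Here one uses the stability of viscosity supersolutions jointly with the identity $\limsup^*_{\gep\to0^+}\hat c^\gep=\alpha^*$ to recover the Ishii supersolution inequality in the limit. The subflow direction is the symmetric argument with $\check c^\gep$, using $\liminf_{*\gep\to0^+}\check c^\gep=\alpha_*$.

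The main (and only mildly delicate) obstacle is the passage to the limit $\hat c^\gep\to\alpha^*$ in the second implication: one needs a brief case analysis according to the sign of $\tilde d(x_0)$, using the definition of the cutoffs $\eta^\gep$, $\xi^\gep$ together with the locally uniform convergence $c^\gep/\gep\to\alpha$ off $\tilde\Gamma$ guaranteed by (\ref{c epsilon estimate 2}), to verify that $\hat c^\gep(x_0)\to\alpha^*(x_0)$ pointwise (and symmetrically $\check c^\gep(x_0)\to\alpha_*(x_0)$). Once this is in hand, the conclusion is immediate and the proof is a transcription of that of Proposition \ref{equivalent def of generalized flow}, so I would simply state it and refer the reader back to that proof.
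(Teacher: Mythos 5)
Your proposal is correct and follows essentially the same route as the paper, which proves Proposition \ref{equivalent def of generalized flow2} exactly as Proposition \ref{equivalent def of generalized flow}: reduce to the viscosity super/subsolution characterization of Theorem \ref{flow-viscositysol} (extended to the curvature equation along the lines of \cite{bdl}), then use the monotonicity $\hat c^\gep\geq\alpha^*$ (resp.\ $\check c^\gep\leq\alpha_*$) for the trivial implication and stability together with $\limsupstar \hat c^\gep=\alpha^*$ (resp.\ $\liminfstar \check c^\gep=\alpha_*$) for the converse. Your case analysis on the sign of $\tilde d(x_0)$ to justify the one-sided limits is precisely the verification the paper leaves implicit.
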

The modifications that we need in this proof follow the lines of the previous Lemma.
\end{proof}

\end{document}